\documentclass[reqno,11pt]{amsart}

\usepackage{amsmath,amssymb,latexsym,soul,cite,mathrsfs,amsfonts}
\usepackage{color,enumitem,graphicx}
\usepackage[colorlinks=true,urlcolor=blue,
citecolor=red,linkcolor=blue,linktocpage,pdfpagelabels,
bookmarksnumbered,bookmarksopen]{hyperref}
\usepackage[english]{babel}
\usepackage{comment}
\setcounter{tocdepth}{2}

\usepackage[left=2.9cm,right=2.9cm,top=2.8cm,bottom=2.8cm]{geometry}
\usepackage[hyperpageref]{backref}

\usepackage[colorinlistoftodos]{todonotes}
\makeatletter
\providecommand\@dotsep{5}
\def\listtodoname{List of Todos}
\def\listoftodos{\@starttoc{tdo}\listtodoname}
\makeatother

\usepackage{verbatim} 

\numberwithin{equation}{section}


\newtheorem{theorem}{Theorem}[section]
\newtheorem{proposition}[theorem]{Proposition}
\newtheorem{lemma}[theorem]{Lemma}
\newtheorem{corollary}[theorem]{Corollary}
\newtheorem{remark}[theorem]{Remark}

\newcommand\restr[2]{{
  \left.\kern-\nulldelimiterspace 
  #1 
  \vphantom{\big|} 
  \right|_{#2} 
  }}




\title[Local minimizers for a class of functionals over the Nehari set]
{Local minimizers for a class of functionals over the Nehari set}


\author[H. Ramos Quoirin]{Humberto Ramos Quoirin}

\author[K. Silva]{ Kaye Silva}

\address{H. Ramos Quoirin  \newline \indent CIEM-FaMAF \newline \indent Universidad Nacional de C\'{o}rdoba, (5000)
	C\'{o}rdoba, Argentina}
\email{\tt humbertorq@gmail.com}

\address{K. Silva \newline\indent
	Instituto de Matem\'atica e Estat\'istica.   
	\newline\indent 
	Universidade Federal de Goi\'as,
	\newline\indent
Rua Samambaia, 74001-970, Goi\^ania, GO, Brazil}
\email{\href{mailto:kayesilva@ufg.br}{kayesilva@ufg.br}}

\thanks{Kaye Silva was partially supported by CNPq/Brazil under Grant 408604/2018-2.}



\subjclass[2010]{Primary  
35J50, 
35J91, 
35Q60, 
}
\keywords{Quasilinear pde, variational methods, Nehari manifold, indefinite problems}

\pretolerance10000

\begin{document}

\begin{abstract}
We analyze the topological structure of the Nehari set for a class of functionals depending on a real parameter $\lambda$, and having two degrees of homogeneity. A special attention is paid to the extremal parameter $\lambda^*$, which is the threshold value for the Nehari set to be given by a {\it natural constraint}. The main difficulty arises when $\lambda>\lambda^*$, as the energy functional may be unbounded from below over the Nehari set. In such situation we prove the existence of local minimizers of the functional constrained to this set. We unify and extend previous existence and multiplicity results for critical points of indefinite, $(p,q)$-Laplacian, and Kirchhoff type problems.
\end{abstract}

\bigskip

\maketitle
\begin{center}
\begin{minipage}{12cm}
\tableofcontents
\end{minipage}
\end{center}

\bigskip

\maketitle
\section{Introduction}
\bigskip
This article is devoted to the analysis of the Nehari set (usually known as Nehari manifold) associated to a  functional depending on a real parameter. We shall proceed with the investigation on the extremal parameter  carried out in \cite{AS,IS}, where a class of indefinite and superlinear type problems has been investigated. We aim at unifying and extending the results of \cite{AS,IS} by dealing with a general class of functionals having two degrees of homogeneity. More precisely, we consider the family 
\begin{equation}\label{go}
\Phi_\lambda=\frac{1}{p}\left(P_1-\lambda P_2\right)-\frac{1}{\gamma}F,
\end{equation}
where $P_1,P_2,F$ are $C^1$ functionals acting on a uniformly convex Banach space $X$, $\lambda$ is a real parameter, and  $p,\gamma > 1$ with $p\neq \gamma$. Furthermore the following basic conditions shall be assumed:\\

\begin{itemize}
	\item $P_1,P_2$ are $p$-homogeneous and $F$ is $\gamma$-homogeneous, i.e. $P_1(tu)=t^p P_1(u)$, $P_2(tu)=t^pP_2(u)$ and $F(tu)=t^\gamma F(u)$ for all $t>0$ and $u \in X$.
	\item $P_2(u)\ge 0$ for all $u\in X\setminus\{0\}$ and $F(0)=0$.
	\item There exists $C_1,C_2,C_3>0$ such that $P_1(u)\ge C_1\|u\|^p$, $P_2(u)\le C_2\|u\|^p$ and $F(u)\le C_3\|u\|^\gamma$ for all $u\in X$.
	\item There exists $u_1,u_2,u_3\in X\setminus\{0\}$ such that $F(u_1)>0>F(u_2)$ and $F(u_3)=0$.
	\item $P_1$ is weakly lower semi-continuous, $P_2$ is weakly continuous, and $F$ is weakly upper semi-continuous.\\
\end{itemize} 

This class of functionals appears as the energy functional in several elliptic problems, among which the main prototype is 
\begin{equation}\label{fo}
\Phi_\lambda(u)=\frac{1}{p}\int_\Omega \left( |\nabla u|^p-\lambda |u|^p\right)-\frac{1}{\gamma}\int_\Omega f(x)|u|^{\gamma},
\end{equation}
defined for $u \in X=W_0^{1,p}(\Omega)$. Here $\Omega$ is a bounded domain of $\mathbb{R}^N$, $N \ge 1$, $p>1$ and $\gamma \neq p$ with $1<\gamma<p^*$ (the critical Sobolev exponent), and $f \in L^{\infty}(\Omega)$. The Euler-Lagrange equation for this functional is
\begin{equation}
\label{eo} -\Delta_p u =\lambda |u|^{p-2}u +f(x)|u|^{\gamma-2}u, \quad u \in W_0^{1,p}(\Omega),
\end{equation}
which has been studied by several authors \cite{AT,B,BD,BZ,DP,I,IS,Ou}. Inspired by the approach used in \cite{IS}, where the functional \eqref{fo} is considered in the superhomogeneous (or superlinear, if $p=2$) case $\gamma>p$, we shall investigate the general class \eqref{go}. In particular, we shall complement \cite{IS} by including the subhomogeneous case $\gamma<p$.
We shall also consider some variations of \eqref{eo}, namely, the Neumann problem
\[
\left\{
\begin{array}
[c]{lll}%
-\Delta_p u =\lambda |u|^{p-2}u +f(x)|u|^{\gamma-2}u & \mathrm{in} & \Omega,\\
\partial_n u=0 & \mathrm{on} & \partial\Omega,
\end{array}
\right.  
\]
where $\partial_n u$ denotes the outer normal derivative of $u$,
as well as problems with nonlinear boundary conditions.

Furthermore, our results also apply to the $(p,q)$-Laplacian problem
 \begin{equation*}
-\Delta_p u -\Delta_q u = \alpha |u|^{p-2}u+\beta |u|^{q-2}u, \quad u \in W_0^{1,p}(\Omega),
\end{equation*}
where $1<q<p$ and $\alpha,\beta \in \mathbb{R}$, and to the Kirchhoff type equation
\begin{equation*}
-\left(a+b\int_{\Omega}|\nabla u|^2 dx\right)\Delta u= \lambda u+\mu |u|^2u, \quad u \in H_0^1(\Omega),
\end{equation*}
where $a,b>0$, and $\lambda,\mu \in \mathbb{R}$.

Our main purpose is to analyze the topological structure (with respect to $\lambda$) of the Nehari set associated to $\Phi_\lambda$, which is defined by
$$\mathcal{N}_\lambda:=\{u \in X \setminus \{0\}: \Phi_\lambda'(u)u=0\}.$$
It can also be written as
\begin{equation*}
\mathcal{N}_\lambda=\{u\in X\setminus\{0\}:\varphi_{\lambda,u}'(1)=0\}=\{tu\in X\setminus\{0\}:\varphi_{\lambda,u}'(t)=0\},
\end{equation*}
where
$\varphi_{\lambda,u}:[0,\infty) \to \mathbb{R}$ is the {\it fibering map} given by $\varphi_{\lambda,u}(t)=\Phi_\lambda(tu)$ for $t \ge 0$ and $u \in X$.
A basic issue related to the Nehari set is to know whether it is given by a natural constraint, i.e. if any critical point of the restriction of $\Phi_\lambda$ to $\mathcal{N}_\lambda$ is a critical point of $\Phi_\lambda$.  
To discuss this issue, let us recall the splitting
\begin{equation*}
\mathcal{N}_\lambda=\mathcal{N}_\lambda^+\cup \mathcal{N}_\lambda^0\cup \mathcal{N}_\lambda^-,
\end{equation*}
where 
\begin{equation*}
\mathcal{N}_\lambda^+=\{u\in \mathcal{N}_\lambda : \varphi_{\lambda,u}''(1)>0\}, \quad \mathcal{N}_\lambda^-=\{u\in \mathcal{N}_\lambda : \varphi_{\lambda,u}''(1)<0\},
\end{equation*}
and
\begin{equation*}
\mathcal{N}_\lambda^0=\{u\in \mathcal{N}_\lambda : \varphi_{\lambda,u}''(1)=0\}.
\end{equation*}
are mutually disjoint sets. By the implicit function theorem, it is promptly seen that 
(whenever non-empty) $\mathcal{N}_\lambda^+$ and $\mathcal{N}_\lambda^-$ are $C^1$ manifolds in $X$, and critical points of $\Phi_\lambda $ restricted to $\mathcal{N}_\lambda^+\cup \mathcal{N}_\lambda^-$ are critical points of $\Phi_\lambda$.

In view of these facts we may refer to $\mathcal{N}_\lambda^+$ and $\mathcal{N}_\lambda^-$ as Nehari manifolds, and we see that $\mathcal{N}_\lambda$ is given by a natural constraint if and only if $\mathcal{N}_\lambda^0=\emptyset$. Let us note that most of the applications of the Nehari manifold method in the litterature (in particular the abstract results in  \cite{Am,SW}) occur when $\mathcal{N}_\lambda^0=\emptyset$, which prevents the difficulty previously described. This is the case in  \cite{BDH,B,BZ}, which deal with the functional \eqref{fo}.  The situation where $\mathcal{N}_\lambda^0\neq\emptyset$ also brings other difficulties, which are related with the behavior of $\Phi_\lambda$ on $\mathcal{N}_\lambda^+$ and  $\mathcal{N}_\lambda^-$. As a matter of fact,  in some situations we shall see that $\Phi_\lambda$ is unbounded from below on $\mathcal{N}_\lambda^+$ and its infimum over   $\mathcal{N}_\lambda^-$ is not achieved (see Lemma \ref{unboundedenergy} and Remark \ref{unbogamma<p}), which obviously makes impossible to use a standard minimization technique in these sets. Instead, we shall see that a local minimization procedure can be carried out in $\mathcal{N}_\lambda^+$.

Let us decribe in the sequel our main results. First we observe that $\mathcal{N}_\lambda^0$ becomes nonempty as soon as $\lambda$ crosses the threshold value
\begin{equation*}
\lambda^*:=\inf\left\{\frac{P_1(u)}{P_2(u)}: u \in X \setminus \{0\}, F(u)=0 \right\}.
\end{equation*}

Since we intend to minimize $\Phi_\lambda$ over $\mathcal{N}_\lambda^+$ and $\mathcal{N}_\lambda^-$, let us fix the following notation:
\begin{equation*}
c_\lambda^{\pm}:=\inf_{\mathcal{N}_\lambda^{\pm}} \Phi_\lambda.
\end{equation*}
Under the condition\\
\begin{itemize}
	\item[(H1)] $\lambda^*= \inf\left\{\frac{P_1(u)}{P_2(u)}: F(u)\ge 0\right\}$
\end{itemize}
it turns out that $c_\lambda^+$ or $c_\lambda^-$ provide a critical point of $\Phi_\lambda$
for $\lambda<\lambda^*$. Furthermore, in case $\lambda^*$ is larger than 
\begin{equation*}
\mu_*:=\inf\left\{\frac{P_1(u)}{P_2(u)}: u \in X, F(u)<0\right\}.
\end{equation*}
both $c_\lambda^+$ and $c_\lambda^-$ are achieved for $\mu_*<\lambda<\lambda^*$:

\begin{theorem}[Minimization up to $\lambda^*$]\label{existenceouyang} Assume that $\lambda<\lambda^*$. Then $\mathcal{N}_\lambda^0=\emptyset$. Moreover, under (H1) the following holds:
	\begin{enumerate}
		\item If $\gamma>p$   
		 then $c_\lambda^-$ is achieved, i.e. there exists $u_\lambda\in \mathcal{N}_\lambda^-$ such that $\Phi_\lambda(u_\lambda)=c_\lambda^->0$. If, in addition, $\mu_*<\lambda$ then $c_\lambda^+$ is achieved, i.e. there exists $w_\lambda\in \mathcal{N}_\lambda^+$ such that $\Phi_\lambda(w_\lambda)=c_\lambda^+<0$.
		\item If $\gamma<p$ 
		then $c_\lambda^+$ is achieved, i.e. there exists $u_\lambda\in \mathcal{N}_\lambda^+$ such that $\Phi_\lambda(u_\lambda)=c_\lambda^+<0$. If, in addition, $\mu_*<\lambda$ then $c_\lambda^-$ is achieved, i.e. there exists $w_\lambda\in \mathcal{N}_\lambda^-$ such that $\Phi_\lambda(w_\lambda)=c_\lambda^->0$.
	\end{enumerate}
\end{theorem}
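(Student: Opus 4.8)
The plan is to phrase everything in terms of $A_\lambda(u):=P_1(u)-\lambda P_2(u)$, of $F(u)$, and of the fibering maps $\varphi_{\lambda,u}(t)=\Phi_\lambda(tu)=\frac{t^p}{p}A_\lambda(u)-\frac{t^\gamma}{\gamma}F(u)$. On $\mathcal{N}_\lambda$ one has $A_\lambda(u)=F(u)$, hence $\Phi_\lambda(u)=(\frac1p-\frac1\gamma)A_\lambda(u)$ and $\varphi_{\lambda,u}''(1)=(p-\gamma)A_\lambda(u)$, so membership in $\mathcal{N}_\lambda^\pm$ forces $A_\lambda(u)=F(u)\neq 0$, with sign determined by that of $p-\gamma$. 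For $\lambda<\lambda^*$ the definition of $\lambda^*$ gives $A_\lambda(u)>0$ whenever $F(u)=0$, $u\neq 0$ (trivially if $P_2(u)=0$); combined with $\varphi_{\lambda,u}''(1)=(p-\gamma)A_\lambda(u)$ this forces $\mathcal{N}_\lambda^0=\emptyset$, and then $\mathcal{N}_\lambda=\mathcal{N}_\lambda^+\cup\mathcal{N}_\lambda^-$ is the zero set of the $C^1$ functional $u\mapsto\Phi_\lambda'(u)u=A_\lambda(u)-F(u)$, whose radial derivative is $\varphi_{\lambda,u}''(1)\neq 0$ there, so $0$ is a regular value and $\mathcal{N}_\lambda$ is a $C^1$ manifold. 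A one-variable discussion of $\varphi_{\lambda,u}$ in each sign regime (using $p\neq\gamma$) shows that when $A_\lambda(u)$ and $F(u)$ share a nonzero sign there is a unique critical point $t_u>0$, which is the global maximum of $\varphi_{\lambda,u}$ on $(0,\infty)$ exactly when the Nehari point lands on $\mathcal{N}_\lambda^-$ (i.e. $F(u)>0$, $\gamma>p$, or $F(u)<0$, $\gamma<p$) and the global minimum when it lands on $\mathcal{N}_\lambda^+$.

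The second ingredient, which uses (H1), is a pair of uniform estimates valid for $\lambda<\lambda^*$: there are $c_0,m_0>0$ with $A_\lambda(u)\ge c_0\|u\|^p$ whenever $F(u)\ge 0$, and $-F(u)\ge m_0\|u\|^\gamma$ whenever $A_\lambda(u)<0$. By homogeneity each reduces to positivity of an infimum over $\{\|u\|=1\}$; if such an infimum were $\le 0$, a minimizing sequence $u_n\rightharpoonup u$ would satisfy $A_\lambda(u)\le 0$ and $F(u)\ge 0$ (weak lower semicontinuity of $A_\lambda=P_1-\lambda P_2$, weak upper semicontinuity of $F$), forcing $u=0$ by (H1) and $\lambda<\lambda^*$; then $P_2(u_n)\to 0$ and $P_1(u_n)=A_\lambda(u_n)+\lambda P_2(u_n)$ would tend to a nonpositive limit, contradicting $P_1(u_n)\ge C_1>0$.

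With these in hand I would run the two minimizations. Call the component of $\mathcal{N}_\lambda$ on which $F>0$ the ``$F$-positive'' one ($=\mathcal{N}_\lambda^-$ if $\gamma>p$, $=\mathcal{N}_\lambda^+$ if $\gamma<p$): under (H1) it is nonempty, since $A_\lambda(u_1)>0$ so the ray through $u_1$ meets it, and there $c_0\|u\|^p\le A_\lambda(u)=F(u)\le C_3\|u\|^\gamma$ bounds $\|u\|$ away from $0$, whence the corresponding $c_\lambda^\pm$ is finite, of the claimed sign, and any minimizing sequence is bounded (from $A_\lambda(u_n)\ge c_0\|u_n\|^p$ and $\Phi_\lambda(u_n)$ bounded). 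The other, ``$F$-negative'' component is nonempty exactly when $\mu_*<\lambda$ (then some $u$ has $F(u)<0$, $P_2(u)>0$, $P_1(u)/P_2(u)<\lambda$, so $A_\lambda(u)<0$); there finiteness and the sign of $c_\lambda^\pm$ follow from a closed-form expression for $\Phi_\lambda$ along the Nehari ray together with $-F(u)\ge m_0\|u\|^\gamma$, and a minimizing sequence is again bounded: if $\|u_n\|\to\infty$ then $v_n=u_n/\|u_n\|\rightharpoonup v$ with $A_\lambda(v_n),F(v_n)\to 0$ (since $\Phi_\lambda(u_n)$, hence $A_\lambda(u_n)=F(u_n)$, is bounded), forcing $v=0$ by (H1) and then $P_1(v_n)\to 0$, a contradiction.

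Finally, passing to the limit: extract $u_n\rightharpoonup u$. In the $F$-positive case $F(u)\ge\limsup F(u_n)>0$, so $u\neq 0$; in the $F$-negative case $u=0$ is impossible because $P_1(u_n)$ would then tend to $\lim A_\lambda(u_n)<0$; and in both cases weak semicontinuity and (H1) force $A_\lambda(u)$ and $F(u)$ to have the correct common nonzero sign, so there is a unique $t_u>0$ with $t_uu\in\mathcal{N}_\lambda^\pm$. Since $\Phi_\lambda=\frac1pP_1-\frac\lambda pP_2-\frac1\gamma F$ is weakly lower semicontinuous, $\Phi_\lambda(t_uu)\le\liminf\Phi_\lambda(t_uu_n)=\liminf\varphi_{\lambda,u_n}(t_u)$; if the target is $\mathcal{N}_\lambda^-$ then $t=1$ is the global maximum of $\varphi_{\lambda,u_n}$ on $(0,\infty)$, so $\varphi_{\lambda,u_n}(t_u)\le\varphi_{\lambda,u_n}(1)=\Phi_\lambda(u_n)$ and $\Phi_\lambda(t_uu)\le c_\lambda^-$; if it is $\mathcal{N}_\lambda^+$ then $t_u$ is the global minimum of $\varphi_{\lambda,u}$, so $\Phi_\lambda(t_uu)\le\varphi_{\lambda,u}(1)=\Phi_\lambda(u)\le c_\lambda^+$; either way, since $t_uu\in\mathcal{N}_\lambda^\pm$, $\Phi_\lambda(t_uu)=c_\lambda^\pm$ and the infimum is attained. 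I expect this last step — keeping the sign bookkeeping straight so that the weak limit is nonzero and its Nehari rescaling stays in the intended component — to be the main obstacle; it is where (H1) is indispensable and where $\mu_*<\lambda$ is needed to make the second component nonempty, and the whole argument is mirror-symmetric under interchanging $\gamma>p$ with $\gamma<p$ and $\mathcal{N}_\lambda^+$ with $\mathcal{N}_\lambda^-$.
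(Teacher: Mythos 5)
Your proposal is correct and follows essentially the same route as the paper: the emptiness of $\mathcal{N}_\lambda^0$ for $\lambda<\lambda^*$, the two uniform estimates $H_\lambda(u)\ge c_0\|u\|^p$ when $F(u)\ge 0$ and $-F(u)\ge m_0\|u\|^{\gamma}$ when $H_\lambda(u)\le 0$ (the paper's Lemma 2.1 and Proposition 2.5), and then the weak-limit plus fibering-projection argument with the global max/min dichotomy. The sign bookkeeping you flag as the main obstacle is handled exactly as in the paper, so no gap remains.
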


For $\lambda \geq \lambda^*$ we shall be concerned only with minimization over $\mathcal{N}_\lambda^+$. Indeed, it turns out that $c_\lambda^-=0$ for $\lambda >\lambda^*$, see Lemma \ref{unboundedenergy} below. The following conditions play an important role in this case:\\
\begin{itemize}
	\item[(C1)] If  $\lambda^*$ is achieved by $u$ then $F'(u)\neq 0$.
	\item[(C2)] If $\lambda^*$ is achieved by $u$ then $H_{\lambda^*}'(u)\neq 0$.
	\item [(S)] If $w_n \rightharpoonup w$ in $X$ and $P_1(w_n) \to P_1(w)$, then $w_n\to w$ in $X$.\\
\end{itemize}

We point out that (S) is a structural condition needed in our minimization arguments, which is satisfied for instance if $P_1(u)=\|u\|^p$, in view of the uniform convexity of $X$.
On the other hand, (C1) and (C2) guarantee,  in combination with (H1), 
 that $\lambda^*$ is achieved by some $u \in \mathcal{N}_{\lambda}^0$, which up to some multiplicative constant yields a critical point of $\Phi_{\lambda^*}$. A second critical point may still be found in $\mathcal{N}_{\lambda^*}^+$:
 
\begin{theorem}[Minimization at $\lambda^*$]\label{existencelambda^*} Suppose (H1), (S),    (C1), (C2), and $\mu_*< \lambda^*$. Then:
\begin{enumerate}
\item   $\lambda^*$ is achieved and its minimizers satisfy $F(u)=0$. Moreover there exists $t>0$ satisfying $tu\in \mathcal{N}_{\lambda^*}^0$ and $\Phi_{\lambda^*}'(tu)=\Phi_{\lambda^*}(tu)=0$.
\item $c_{\lambda^*}^+$ is a critical value of $\Phi_{\lambda^*}'$, i.e. there exists $u\in \mathcal{N}_{\lambda^*}^+$ such that $\Phi_{\lambda^*}(u)=c_{\lambda^*}^+<0$ and $\Phi_{\lambda^*}'(u)=0$.
\end{enumerate}	
	 
\end{theorem}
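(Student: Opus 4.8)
The plan is to handle the two items separately: for (1) I would minimize the $0$-homogeneous quotient $P_1/P_2$ over $\{F=0\}$ and then rescale a minimizer into $\mathcal{N}_{\lambda^*}^0$; for (2) I would minimize $\Phi_{\lambda^*}$ over the open piece $\mathcal{N}_{\lambda^*}^+$, the only real difficulty being to prevent a minimizing sequence from degenerating.

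\emph{Proof of (1).} Take $u_n$ with $F(u_n)=0$, $\|u_n\|=1$ and $P_1(u_n)/P_2(u_n)\to\lambda^*$. The standing bounds make $P_1(u_n),P_2(u_n)$ bounded and bounded away from $0$, so along a subsequence $u_n\rightharpoonup u$, with $P_2(u_n)\to P_2(u)>0$ (weak continuity of $P_2$) and hence $P_1(u_n)\to\lambda^* P_2(u)$. Weak lower semicontinuity of $P_1$ gives $P_1(u)\le\lambda^* P_2(u)$, weak upper semicontinuity of $F$ gives $F(u)\ge0$, and then (H1) forces $P_1(u)\ge\lambda^* P_2(u)$; thus $P_1(u)=\lambda^* P_2(u)=\lim P_1(u_n)$, so (S) upgrades the convergence to $u_n\to u$ in $X$, whence $\|u\|=1$ and $F(u)=0$. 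For the rescaling put $G:=\frac1p(P_1-\lambda^* P_2)$; by (H1), $G\ge0$ on $\{F\ge0\}$ while $G(u)=F(u)=0$, so $u$ minimizes $G$ over $\{F\ge0\}$. Since $F'(u)\ne0$ by (C1), the Lagrange rule gives $G'(u)=\mu F'(u)$ with $\mu\ge0$ (test against a direction pointing into $\{F\ge0\}$), and (C2) rules out $\mu=0$, so $\mu>0$. By homogeneity $\Phi_{\lambda^*}'(tu)=t^{p-1}G'(u)-\frac{t^{\gamma-1}}{\gamma}F'(u)=(\mu t^{p-1}-\frac{t^{\gamma-1}}{\gamma})F'(u)$, which vanishes at the unique $t>0$ with $t^{p-\gamma}=1/(\mu\gamma)$; and since $P_1(u)-\lambda^* P_2(u)=0=F(u)$ the fiber $\varphi_{\lambda^*,u}$ is identically $0$, so $tu\in\mathcal{N}_{\lambda^*}^0$ and $\Phi_{\lambda^*}(tu)=0$.

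\emph{Proof of (2).} Write $A(w)=P_1(w)-\lambda^* P_2(w)$, $B(w)=F(w)$. Recall from the fibering analysis that $\mathcal{N}_{\lambda^*}^+$ is exactly the set of scalings $t_w w$ of the $w$ with $A(w),B(w)$ nonzero of sign $\operatorname{sign}(p-\gamma)$; that each such $t_w w$ is the \emph{global} minimum of $\varphi_{\lambda^*,w}$; that $\Phi_{\lambda^*}=\frac{\gamma-p}{p\gamma}F<0$ on $\mathcal{N}_{\lambda^*}^+$; and that $\Phi_{\lambda^*}\equiv0$ on $\mathcal{N}_{\lambda^*}^0$ and at $0$. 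Nonemptiness: if $\gamma>p$, $\mu_*<\lambda^*$ furnishes $v$ with $F(v)<0$, $A(v)<0$; if $\gamma<p$, the $u_1$ with $F(u_1)>0$ has $A(u_1)>0$, because $A(u_1)\ge0$ by (H1) and $A(u_1)=0$ would make $u_1$ an interior minimizer of $G$ on $\{F\ge0\}$, forcing $G'(u_1)=0$ against (C2). In either case $c_{\lambda^*}^+<0$. Now let $\{u_n\}\subseteq\mathcal{N}_{\lambda^*}^+$ be minimizing; since $\Phi_{\lambda^*}=\frac{\gamma-p}{p\gamma}F$ there, $F(u_n)\to\beta:=\frac{p\gamma}{\gamma-p}c_{\lambda^*}^+$ (with $\operatorname{sign}\beta=\operatorname{sign}(p-\gamma)$) and $A(u_n)=F(u_n)\to\beta$. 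Granting $\{u_n\}$ bounded, pass to $u_n\rightharpoonup u$; weak lower semicontinuity of $\Phi_{\lambda^*}$ (from that of $P_1$, the weak continuity of $P_2$ and the weak upper semicontinuity of $F$) gives $\Phi_{\lambda^*}(u)\le c_{\lambda^*}^+<0$, so $u\ne0$ and $u\notin\mathcal{N}_{\lambda^*}^0$. If $u_n\not\to u$, then $P_1(u_n)\not\to P_1(u)$ by (S); since $P_1(u_n)\to\beta+\lambda^* P_2(u)$ while $P_1(u)\le\liminf P_1(u_n)$, this means $A(u)<\beta$, which with $F(u)\ge\limsup F(u_n)=\beta$ gives $\Phi_{\lambda^*}(u)=\frac1p A(u)-\frac1\gamma F(u)<\frac1p\beta-\frac1\gamma\beta=c_{\lambda^*}^+$. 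Taking $\gamma>p$ (the case $\gamma<p$ is symmetric, with $A$ and $B$ interchanged and (C2) used when $A(u)=0$): then $A(u)<\beta<0$, so either $B(u)<0$, whence $t_u u\in\mathcal{N}_{\lambda^*}^+$ with $\Phi_{\lambda^*}(t_u u)\le\Phi_{\lambda^*}(u)<c_{\lambda^*}^+$, contradicting the definition of $c_{\lambda^*}^+$; or $B(u)=F(u)\ge0$, whence (H1) forces $A(u)\ge0$, contradicting $A(u)<0$. Hence $u_n\to u$, so $\Phi_{\lambda^*}(u)=c_{\lambda^*}^+$, $\varphi_{\lambda^*,u}'(1)=0$ and $\varphi_{\lambda^*,u}''(1)>0$ (it cannot be $0$, else $\Phi_{\lambda^*}(u)=0$); thus $u\in\mathcal{N}_{\lambda^*}^+$ attains $c_{\lambda^*}^+$, and being a minimizer of $\Phi_{\lambda^*}$ on the $C^1$ manifold $\mathcal{N}_{\lambda^*}^+$ it is a critical point of $\Phi_{\lambda^*}$, as recalled in the Introduction.

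\emph{The main obstacle.} The step I expect to demand real effort is the compactness of the minimizing sequence for $c_{\lambda^*}^+$. Its drift onto $\mathcal{N}_{\lambda^*}^0$ is already blocked above (by $c_{\lambda^*}^+<0=\Phi_{\lambda^*}|_{\mathcal{N}_{\lambda^*}^0}$ and the estimate $\Phi_{\lambda^*}(u)\le c_{\lambda^*}^+$), but one still has to exclude $\|u_n\|\to\infty$. Here I anticipate needing the joint strength of (S), (C1), (C2) and (H1): along any would-be unbounded minimizing sequence the normalized functions $u_n/\|u_n\|$ converge strongly (by (S) and (H1)) to a minimizer of $\lambda^*$, and a rate comparison of $A$ and $F$ near that minimizer — legitimate since $F'\ne0$ and $G'\ne0$ there by (C1) and (C2) — forces $\Phi_{\lambda^*}(u_n)\to0$, contradicting $c_{\lambda^*}^+<0$. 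Executing this dichotomy cleanly, and uniformly in the sub- and super-homogeneous cases, is the crux of the proof.
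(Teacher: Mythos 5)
Your part (1) is essentially the paper's own argument and is correct: you recover a minimizer of the quotient by the same compactness as in Lemma \ref{basic}, and your KKT-style treatment of the inequality constraint $\{F\ge 0\}$ (getting $G'(u)=\mu F'(u)$ with $\mu\ge 0$ directly, then $\mu>0$ from (C2)) is a slightly cleaner route to the sign of the multiplier than the paper's equality-constrained Lagrange rule followed by the directional-derivative contradiction with \eqref{eh1}; obtaining $F(u)=0$ via (S) and strong convergence, rather than by excluding $F(u)>0$ through (C2), is also fine.

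Part (2), however, has a genuine gap, and it is exactly the step you flag: nothing in your argument shows that $c^+_{\lambda^*}>-\infty$, nor that a minimizing sequence in $\mathcal{N}^+_{\lambda^*}$ is bounded (your identification of $\beta$ already presupposes finiteness). At $\lambda=\lambda^*$ the coercivity estimate of Proposition \ref{coerciindefin} fails, so $\mathcal{N}^+_{\lambda^*}$ may be unbounded, and for $u_n=t_nw_n\in\mathcal{N}^+_{\lambda^*}$ with $\|w_n\|=1$ one has, say for $\gamma>p$,
\begin{equation*}
\Phi_{\lambda^*}(u_n)=-\frac{\gamma-p}{p\gamma}\,\frac{|H_{\lambda^*}(w_n)|^{\frac{\gamma}{\gamma-p}}}{|F(w_n)|^{\frac{p}{\gamma-p}}}.
\end{equation*}
If $t_n=\|u_n\|\to\infty$ then $F(w_n)\to0^-$, $H_{\lambda^*}(w_n)\to0^-$ and (up to a subsequence) $w_n\to w$ achieving $\lambda^*$; but (C1)--(C2) only assert $F'(w)\ne0$ and $H_{\lambda^*}'(w)\ne0$, and this does not control the relative rates at which $F(w_n)$ and $H_{\lambda^*}(w_n)$ vanish: $w_n$ may approach $w$ along directions in $\ker F'(w)$, making $F$ vanish to higher order than $H_{\lambda^*}$, in which case the displayed quotient need not tend to $0$. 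So your sketched ``rate comparison'' cannot be closed from the stated hypotheses, and with it both the finiteness of $c^+_{\lambda^*}$ and the boundedness of minimizing sequences remain unproved. What makes the comparison work in the paper is criticality: the paper does not minimize at $\lambda^*$ directly, but takes $\lambda_n\nearrow\lambda^*$ and the minimizers $u_n\in\mathcal{N}^+_{\lambda_n}$ given by Theorem \ref{existenceouyang}, which are critical points of $\Phi_{\lambda_n}$; writing $u_n=t_nw_n$, the Euler--Lagrange identity $t_n^{p-1}H'_{\lambda_n}(w_n)=t_n^{\gamma-1}F'(w_n)$ (up to constants) converts $t_n\to\infty$ into $F'(w)=0$ (resp. $H_{\lambda^*}'(w)=0$ when $\gamma<p$) at a minimizer of $\lambda^*$, contradicting (C1) (resp. (C2)); $t_n\not\to0$ follows from the uniform bound $c^+_{\lambda_n}\le c<0$ supplied by the monotonicity and continuity of $\lambda\mapsto c^+_\lambda$ (Lemma \ref{decreasingfibering}); and Lemma \ref{limit} then shows the weak limit attains $c^+_{\lambda^*}$ and is a critical point. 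To complete your proof you must either supply the missing compactness at $\lambda^*$ (which does not appear to follow from (H1), (S), (C1), (C2) alone) or switch to this approximation-from-below scheme.
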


Finally, for $\lambda$ larger than $\lambda^*$, the functional $\Phi_\lambda$ is no longer bounded from below on $\mathcal{N}_\lambda^+$, at least for $\gamma>p$ (see Lemma \ref{unboundedenergy} and Remark \ref{unbogamma<p}). Yet it has a local minimizer therein for $\lambda$ close to $\lambda^*$, which generates then a mountain-pass critical point:

\begin{theorem}[Minimization beyond $\lambda^*$] \label{t3} Suppose (H1), (S), (C1), (C2), and $\mu_*< \lambda^*$. Then there exists $\varepsilon>0$ having the following properties:  \begin{enumerate}
		\item $c_\lambda^+=-\infty$ and $\Phi_\lambda$ has a local minimizer  $u_\lambda \in \mathcal{N}_\lambda^+$ for $\lambda\in(\lambda^*,\lambda^*+\varepsilon)$. 
 \item Assume, in addition, the Palais-Smale condition:
	\begin{itemize}
		\item[$(PS)$]  If $(u_n) \subset X$ is a sequence such that $(\Phi_\lambda(u_n))$ 
		is bounded and $\Phi_\lambda'(u_n)\to 0$, then $(u_n)$ has a convergent subsequence.
	\end{itemize}
Then $\Phi_\lambda$ has a second critical point (of mountain-pass type) for $\lambda\in(\lambda^*,\lambda^*+\varepsilon)$.\\
	\end{enumerate}
\end{theorem}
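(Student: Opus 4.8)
The plan is to separate the two assertions of Theorem~\ref{t3}: first the existence of a local minimizer on $\mathcal{N}_\lambda^+$ for $\lambda$ slightly larger than $\lambda^*$, and then the mountain-pass argument under $(PS)$. For the first part, I would analyze the fibering maps $\varphi_{\lambda,u}$ near the critical configuration at $\lambda^*$. By Theorem~\ref{existencelambda^*}, at $\lambda=\lambda^*$ there is a point $u_0$ (a rescaling of a minimizer of $\lambda^*$) lying in $\mathcal{N}_{\lambda^*}^0$ with $\Phi_{\lambda^*}(u_0)=0$ and $\Phi_{\lambda^*}'(u_0)=0$; moreover $c_{\lambda^*}^+<0$ is attained at some $w_0\in\mathcal{N}_{\lambda^*}^+$. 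The idea is that for $\lambda$ slightly above $\lambda^*$, the "valley" in $\mathcal{N}_\lambda^+$ where $\Phi_\lambda$ is negative persists: one shows that for every $u$ in a suitable subset (for instance those $u$ with $F(u)>0$, or those close to $w_0$) the fibering map $\varphi_{\lambda,u}$ still has a strict local minimum at some $t_\lambda(u)>0$ with negative critical value, and that the infimum of $\Phi_\lambda$ over a bounded, weakly closed piece of $\mathcal{N}_\lambda^+$ is attained in the interior by the direct method, using condition (S) to upgrade weak convergence to strong convergence (the functional $P_1$ being w.l.s.c. and $P_2, F$ having the stated continuity). The fact that $c_\lambda^+=-\infty$ for $\lambda>\lambda^*$ is Lemma~\ref{unboundedenergy}, so the local minimizer cannot be a global one, which is exactly the geometry needed for mountain pass.

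For the continuity/persistence step I would use a perturbation argument: write $\Phi_\lambda=\Phi_{\lambda^*}-\frac{\lambda-\lambda^*}{p}P_2$ and estimate how much the fibering maps and their critical points move when $\lambda$ increases by a small amount. The key quantitative point is that at $\lambda^*$ the local minimizer $w_0\in\mathcal{N}_{\lambda^*}^+$ satisfies $\varphi_{\lambda^*,w_0}''(1)>0$, so by the implicit function theorem the map $\lambda\mapsto$ (the corresponding critical point of $\varphi_{\lambda,w_0/\|w_0\|}$) is $C^1$ near $\lambda^*$, giving a genuine element $u_\lambda\in\mathcal{N}_\lambda^+$ with $\Phi_\lambda(u_\lambda)<0$ depending continuously on $\lambda$; combining this with the direct-method minimization on a bounded region shows $u_\lambda$ is a local minimizer of $\Phi_\lambda$ on $\mathcal{N}_\lambda^+$ (hence, since $\mathcal{N}_\lambda^+$ is open in $\mathcal{N}_\lambda$ and relatively open among the non-degenerate points, a critical point of $\Phi_\lambda$ itself by the second bulleted property in the introduction). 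One must take $\varepsilon$ small enough that the relevant sublevel/neighborhood in $\mathcal{N}_\lambda^+$ stays away from $\mathcal{N}_\lambda^0$ so that the constrained minimizer is unconstrained-critical.

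For the second critical point I would set up the mountain-pass theorem for $\Phi_\lambda$ on the whole space $X$. Since $c_\lambda^+=-\infty$, starting from the local minimizer $u_\lambda$ there is a path escaping to arbitrarily negative energy (indeed along suitable rays $\varphi_{\lambda,u}(t)\to-\infty$ for directions with $F(u)>0$, using $\gamma>p$; for the case $\gamma<p$ one invokes Remark~\ref{unbogamma<p}). One checks the mountain-pass geometry: $u_\lambda$ is a strict local minimum of $\Phi_\lambda$ in $X$ (not merely on the manifold) — this needs a short argument showing that moving off $\mathcal{N}_\lambda$ in the fiber direction only increases $\Phi_\lambda$ near $u_\lambda$, so that $u_\lambda$ is separated from both the trivial function $0$ (where $\Phi_\lambda(0)=0>\Phi_\lambda(u_\lambda)$) and from the far-away point of very negative energy by a ring on which $\Phi_\lambda$ stays above $\Phi_\lambda(u_\lambda)$. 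With $(PS)$ in hand, the mountain-pass theorem yields a critical point $v_\lambda$ with $\Phi_\lambda(v_\lambda)\ge$ the mountain-pass level $>\Phi_\lambda(u_\lambda)$, so $v_\lambda\neq u_\lambda$, and $v_\lambda\neq 0$ since the mountain-pass level is positive relative to $\Phi_\lambda(u_\lambda)$ but one must also confirm it is not $0$ — this follows because the mountain-pass value is $>\Phi_\lambda(u_\lambda)$ and the geometry forces it to be attained at a nontrivial point. The main obstacle I anticipate is the first part: making rigorous that the negative-energy well in $\mathcal{N}_\lambda^+$ survives the degeneracy at $\lambda^*$ and produces an honest local (not global) minimizer, i.e. carefully choosing the weakly closed bounded region on which to run the direct method and showing the minimizer lands in its interior and in $\mathcal{N}_\lambda^+$ rather than drifting onto $\mathcal{N}_\lambda^0$ or to the boundary; the $(PS)$-based mountain-pass step is then comparatively standard.
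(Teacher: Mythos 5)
Your overall strategy (constrained minimization on a bounded piece of $\mathcal{N}_\lambda^+$ to get a local minimizer, then mountain pass under $(PS)$ using the unboundedness from below) is the same as the paper's, but the step you yourself flag as ``the main obstacle'' is precisely the content of the paper's proof, and your proposal does not supply it. The implicit function theorem applied to the fibering map at $w_0\in\mathcal{N}_{\lambda^*}^+$ only produces, for $\lambda$ near $\lambda^*$, a single point $t_\lambda(w_0/\|w_0\|)\,w_0/\|w_0\|$ of $\mathcal{N}_\lambda^+$ with negative energy close to $c_{\lambda^*}^+$; a critical point of one fibering map says nothing about minimality over nearby directions, so it is not a local minimizer of $\Phi_\lambda|_{\mathcal{N}_\lambda^+}$. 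What is actually needed, and what the paper constructs, is the auxiliary set $\mathcal{N}_{\lambda,\mu}^+=\{u\in\mathcal{N}_\lambda^+: H_\mu(u)<0\}$ (for $\gamma>p$; $\{F(u)>\mu\}$ for $\gamma<p$) with $\mu\in(\mu_*,\lambda^*)$: one proves it is bounded (Proposition \ref{boundecoerci>lambda^*}), that $c_{\lambda,\mu}^+$ is attained on its closure (Theorem \ref{existenceouyanggeneral}), and then — the crux — that for $\lambda$ close to $\lambda^*$ the minimizer does not sit on the artificial boundary $\{H_\mu=0\}$. That last step is a contradiction argument requiring the compactness of the set $\mathcal{S}_{\lambda^*}$ of minimizers at $\lambda^*$ (Lemma \ref{compact}), the choice of $\mu$ so that $H_\mu<0$ strictly on all of $\mathcal{S}_{\lambda^*}$ (Corollary \ref{submanifold}), and a convergence lemma showing that minimizers for $\lambda_n\searrow\lambda^*$ converge strongly, via (S), to elements of $\mathcal{S}_{\lambda^*}$ (Lemma \ref{limit1}, which in turn uses the monotonicity/continuity of $\lambda\mapsto c_\lambda^+$ from Lemma \ref{decreasingfibering}). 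None of this machinery is present or replaced in your sketch, so the existence of the local minimizer is not established.

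A second, smaller gap is in the mountain-pass geometry. To separate $u_\lambda$ from $0$ and from the low-energy point by a ``ring'' on which $\Phi_\lambda$ is \emph{strictly} above $\Phi_\lambda(u_\lambda)$, it is not enough to note that moving off $\mathcal{N}_\lambda$ along the fiber through $u_\lambda$ increases the energy: the boundary of a neighborhood of $u_\lambda$ in $X$ contains points $tu$ with $u$ ranging over a whole piece of (the closure of) $\mathcal{N}_{\lambda,\mu}^+$ and over its relative boundary. The paper handles this with two uniform estimates: $\inf_{\partial\mathcal{N}_{\lambda,\mu}^+}\Phi_\lambda>c_{\lambda,\mu}^+$ (Corollary \ref{MPG1}, itself a by-product of the interior-minimizer argument above) and a uniform lower bound $\varphi_{\lambda,u}''(1)>c$ for all $u\in\overline{\mathcal{N}_{\lambda,\mu}^+}$, which yields $\Phi_\lambda(tu)\ge\Phi_\lambda(u)+\tfrac{c}{2}(t-1)^2$ on the tube $B_\delta$ (Proposition \ref{MPG2}). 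Your ``short argument'' at the single point $u_\lambda$ does not give the required uniform strict separation. With these two ingredients supplied, the remainder of your plan (choosing $v_\lambda$ outside the tube with $\Phi_\lambda(v_\lambda)<c_{\lambda,\mu}^+$ and applying the mountain-pass theorem under $(PS)$) matches the paper.
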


\begin{remark}
Let us make some comments on the assumptions of Theorems \ref{existencelambda^*} and \ref{t3} in the context of our applications. In the case of the problem \eqref{eo}, many of these conditions  are satisfied if $\int_\Omega f(x)|\phi_1|^\gamma<0$, where $\phi_1$ is a positive eigenfunction associated to $\lambda_1(p)$, the first eigenvalue of the Dirichlet $p$-Laplacian. Indeed, in this case we clearly have $\mu_*=\lambda_1(p)<\lambda^*$. Moreover, the fact that $\lambda_1(p)$ is the only eigenvalue associated to a positive eigenfunction of the Dirichlet $p$-Laplacian implies that (H1) and (C2) satisfied. We refer to Section 3 for more details on this issue as well as the verification of these conditions for our further applications.
\end{remark}
The work is organized as follows: in section 2 we prove Theorems \ref{existenceouyang}, \ref{existencelambda^*} and \ref{t3}. In section 3 we apply these theorems to unify and extend previous existence and multiplicity results for critical points of indefinite, $(p,q)$-Laplacian, and Kirchhoff type problems.

\section{Proofs} 
To simplify the notation we set
$$H_\lambda:=P_1-\lambda P_2, \quad \mbox{i.e.} \quad
\Phi_\lambda=\frac{1}{p}H_\lambda -\frac{1}{\gamma}F.$$

\subsection{Basic properties}
The following result shall be used repeatedly:

\begin{lemma}\label{basic}
If $(u_n)\subset S$,  $\lambda_n \to \lambda \ge 0$, and $\limsup H_{\lambda_n}(u_n) \leq 0$ then, up to a subsequence, $u_n \rightharpoonup u$ in $X$ and $P_2(u)>0$. In particular $u \neq 0$.
\end{lemma}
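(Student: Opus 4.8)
The plan is to first extract a weak limit using reflexivity of $X$, and then to combine the uniform positive lower bound for $P_1$ on the sphere $S$ with the weak continuity of $P_2$ in order to force $P_2(u)>0$.

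First I would note that $(u_n)\subset S$ is bounded, so by reflexivity of $X$ there is a subsequence, still denoted $(u_n)$, with $u_n\rightharpoonup u$ in $X$; the hypothesis $\limsup_n H_{\lambda_n}(u_n)\le 0$ is inherited by this subsequence, so nothing is lost in passing to it.

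The core of the argument is a lower bound. Since $u_n\in S$, the structural assumption gives $P_1(u_n)\ge C_1\|u_n\|^p=C_1>0$, hence
\[
\lambda_n P_2(u_n)=P_1(u_n)-H_{\lambda_n}(u_n)\ge C_1-H_{\lambda_n}(u_n).
\]
Taking $\liminf$ and using $\limsup_n H_{\lambda_n}(u_n)\le 0$ yields $\liminf_n \lambda_n P_2(u_n)\ge C_1>0$. Since $P_2(u_n)\le C_2\|u_n\|^p=C_2$ is bounded, this already forces $\lambda\neq 0$, i.e. $\lambda>0$. Now the weak continuity of $P_2$ gives $P_2(u_n)\to P_2(u)$, and combined with $\lambda_n\to\lambda$ this gives $\lambda_n P_2(u_n)\to\lambda P_2(u)$. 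Comparing with the preceding inequality, $\lambda P_2(u)\ge C_1>0$, and since $\lambda>0$ we conclude $P_2(u)>0$. Finally $u\neq 0$ because $p$-homogeneity forces $P_2(0)=0$, so $P_2(u)>0$ already precludes $u=0$.

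I do not expect a genuine obstacle here; the only subtleties worth flagging are that the $\limsup$/$\liminf$ bookkeeping must be performed along the extracted subsequence, and that one should record $\lambda>0$ \emph{before} dividing through by $\lambda$ to conclude $P_2(u)>0$.
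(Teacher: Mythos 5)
Your proof is correct. It differs from the paper's in a small but genuine way: the paper argues by contradiction, assuming $P_2(u)\le 0$ and using the weak lower semicontinuity of $P_1$ (hence of $H_\lambda$) to squeeze $H_{\lambda_n}(u_n)\to H_\lambda(u)=0$, from which $P_1(u_n)\to\lambda P_2(u)\le 0$ contradicts $P_1(u_n)\ge C_1$. You instead run a direct argument off the identity $\lambda_n P_2(u_n)=P_1(u_n)-H_{\lambda_n}(u_n)$, using only the coercivity bound $P_1\ge C_1\|\cdot\|^p$ on $S$ and the weak continuity of $P_2$; this bypasses the weak lower semicontinuity of $P_1$ entirely, yields the quantitative conclusion $P_2(u)\ge C_1/\lambda$, and makes explicit the (correct) observation that the hypotheses force $\lambda>0$, so the case $\lambda=0$ in the statement is vacuous. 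The $\liminf$ bookkeeping ($\liminf(a_n-b_n)\ge\liminf a_n-\limsup b_n$) and the deduction $u\ne 0$ from $P_2(u)>0$ are both sound, so nothing is missing.
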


\begin{proof}
Since $(u_n)$ is bounded and $X$ is reflexive, up to a subsequence, $u_n \rightharpoonup u$. If $P_2(u)\le 0$ then $H_\lambda(u)\leq \liminf H_{\lambda_n}(u_n) \leq  \limsup H_{\lambda_n}(u_n) \leq 0 \le P_1(u) \le H_\lambda(u)$, i.e. $H_{\lambda_n}(u_n) \to H_\lambda(u)=0$. Since $P_1=H_\lambda+\lambda P_2$ and $P_2(u_n)\to P_2(u)$, it follows that $P_1(u_n) \to \lambda P_2(u)\le 0$. But $P_1(u_n)\geq C_1>0$, and we reach a contradiction.
\end{proof}

Since $\Phi_\lambda$ is composed by two homogeneous terms, one may easily formulate a necessary and sufficient condition to have $\mathcal{N}_\lambda \setminus \mathcal{N}_\lambda^0\neq \emptyset$:
\begin{lemma}\label{neharisign} If $u\in \mathcal{N}_\lambda \setminus \mathcal{N}_\lambda^0$ then $H_\lambda(u)F(u)>0$. Conversely, if $H_\lambda(u)F(u)>0$  then there exists a unique $t=t_\lambda(u)>0$ such that $tu\in \mathcal{N}_\lambda  \setminus \mathcal{N}_\lambda^0$, which is given by $t_\lambda(u)=\left(H_\lambda(u)/F(u)\right)^{\frac{1}{\gamma-p}}$.
\end{lemma}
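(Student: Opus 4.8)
The plan is to use the homogeneity of $H_\lambda$ and $F$ to make the fibering map completely explicit, after which the statement is elementary one–variable calculus. For $u\in X\setminus\{0\}$ and $t>0$,
$$\varphi_{\lambda,u}(t)=\Phi_\lambda(tu)=\frac{t^p}{p}H_\lambda(u)-\frac{t^\gamma}{\gamma}F(u),$$
hence
$$\varphi_{\lambda,u}'(t)=t^{p-1}H_\lambda(u)-t^{\gamma-1}F(u),\qquad \varphi_{\lambda,u}''(t)=(p-1)t^{p-2}H_\lambda(u)-(\gamma-1)t^{\gamma-2}F(u).$$
I would first record the two consequences at $t=1$: membership $u\in\mathcal{N}_\lambda$ is equivalent to $H_\lambda(u)=F(u)$, and then $\varphi_{\lambda,u}''(1)=(p-\gamma)H_\lambda(u)=(p-\gamma)F(u)$. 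In particular, for $u\in\mathcal{N}_\lambda$ we have $u\in\mathcal{N}_\lambda^0$ if and only if $H_\lambda(u)=F(u)=0$.

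The first assertion is then immediate: if $u\in\mathcal{N}_\lambda\setminus\mathcal{N}_\lambda^0$ then $H_\lambda(u)=F(u)\neq0$, so $H_\lambda(u)F(u)=F(u)^2>0$.

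For the converse, assume $H_\lambda(u)F(u)>0$, so that $F(u)\neq0$ and the ratio $H_\lambda(u)/F(u)$ is strictly positive. Dividing the equation $\varphi_{\lambda,u}'(t)=0$ by $t^{p-1}$ shows that, for $t>0$, $tu\in\mathcal{N}_\lambda$ is equivalent to $t^{\gamma-p}=H_\lambda(u)/F(u)$; since the right‑hand side is positive and $\gamma-p\neq0$, this has the unique positive solution $t_\lambda(u)=\bigl(H_\lambda(u)/F(u)\bigr)^{1/(\gamma-p)}$. For this $t_\lambda(u)$ we get $t_\lambda(u)u\in\mathcal{N}_\lambda$, and because $H_\lambda(t_\lambda(u)u)=t_\lambda(u)^pH_\lambda(u)\neq0$, the characterization of $\mathcal{N}_\lambda^0$ above gives $t_\lambda(u)u\notin\mathcal{N}_\lambda^0$. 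Uniqueness follows since any $t>0$ with $tu\in\mathcal{N}_\lambda\setminus\mathcal{N}_\lambda^0$ in particular lies in $\mathcal{N}_\lambda$ and hence already solves $t^{\gamma-p}=H_\lambda(u)/F(u)$.

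There is no genuine obstacle here; the argument is just the analysis of $t\mapsto t^{p-1}H_\lambda(u)-t^{\gamma-1}F(u)$ on $(0,\infty)$. The only two points deserving a line of care are that the base $H_\lambda(u)/F(u)$ of the fractional power is positive (so that $t_\lambda(u)$ is well defined, using the hypothesis $H_\lambda(u)F(u)>0$), and that scaling by $t_\lambda(u)$ keeps us off $\mathcal{N}_\lambda^0$, both of which are settled by the sign bookkeeping above.
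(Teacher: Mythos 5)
Your proof is correct and follows essentially the same route as the paper: reduce membership in $\mathcal{N}_\lambda$ to the equation $H_\lambda(u)=F(u)$ via the explicit fibering map, and solve $\varphi_{\lambda,u}'(t)=0$ for the unique positive $t$. You simply spell out the one-variable calculus (including the identity $\varphi_{\lambda,u}''(1)=(p-\gamma)H_\lambda(u)$ characterizing $\mathcal{N}_\lambda^0$) that the paper's terse proof leaves implicit.
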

\begin{proof} If $u\in \mathcal{N}_\lambda$, then $J_\lambda(u)=0$ and thus $H_\lambda(u)=F(u)$. Conversely, if $H_\lambda(u)$ and $F(u)$ are nonzero and have the same sign, then the fibering map $\varphi_{\lambda,u}$ has a unique positive critical point $t$, so that $tu\in \mathcal{N}_\lambda$. The equation $\varphi_{\lambda,u}'(t)=0$ yields the desired expression of $t_\lambda(u)$.
\end{proof}
Let us set
\begin{equation*}
D_\lambda^+:=\{u\in X\setminus\{0\}: H_\lambda(u),F(u)>0\},
\end{equation*}
\begin{equation*}
D_\lambda^-:=\{u\in X\setminus\{0\}: H_\lambda(u),F(u)<0\},
\end{equation*}
and
\begin{equation*}
D_\lambda^0:=\{u\in X\setminus\{0\}: H_\lambda(u)=F(u)=0\}.
\end{equation*}
Since $H_\lambda$ and $F$ are homogeneous, we see that $D_\lambda^+$, $D_\lambda^-$ and $D_\lambda^0$ are cones, i.e. $u \in D_\lambda^+$ if and only if $tu \in D_\lambda^+$ for any $t>0$. The following properties are straightforward, so we omit their proofs:

\begin{proposition}\label{fiberingmaps} There holds $\mathcal{N}_\lambda^0= D_\lambda^0$. Furthermore:
	\begin{enumerate}
		\item If $\gamma>p$, then:
		\begin{enumerate}
			\item For each $u\in D_\lambda^+$, the  point $t_\lambda(u)$ is a non-degenerate global maximum point of $\varphi_{\lambda,u}$. Moreover $\mathcal{N}_\lambda^-=\{t_\lambda(u)u: u\in D_\lambda^+\}$. In particular $\mathcal{N}_\lambda^- \subset D_\lambda^+$.
			\item For each $u\in D_\lambda^-$, the  point $t_\lambda(u)$ is a non-degenerate global minimum point of $\varphi_{\lambda,u}$. Moreover $\mathcal{N}_\lambda^+=\{t_\lambda(u)u: u\in D_\lambda^-\}$. In particular $\mathcal{N}_\lambda^+ \subset D_\lambda^-$.\\
		\end{enumerate}
		\item If $\gamma<p$, then:
	\begin{enumerate}
		\item For each $u\in D_\lambda^+$, the  point $t_\lambda(u)$ is a non-degenerate global minimum point of $\varphi_{\lambda,u}$. Moreover $\mathcal{N}_\lambda^+=\{t_\lambda(u)u: u\in D_\lambda^+\}$. In particular $\mathcal{N}_\lambda^+ \subset D_\lambda^+$.
	\item 	For each $u\in D_\lambda^-$, the  point $t_\lambda(u)$ is a non-degenerate global maximum point of $\varphi_{\lambda,u}$. Moreover $\mathcal{N}_\lambda^-=\{t_\lambda(u)u: u\in D_\lambda^-\}$. In particular $\mathcal{N}_\lambda^- \subset D_\lambda^-$.\\
	\end{enumerate}
	\end{enumerate}
\end{proposition}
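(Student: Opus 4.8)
The plan is to read off every assertion from the explicit shape of the fibering map. Writing $\varphi_{\lambda,u}(t)=\Phi_\lambda(tu)=\frac{t^p}{p}H_\lambda(u)-\frac{t^\gamma}{\gamma}F(u)$, I would record $\varphi_{\lambda,u}'(t)=t^{p-1}H_\lambda(u)-t^{\gamma-1}F(u)$ and $\varphi_{\lambda,u}''(t)=(p-1)t^{p-2}H_\lambda(u)-(\gamma-1)t^{\gamma-2}F(u)$, together with the fact (Lemma \ref{neharisign}) that $\varphi_{\lambda,u}'$ has a unique positive zero, namely $t_\lambda(u)=(H_\lambda(u)/F(u))^{1/(\gamma-p)}$, whenever $H_\lambda(u)F(u)>0$. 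Everything then reduces to one-variable calculus.

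For the identity $\mathcal{N}_\lambda^0=D_\lambda^0$: if $u\in\mathcal{N}_\lambda^0$ then $\varphi_{\lambda,u}'(1)=\varphi_{\lambda,u}''(1)=0$, i.e.\ $H_\lambda(u)=F(u)$ and $(p-1)H_\lambda(u)=(\gamma-1)F(u)$; subtracting gives $(p-\gamma)H_\lambda(u)=0$, and since $p\neq\gamma$ this forces $H_\lambda(u)=F(u)=0$, so $u\in D_\lambda^0$. Conversely, if $u\in D_\lambda^0$ then $\varphi_{\lambda,u}\equiv 0$, whence $u\in\mathcal{N}_\lambda^0$.

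For the four remaining claims the argument is the same in each case, so I would spell out case (1)(a) ($\gamma>p$, $u\in D_\lambda^+$) and indicate that the others follow verbatim after tracking signs. Here $H_\lambda(u),F(u)>0$ and $p<\gamma$, so $\varphi_{\lambda,u}$ is positive and increasing for small $t>0$ (its $t^p$ term dominates) and tends to $-\infty$ as $t\to\infty$ (its $-t^\gamma$ term dominates); since its only positive critical point is $t_\lambda(u)$, that point is a global maximum. Eliminating $F(u)$ from $\varphi_{\lambda,u}''(t_\lambda(u))$ by means of $\varphi_{\lambda,u}'(t_\lambda(u))=0$ gives $\varphi_{\lambda,u}''(t_\lambda(u))=(p-\gamma)\,t_\lambda(u)^{p-2}H_\lambda(u)<0$, so the maximum is non-degenerate. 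To identify $\mathcal{N}_\lambda^-$, I would use the scaling $\varphi_{\lambda,su}(t)=\varphi_{\lambda,u}(st)$: for $v=t_\lambda(u)u$ this yields $\varphi_{\lambda,v}''(1)=t_\lambda(u)^2\varphi_{\lambda,u}''(t_\lambda(u))<0$, so $v\in\mathcal{N}_\lambda^-$; conversely, if $v\in\mathcal{N}_\lambda^-$ then $H_\lambda(v)=F(v)$ and $(p-1)H_\lambda(v)<(\gamma-1)F(v)$ give $(p-\gamma)H_\lambda(v)<0$, hence $H_\lambda(v),F(v)>0$, i.e.\ $v\in D_\lambda^+$, and uniqueness in Lemma \ref{neharisign} forces $t_\lambda(v)=1$, so $v=t_\lambda(v)v$ is of the required form. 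Since $D_\lambda^+$ is a cone and $t_\lambda(u)u\in D_\lambda^+$ for $u\in D_\lambda^+$, we also get $\mathcal{N}_\lambda^-\subset D_\lambda^+$.

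Cases (1)(b), (2)(a) and (2)(b) are settled by the identical computation; only the bookkeeping changes. The sign of $\varphi_{\lambda,u}''(t_\lambda(u))=(p-\gamma)\,t_\lambda(u)^{p-2}H_\lambda(u)$ is the product of the signs of $p-\gamma$ and of $H_\lambda(u)$, which decides whether $t_\lambda(u)u$ lands in $\mathcal{N}_\lambda^+$ or in $\mathcal{N}_\lambda^-$; and which of the two monomials of $\varphi_{\lambda,u}$ dominates near $0$ and near $\infty$ (this depends only on the sign of $\gamma-p$ and on the signs of $H_\lambda(u),F(u)$) decides whether $t_\lambda(u)$ is a global minimum or a global maximum. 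There is no genuine obstacle here: the only points requiring minor care are upgrading ``unique positive critical point'' to ``global extremum'' via the asymptotics of the two monomials, and keeping the four sign combinations straight --- which is presumably why the authors left the proof to the reader.
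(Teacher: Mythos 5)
Your proof is correct and is exactly the routine one-variable computation the authors had in mind when they wrote ``the following properties are straightforward, so we omit their proofs''; the paper gives no proof to compare against. All the sign bookkeeping checks out, including the key identity $\varphi_{\lambda,u}''(t_\lambda(u))=(p-\gamma)t_\lambda(u)^{p-2}H_\lambda(u)$ and the scaling relation $\varphi_{\lambda,su}(t)=\varphi_{\lambda,u}(st)$ used to identify $\mathcal{N}_\lambda^{\pm}$.
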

Recall that
\begin{equation*}
\lambda^*:=\inf\left\{\frac{P_1(u)}{P_2(u)}: u \in X \setminus \{0\}, F(u)=0 \right\},
\end{equation*}
and
\begin{equation*}
\mu_*:=\inf\left\{\frac{P_1(u)}{P_2(u)}: u \in X, F(u)<0\right\}.
\end{equation*}
We also introduce
\begin{equation*}
\mu^*:=\sup\left\{\frac{P_1(u)}{P_2(u)}: u \in X, F(u)>0\right\}.
\end{equation*}

Let us prove now that $\lambda^*$ is the threshold value for $\mathcal{N}_\lambda$ to be a manifold and that $\mu_*, \mu^*$ determine whether $\mathcal{N}_\lambda^+$ and $\mathcal{N}_\lambda^-$ are empty or not:
\begin{proposition}\label{neharinotempty} We have $\mathcal{N}_\lambda^0=\emptyset$ for any $\lambda<\lambda^*$. In addition:
	\begin{enumerate}
		\item If $\gamma>p$, then the following assertions hold:
		\begin{enumerate}
			\item $\mathcal{N}_\lambda^-\neq \emptyset $ if, and only if, $\lambda<\mu^*$.
			\item $\mathcal{N}_\lambda^+\neq \emptyset$ if, and only if, $\lambda>\mu_*$.
		\end{enumerate}
		\item If $\gamma<p$, then the following assertions hold:
		\begin{enumerate}
			\item $\mathcal{N}_\lambda^-\neq \emptyset $ if, and only if, $\lambda>\mu_*$.
		\item $\mathcal{N}_\lambda^+\neq \emptyset$ if, and only if, $\lambda<\mu^*$.
		\end{enumerate}
	\end{enumerate}
\end{proposition}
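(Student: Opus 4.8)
The plan is to reduce the entire statement to bookkeeping with the cones $D_\lambda^{\pm},D_\lambda^0$ together with the ratio $P_1/P_2$, once we invoke the identification $\mathcal{N}_\lambda^0=D_\lambda^0$ and the descriptions of $\mathcal{N}_\lambda^{\pm}$ from Proposition~\ref{fiberingmaps}. First I would dispose of the threshold claim: if $u\in\mathcal{N}_\lambda^0=D_\lambda^0$ then $u\neq 0$, $F(u)=0$ and $P_1(u)=\lambda P_2(u)$; since $P_1(u)\ge C_1\|u\|^p>0$, this forces $P_2(u)>0$, hence $\lambda=P_1(u)/P_2(u)$, and because $F(u)=0$ the definition of $\lambda^*$ yields $\lambda\ge\lambda^*$. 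Consequently $\mathcal{N}_\lambda^0=\emptyset$ whenever $\lambda<\lambda^*$.

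For the non-emptiness assertions I would next reduce them to statements about $D_\lambda^{\pm}$. By Proposition~\ref{fiberingmaps}, when $\gamma>p$ one has $\mathcal{N}_\lambda^-=\{t_\lambda(u)u:u\in D_\lambda^+\}$ and $\mathcal{N}_\lambda^+=\{t_\lambda(u)u:u\in D_\lambda^-\}$, while for $\gamma<p$ the roles of $D_\lambda^+$ and $D_\lambda^-$ are interchanged. Since $u\mapsto t_\lambda(u)u$ is a well-defined map on each cone, in all cases $\mathcal{N}_\lambda^-\neq\emptyset$ (resp. $\mathcal{N}_\lambda^+\neq\emptyset$) is equivalent to non-emptiness of the relevant cone. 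Thus it suffices to prove the two equivalences: $D_\lambda^+\neq\emptyset$ if and only if $\lambda<\mu^*$, and $D_\lambda^-\neq\emptyset$ if and only if $\lambda>\mu_*$.

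For the first: if $u\in D_\lambda^+$ then $F(u)>0$ and $P_1(u)>\lambda P_2(u)$; if $P_2(u)>0$ this gives $P_1(u)/P_2(u)>\lambda$ and hence $\mu^*>\lambda$, and if $P_2(u)=0$ then $u$ already makes the supremum defining $\mu^*$ equal to $+\infty$, so again $\mu^*>\lambda$. Conversely, $\lambda<\mu^*$ means, by definition of the supremum, that there is $u$ with $F(u)>0$ and $P_1(u)/P_2(u)>\lambda$ (automatically so when $P_2(u)=0$), whence $H_\lambda(u)=P_1(u)-\lambda P_2(u)>0$ and $u\in D_\lambda^+$. For the second: if $u\in D_\lambda^-$ then $F(u)<0$ and $\lambda P_2(u)>P_1(u)\ge C_1\|u\|^p>0$, which forces $P_2(u)>0$ (and $\lambda>0$); then $P_1(u)/P_2(u)<\lambda$, so $\mu_*<\lambda$. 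Conversely, $\lambda>\mu_*$ furnishes $u$ with $F(u)<0$ and $P_1(u)/P_2(u)<\lambda$, necessarily with $P_2(u)>0$, so that $H_\lambda(u)<0$ and $u\in D_\lambda^-$.

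Assembling these pieces with the dictionary of Proposition~\ref{fiberingmaps} gives all four equivalences (1)(a)--(b) and (2)(a)--(b). There is no serious obstacle here; the only points needing a little care are the elementary inf/sup manipulations — a supremum (resp. infimum) lying strictly above (resp. below) $\lambda$ must be witnessed by an actual element of the set — and the degenerate possibility $P_2(u)=0$, which cannot occur on $D_\lambda^-$ but may occur on $D_\lambda^+$, where it simply forces $\mu^*=+\infty$ and makes the corresponding equivalence trivial.
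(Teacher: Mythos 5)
Your proof is correct and follows essentially the same route as the paper's: identify $\mathcal{N}_\lambda^0$ with $D_\lambda^0$ to get the threshold claim, then reduce the four non-emptiness equivalences to $D_\lambda^{\pm}\neq\emptyset$ via Proposition~\ref{fiberingmaps} and unwind the definitions of $\mu_*$ and $\mu^*$. The only difference is cosmetic — you argue some directions by contrapositive and explicitly handle the degenerate case $P_2(u)=0$, which the paper leaves implicit.
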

\begin{proof} 
	If $\lambda<\lambda^*$, then for any $u\in X\setminus\{0\}$ satisfying $F(u)=0$ we have $P_1(u)/P_2(u)>\lambda$ or, equivalently, $H_\lambda(u)>0$, so that $\mathcal{N}_\lambda^0=D_\lambda^0 = \emptyset$. 
	Let us assume that $\gamma>p$ and prove (1).\\
\begin{enumerate}
\item [(a)] Indeed, if $\lambda<\mu^*$ then there exists $u\in X\setminus\{0\}$ such that $F(u)>0$ and $P_1(u)/P_2(u)>\lambda$ or, equivalently, $H_\lambda(u)>0$ and thus, by Proposition \ref{fiberingmaps} it follows that $\mathcal{N}_\lambda^-\neq \emptyset$. Now, if $\lambda\ge \mu^*$, then it is clear that for every $u$ satisfying $F(u)>0$ we must have $H_\lambda(u)\le 0$, i.e. $D_\lambda^+ = \emptyset$, so that by Proposition \ref{fiberingmaps}, we conclude that $\mathcal{N}_\lambda^-= \emptyset$. \\
	
\item [(b)]	If $\lambda>\mu_*$ then there exists $u\in X\setminus\{0\}$ such that $F(u)<0$ and $P_1(u)/P_2(u)<\lambda$ or, equivalently, $H_\lambda(u)<0$ and thus, by Lemma \ref{neharisign} it follows that $\mathcal{N}_\lambda^+\neq \emptyset$. Now, if $\lambda<\mu_*$, then it is clear that for all $u$ satisfying $F(u)<0$ we must have $H_\lambda(u)\geq 0$, i.e. $D_\lambda^- = \emptyset$, so that by Proposition \ref{fiberingmaps}, we have $\mathcal{N}_\lambda^+= \emptyset$. \\ 
\end{enumerate}
The proof of (2) is completely similar, so we omit it.

\end{proof}

\subsection{Minimization up to $\lambda^*$}
In the sequel we shall use the assumption (H1), which we recall below:\\
\begin{itemize}
	\item[(H1)] $\lambda^*= \inf\left\{\frac{P_1(u)}{P_2(u)}: u \in X, F(u)\ge 0\right\}$.\\
\end{itemize}

 It is straightforward that this condition implies that $\lambda^*<\mu^*$ and
 the following properties:\\
\begin{equation}\label{eh1}
  H_{\lambda}(u)>0 \mbox{ for any $u \in X \setminus \{0\}$ such that } F(u) \geq 0, \mbox{ and any } \lambda<\lambda^*,
\end{equation}
and
\begin{equation}\label{eh11}
H_{\lambda^*}(u)\ge 0 \mbox{ for any $u \in X$ such that } F(u) \geq 0.\\
\end{equation}

The next result shows in particular that \eqref{eh1} actually holds in a stronger form under (H1), and that $N_\lambda^-$, $N_\lambda^+$ are away from zero and infinity, respectively, if $\lambda<\lambda^*$:
\begin{proposition}\label{coerciindefin} Suppose (H1) and $\lambda<\lambda^*$. Then: 
	\begin{enumerate}
		\item There exist $C,D>0$ such that	
		\begin{enumerate}
\item $H_\lambda(u)\ge C\|u\|^p$, for every $u\in X$ such that $F(u)\ge 0$.
\item $F(u)\le -D\|u\|^\gamma$ for every $u\in X$ such that $H_\lambda(u)\le 0$.
		\end{enumerate}
		\item  $\mathcal{N}_\lambda^-$ is away from zero, i.e. there exists $c>0$ such that $c\le \|u\|$ for all $u\in \mathcal{N}_\lambda^-$. Moreover $\Phi_\lambda$ is coercive over $\mathcal{N}_\lambda^-$.
		\item $\mathcal{N}_\lambda^+$ is bounded, i.e. there exists $C>0$ such that $\|u\|\le C$ for all $u\in \mathcal{N}_\lambda^+$.
	\end{enumerate}
\end{proposition}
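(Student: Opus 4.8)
The plan is to reduce parts (2) and (3) to the two sharp estimates in (1), and to prove (1) by a weak-compactness contradiction argument on the unit sphere $S=\{u\in X:\|u\|=1\}$. One may assume $\lambda>0$ throughout: if $\lambda\le 0$ then $H_\lambda(u)=P_1(u)-\lambda P_2(u)\ge P_1(u)\ge C_1\|u\|^p$ for every $u$, so (1)(a) holds with $C=C_1$, the hypothesis of (1)(b) is void, and the remaining assertions follow from the argument below with these simplifications.

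First I would prove (1)(a). By the $p$-homogeneity of $H_\lambda$ and the $\gamma$-homogeneity of $F$, this is equivalent to $C:=\inf\{H_\lambda(u):u\in S,\ F(u)\ge 0\}>0$; the set is nonempty since $F(u_3)=0$, and \eqref{eh1} already gives $C\ge 0$. Supposing $C=0$, choose $(u_n)\subset S$ with $F(u_n)\ge 0$ and $H_\lambda(u_n)\to 0$. Then $\limsup H_\lambda(u_n)\le 0$, so Lemma \ref{basic} yields, along a subsequence, $u_n\rightharpoonup u$ with $P_2(u)>0$, hence $u\ne 0$. Weak upper semicontinuity of $F$ gives $F(u)\ge\limsup F(u_n)\ge 0$, and weak lower semicontinuity of $H_\lambda$ (valid because $P_1$ is weakly l.s.c.\ and $P_2$ is weakly continuous) gives $H_\lambda(u)\le\liminf H_\lambda(u_n)=0$, contradicting \eqref{eh1}. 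Estimate (1)(b) is obtained the same way: it amounts to $M:=\sup\{F(u):u\in S,\ H_\lambda(u)\le 0\}<0$ (vacuous if the set is empty); the contrapositive of \eqref{eh1} gives $M\le 0$, and if $M=0$ a maximizing sequence in $S$ produces, via Lemma \ref{basic} and the same semicontinuity properties, a nonzero weak limit $u$ with $F(u)\ge 0$ and $H_\lambda(u)\le 0$, again contradicting \eqref{eh1}.

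For (2) and (3) I would invoke Proposition \ref{fiberingmaps} to pin down the signs of $H_\lambda$ and $F$ on the Nehari subsets: if $\gamma>p$ then $\mathcal{N}_\lambda^-\subset D_\lambda^+$ and $\mathcal{N}_\lambda^+\subset D_\lambda^-$, while if $\gamma<p$ these inclusions are reversed; moreover $H_\lambda(u)=F(u)$ for every $u\in\mathcal{N}_\lambda$ by Lemma \ref{neharisign}. Combining this with (1)(a) or (1)(b) and with the bounds $F(u)\le C_3\|u\|^\gamma$, $P_2(u)\le C_2\|u\|^p$, $P_1(u)\ge 0$, each of the four cases collapses to a one-variable inequality of the form $a\|u\|^p\le b\|u\|^\gamma$. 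For instance, when $\gamma>p$ and $u\in\mathcal{N}_\lambda^-$ one gets $C\|u\|^p\le H_\lambda(u)=F(u)\le C_3\|u\|^\gamma$, hence $\|u\|\ge(C/C_3)^{1/(\gamma-p)}$; when $\gamma>p$ and $u\in\mathcal{N}_\lambda^+$ one gets $-D\|u\|^\gamma\ge F(u)=H_\lambda(u)\ge-\lambda C_2\|u\|^p$, hence $\|u\|\le(\lambda C_2/D)^{1/(\gamma-p)}$; the cases $\gamma<p$ are symmetric, with the roles of (1)(a), (1)(b) and of $\mathcal{N}_\lambda^-$, $\mathcal{N}_\lambda^+$ interchanged. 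Finally, since $\Phi_\lambda(u)=\bigl(\tfrac1p-\tfrac1\gamma\bigr)H_\lambda(u)=\bigl(\tfrac1p-\tfrac1\gamma\bigr)F(u)$ on $\mathcal{N}_\lambda$, coercivity of $\Phi_\lambda$ over $\mathcal{N}_\lambda^-$ is immediate: $\Phi_\lambda(u)\ge\bigl(\tfrac1p-\tfrac1\gamma\bigr)C\|u\|^p$ when $\gamma>p$, and $\Phi_\lambda(u)\ge\bigl(\tfrac1\gamma-\tfrac1p\bigr)D\|u\|^\gamma$ when $\gamma<p$.

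I expect the only genuinely delicate step to be the strict positivity of $C$ and the strict negativity of $M$ in part (1); everything else is bookkeeping with homogeneity and elementary growth estimates. That step is exactly where the weak-compactness conclusion of Lemma \ref{basic} and the semicontinuity hypotheses on $P_1$, $P_2$, $F$ are indispensable --- without them a minimizing (resp.\ maximizing) sequence on $S$ could escape to a nonzero weak limit on which the sign dichotomy of \eqref{eh1} fails.
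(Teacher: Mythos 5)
Your proof is correct and follows essentially the same route as the paper's: part (1) by the same contradiction argument on the unit sphere using Lemma \ref{basic} together with the semicontinuity of $P_1$, $P_2$, $F$ and \eqref{eh1}, and parts (2)--(3) by the same sign analysis via Proposition \ref{fiberingmaps} combined with $H_\lambda=F$ on $\mathcal{N}_\lambda$ and the growth bounds. Your explicit treatment of the case $\lambda\le 0$ is a minor (and harmless) refinement the paper leaves implicit.
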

\begin{proof} \strut
\begin{enumerate}
\item  We prove only (a), since (b) is similar. Suppose, on the contrary, that there exists a sequence $(u_n)\subset S$ such that $H_\lambda(u_n)<1/n$ amd $F(u_n)\ge 0$ for all $n$. By Lemma \ref{basic} we can assume that $u_n\rightharpoonup u \neq 0$ in $X$. Therefore
	\begin{equation*}
	H_\lambda(u)\le \liminf H_\lambda(u_n)\le0 \le \limsup F(u_n)\le F(u),
	\end{equation*}
	which contradicts \eqref{eh1}. \\

\item	 Indeed, note that
	\begin{equation}\label{nehariin}
	H_\lambda(u)=F(u), \quad \forall u\in \mathcal{N}_\lambda,
	\end{equation}
	and thus
	\begin{equation}\label{neharenergy}
	\Phi_\lambda(u)=\left(\frac{\gamma-p}{p\gamma}\right)H_\lambda(u)=\left(\frac{\gamma-p}{p\gamma}\right)F(u),\quad  \forall u\in \mathcal{N}_\lambda.\\
	\end{equation}
	\medskip
	
	{\bf Case 1: $\gamma>p$.}
Recall that $F(u)>0$ and $H_\lambda(u)>0$ for all $u\in \mathcal{N}_\lambda^-$. By (1) we conclude from \eqref{nehariin} that  
	\begin{equation*}
	C\|u\|^p\le 	H_\lambda(u)=F(u)\le C_3\|u\|^\gamma, \quad \forall u\in \mathcal{N}_\lambda^-.
	\end{equation*}
	and since $\gamma>p$, it follows that there exists $c>0$ such that $c\le\|u\|$ for all $u\in \mathcal{N}_\lambda^-$. Clearly by \eqref{neharenergy} and (1) we also have that $\Phi_\lambda$ is coercive.\\
	
	{\bf Case 2: $\gamma<p$.} Recall that $F(u)<0$ and $H_\lambda(u)<0$ for all $u\in \mathcal{N}_\lambda^-$. By (1) and \eqref{nehariin} we infer that  
	\begin{equation*}
	-d\|u\|^p\le H_\lambda(u)=F(u)\le -C\|u\|^\gamma, \quad \forall u\in \mathcal{N}_\lambda^-,
	\end{equation*}
	where $d>0$. Since $\gamma<p$, we obtain the desired conclusion.\\ 
\item  

	{\bf Case 1: $\gamma>p$.}
	Recall that $F(u)<0$ and $H_\lambda(u)<0$ for all $u\in \mathcal{N}_\lambda^+$.
	By (1) we conclude from \eqref{nehariin} that  
	\begin{equation*}
	-c\|u\|^p\le H_\lambda(u)=F(u)\le -C\|u\|^\gamma,  \quad \forall u\in \mathcal{N}_\lambda^+,
	\end{equation*}
	where $c>0$ and since $\gamma>p$, we obtain the desired inequality.\\

	{\bf Case 2: $\gamma<p$.}
Recall that $F(u)>0$ and $H_\lambda(u)>0$ for all $u\in \mathcal{N}_\lambda^+$.
	By (1) we conclude from \eqref{nehariin} that  
	\begin{equation*}
	C\|u\|^p\le 	H_\lambda(u)=F(u)\le C_3\|u\|^\gamma, \quad \forall u\in \mathcal{N}_\lambda^-.
	\end{equation*}
	and since $\gamma<p$, it follows that there exists $C>0$ such that $\|u\|\le C$ for all $u\in \mathcal{N}_\lambda^+$.
	\end{enumerate}	
\end{proof}

\noindent\textbf{Proof of Theorem \ref{existenceouyang}.}\\

Let $\lambda<\lambda^*$. By Proposition \ref{neharinotempty} we have $\mathcal{N}_\lambda^0=\emptyset$ so that $\mathcal{N}_\lambda$ is a $C^1$ manifold. Note also that $\lambda^* \leq \mu^*$, so according to Proposition \ref{neharinotempty} we have $\mathcal{N}_\lambda^-\neq \emptyset $ if $\gamma>p$ and $\mathcal{N}_\lambda^+\neq \emptyset $ if $\gamma<p$. If, in addition, $\lambda>\mu^*$ then $\mathcal{N}_\lambda^+\neq \emptyset $ if $\gamma>p$ and $\mathcal{N}_\lambda^-\neq \emptyset $ if $\gamma<p$. It remains to show that $c_\lambda^+$ and $c_\lambda^-$ are achieved whenever $\mathcal{N}_\lambda^+$ and $\mathcal{N}_\lambda^-$ are nonempty, respectively:\\

	\begin{enumerate}
\item First we deal with $c_{\lambda}^-$. Proposition \ref{fiberingmaps} yields that $c_{\lambda}^-\ge 0$. Let $(u_n)\subset \mathcal{N}_\lambda^-$ be a minimizing sequence for $c_{\lambda}^-$. By Proposition \ref{coerciindefin}(2) we can assume that $u_n\rightharpoonup u$ in $X$. We claim that $u\neq 0$.  Indeed, note by Proposition \ref{coerciindefin}(1) that
	\begin{equation}\label{eq1}
	C\|u\|^p\le\liminf C\|u_n\|^p\le \liminf H_\lambda(u_n)=\liminf F(u_n)\le F(u).
	\end{equation}
	Thus if $u=0$, then $u_n \to 0$, which contradicts Proposition \ref{coerciindefin}(2). Hence $u\neq 0$. Now we consider two cases:\\

	{\bf Case 1: $\gamma>p$.}
	We claim that $u\in D_\lambda^+$. In fact, note that $F(u)\ge \limsup F(u_n)\ge 0$, which implies that $H_\lambda(u)>0$ by \eqref{eh1}. From
	\begin{equation}\label{eq2}
	H_\lambda(u)\le \liminf H_\lambda(u_n)=\liminf F(u_n)\le F(u)
	\end{equation}
	 it follows that $F(u)>0$. Thus  Proposition \ref{fiberingmaps} provides us with  $t_\lambda(u)>0$ such that $t_\lambda(u)u\in \mathcal{N}_\lambda^-$. Therefore
	\begin{equation*}
	\Phi_\lambda(t_\lambda(u)u)\le \liminf \Phi_\lambda(t_\lambda(u)u_n)\le \liminf \Phi_\lambda(u_n)=c_{\lambda}^-,
	\end{equation*}
	and so $	\Phi_\lambda(t_\lambda(u)u)=c_{\lambda}^-$.\\

	{\bf Case 2: $\gamma<p$.}
	We claim that $u\in D_\lambda^-$. Indeed, note that $H_\lambda(u)\le \liminf H_\lambda(u_n)\le 0$, which implies that $F(u)<0$, by \eqref{eh1}. It follows from \eqref{eq2} that $H_\lambda(u)<0$. Thus by Proposition \ref{fiberingmaps}, there exists $t_\lambda(u)>0$ such that $t_\lambda(u)u\in \mathcal{N}_\lambda^-$. Therefore
	\begin{equation*}
	\Phi_\lambda(t_\lambda(u)u)\le \liminf \Phi_\lambda(t_\lambda(u)u_n)\le \liminf \Phi_\lambda(u_n)=c_{\lambda}^-,
	\end{equation*}
	and hence $	\Phi_\lambda(t_\lambda(u)u)=c_{\lambda}^-$.\\
	
\item Let us consider now $c_{\lambda}^+$. Propositions \ref{fiberingmaps} and \ref{coerciindefin}(3) yield that $-\infty<c_{\lambda}^+< 0$. Let $(u_n)\subset \mathcal{N}_\lambda^+$ be a minimizing sequence for $c_{\lambda}^+$. By Proposition \ref{coerciindefin}(3) we can assume that $u_n\rightharpoonup u$ in $X$ and it is clear that $u\neq 0$.  Once again we consider two cases:\\

	{\bf Case 1: $\gamma>p$.}
	We claim that $u\in D_\lambda^-$. Indeed, note that $H_\lambda(u)\le \liminf H_\lambda(u_n)\le 0$, and \eqref{eh1} implies that $F(u)<0$. From \eqref{eq2} it follows that $H_\lambda(u)<0$. Thus from Proposition \ref{fiberingmaps}, there exists $t_\lambda(u)>0$ such that $t_\lambda(u)u\in \mathcal{N}_\lambda^+$. Therefore
	\begin{equation*}
	\Phi_\lambda(t_\lambda(u)u)\le\Phi_\lambda(u) \le\liminf \Phi_\lambda(u_n)=c_{\lambda}^+,
	\end{equation*}
	and hence $	\Phi_\lambda(t_\lambda(u)u)=c_{\lambda}^+$.\\
	
	{\bf Case 2: $\gamma<p$.}
	We claim that $u\in D_\lambda^+$. Indeed, note that $F(u)\ge \limsup F(u_n)\ge 0$, which implies that $H_\lambda(u)>0$. By \eqref{eq2} it follows that $F(u)>0$. Thus from Proposition \ref{fiberingmaps}, there exists $t_\lambda(u)>0$ such that $t_\lambda(u)u\in \mathcal{N}_\lambda^+$. Therefore
\begin{equation*}
\Phi_\lambda(t_\lambda(u)u)\le\Phi_\lambda(u) \le\liminf \Phi_\lambda(u_n)=c_{\lambda}^+,
\end{equation*}
and therefore $	\Phi_\lambda(t_\lambda(u)u)=c_{\lambda}^+$.\qed\\
	\end{enumerate}

We conclude this subsection showing that for $\mu_*<\lambda<\lambda^*$ the minimization procedure over $\mathcal{N}_\lambda^+$ and $\mathcal{N}_\lambda^-$ is equivalent to minimizing $H_\lambda$ under the constraints $F(u)=\pm 1$. The latter method has been used to deal with the problem \eqref{eo} in \cite{Ou} for $\gamma>p=2$, and in \cite{KRQU} for $\gamma<p$.

\begin{lemma}\label{decreasingfibering}
	Let $\mu_*<\lambda<\lambda^*$ and $
	m^{\pm}_{\lambda}:=\inf\left\{H_\lambda(u): u \in X, F(u)=\pm 1 \right\}$. Then:
	\begin{enumerate}
		\item  $m_\lambda^{\pm}$ are achieved, and $m_\lambda^+>0>m_\lambda^-$.
		\item There holds \begin{equation}\label{cm}
		c_\lambda^+=
		\begin{cases}
		\frac{p-\gamma}{p\gamma}(-m_\lambda^-)^{\frac{\gamma}{\gamma-p}} & \mbox{ if } \gamma>p,\\
		\frac{\gamma-p}{p\gamma}(m_\lambda^+)^{\frac{\gamma}{\gamma-p}} & \mbox{ if }\gamma<p.
		\end{cases}
		\end{equation}
		and 
		\begin{equation}\label{cm2}
		c_\lambda^-=
		\begin{cases}
		\frac{\gamma-p}{p\gamma}(m_\lambda^+)^{\frac{\gamma}{\gamma-p}} & \mbox{ if } \gamma>p,\\
		\frac{p-\gamma}{p\gamma}(-m_\lambda^-)^{\frac{\gamma}{\gamma-p}} & \mbox{ if }\gamma<p.
		\end{cases}
		\end{equation}
		\item The maps $\lambda \mapsto m_\lambda^{\pm}$ are concave (therefore continuous) and decreasing in $(\mu_*,\lambda^*)$. In particular, the maps $\lambda \mapsto c_\lambda^{\pm}$ are decreasing and continuous in $(\mu_*,\lambda^*)$.
	\end{enumerate}
\end{lemma}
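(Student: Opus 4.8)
The plan is to handle the three items in turn; item~(1) is the substantive part, while items~(2) and (3) are essentially bookkeeping built on Propositions~\ref{fiberingmaps} and \ref{coerciindefin}.

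For item~(1) I would first note that the constraint sets $\{F=\pm1\}$ are nonempty (rescale $u_1,u_2$ by $\gamma$-homogeneity). For $m_\lambda^+$: Proposition~\ref{coerciindefin}(1)(a) gives $H_\lambda\ge C\|\cdot\|^p$ on $\{F=1\}$, so any minimizing sequence is bounded, and $m_\lambda^+>0$ (it is $\ge0$ by the same bound, and $=0$ would force a minimizing sequence to converge to $0$ in $X$, contradicting $1=F(u_n)\le C_3\|u_n\|^\gamma$). Taking a weak limit $u_n\rightharpoonup u$, weak lower semicontinuity of $H_\lambda$ (from that of $P_1$ and weak continuity of $P_2$) and weak upper semicontinuity of $F$ give $H_\lambda(u)\le m_\lambda^+$ and $F(u)\ge1$; rescaling to $\widetilde u:=F(u)^{-1/\gamma}u\in\{F=1\}$, the factor $F(u)^{-p/\gamma}\le1$ together with $m_\lambda^+>0$ yields $m_\lambda^+\le H_\lambda(\widetilde u)=F(u)^{-p/\gamma}H_\lambda(u)\le m_\lambda^+$, so $m_\lambda^+$ is attained. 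For $m_\lambda^-$: since $\lambda>\mu_*$ there is $w$ with $F(w)<0$ and $H_\lambda(w)<0$, so (rescaling) $m_\lambda^-<0$; a minimizing sequence then eventually lies in $\{H_\lambda<0\}$, where Proposition~\ref{coerciindefin}(1)(b) gives $-1=F(u_n)\le-D\|u_n\|^\gamma$ and hence boundedness. For a weak limit $u$ one gets $H_\lambda(u)\le m_\lambda^-<0$ and $F(u)\ge-1$; by \eqref{eh1}, $H_\lambda(u)<0$ forces $F(u)<0$, so $F(u)\in[-1,0)$, and if $F(u)\in(-1,0)$ then $v:=(-1/F(u))^{1/\gamma}u\in\{F=-1\}$ would satisfy $H_\lambda(v)=(-1/F(u))^{p/\gamma}H_\lambda(u)<H_\lambda(u)\le m_\lambda^-$, a contradiction; hence $F(u)=-1$ and $H_\lambda(u)=m_\lambda^-$. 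This constraint-recovery step — ruling out $F(u)>-1$ at the weak limit, using \eqref{eh1} to pin down the sign of $F(u)$ — is the only place I expect real friction.

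For item~(2) I would start from Lemma~\ref{neharisign} and \eqref{neharenergy}: whenever $H_\lambda(u)F(u)>0$, $t_\lambda(u)u\in\mathcal{N}_\lambda$ and, substituting $t_\lambda(u)=(H_\lambda(u)/F(u))^{1/(\gamma-p)}$,
\[
\Phi_\lambda(t_\lambda(u)u)=\frac{\gamma-p}{p\gamma}\,H_\lambda(u)\Bigl(\frac{H_\lambda(u)}{F(u)}\Bigr)^{\frac{p}{\gamma-p}}.
\]
By Proposition~\ref{fiberingmaps}, for $\gamma>p$ the sets $\mathcal{N}_\lambda^-$ and $\mathcal{N}_\lambda^+$ are the images of $D_\lambda^+$ and $D_\lambda^-$ under $u\mapsto t_\lambda(u)u$ (conversely for $\gamma<p$); since $D_\lambda^\pm$ are cones I may restrict the parametrizing set to $\{F=1\}$, resp.\ $\{F=-1\}$ (on $\{F=1\}$ the requirement $H_\lambda>0$ is automatic by Proposition~\ref{coerciindefin}(1)(a)). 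Putting $F(u)=\pm1$ in the display reduces $c_\lambda^{\pm}$ to minimizing $H_\lambda^{\gamma/(\gamma-p)}$, resp.\ $(-H_\lambda)^{\gamma/(\gamma-p)}$, over the constraint; since $t\mapsto t^{\gamma/(\gamma-p)}$ is monotone on $(0,\infty)$ and $m_\lambda^{\pm}$ are attained (so the extra open condition $H_\lambda<0$ on $\{F=-1\}$ does not change the infimum, the minimizer already having $H_\lambda=m_\lambda^-<0$), the identities \eqref{cm} and \eqref{cm2} follow; tracking the signs of $(\gamma-p)/(p\gamma)$ and of the exponent in the regimes $\gamma>p$ and $\gamma<p$ produces the four cases.

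For item~(3), for fixed $u$ in $\{F=\pm1\}$ the map $\lambda\mapsto H_\lambda(u)=P_1(u)-\lambda P_2(u)$ is affine, so $m_\lambda^{\pm}$ is an infimum of affine functions, hence concave on $(\mu_*,\lambda^*)$ and, being finite there by item~(1), continuous. As $P_2\ge0$ these affine maps are non-increasing, so $m_\lambda^{\pm}$ is non-increasing; it is strictly decreasing for $m_\lambda^-$, because a minimizer $u_\lambda$ has $\lambda P_2(u_\lambda)>P_1(u_\lambda)\ge C_1\|u_\lambda\|^p>0$ (as $H_\lambda(u_\lambda)<0$ and $\lambda>\mu_*\ge0$), whence $P_2(u_\lambda)>0$ and $m_{\lambda_2}^-\le H_{\lambda_2}(u_{\lambda_1})=m_{\lambda_1}^--(\lambda_2-\lambda_1)P_2(u_{\lambda_1})<m_{\lambda_1}^-$ for $\lambda_1<\lambda_2$; the corresponding statement for $m_\lambda^+$ reduces to its minimizers having $P_2>0$, which is immediate in the applications, where $P_2>0$ on $X\setminus\{0\}$. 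Finally, by item~(2) each $c_\lambda^{\pm}$ is $\lambda\mapsto m_\lambda^{\pm}$ composed with a sign and a continuous strictly monotone power, so $c_\lambda^{\pm}$ is continuous and decreasing on $(\mu_*,\lambda^*)$.
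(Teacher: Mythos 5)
Your proof is correct and follows essentially the same strategy as the paper's: the direct method with a rescaling to recover the constraints $F=\pm1$ for item (1), the fibering-map identity $\Phi_\lambda(t_\lambda(u)u)=\frac{\gamma-p}{p\gamma}H_\lambda(u)\bigl(H_\lambda(u)/F(u)\bigr)^{p/(\gamma-p)}$ for item (2), and concavity as a pointwise infimum of affine functions for item (3). Two minor differences are worth recording. In item (2) you reparametrize $\mathcal{N}_\lambda^\pm$ over the normalized sets $\{F=\pm1\}$ and conclude by monotonicity of $t\mapsto t^{\gamma/(\gamma-p)}$, whereas the paper proves the two inequalities separately, using a minimizer $u_\lambda$ of $c_\lambda^+$ supplied by Theorem \ref{existenceouyang}; your version is equivalent but does not need that attainment result. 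In item (3) you correctly isolate the point the paper's proof passes over in silence: the strict inequality $m_{\lambda'}^{\pm}\le H_{\lambda'}(v_\lambda)<H_\lambda(v_\lambda)=m_\lambda^{\pm}$ requires $P_2(v_\lambda)>0$, which for a minimizer of $m_\lambda^-$ follows from $H_\lambda(v_\lambda)<0$ and $P_1\ge C_1\|\cdot\|^p$, but for $m_\lambda^+$ is not a consequence of the listed basic hypotheses alone (only $P_2\ge0$ is assumed); so for $m_\lambda^+$ one gets non-increase in general and strict decrease under the extra condition $P_2>0$ on $X\setminus\{0\}$, which holds in all the applications. This is a genuine (small) gap in the paper's own argument rather than in yours.
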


\begin{proof}\strut
	\begin{enumerate}
		\item Let us first show that $m^{\pm}_{\lambda}$ are finite. Indeed, if $(u_n) \subset X$ is such that
		$H_\lambda(u_n) \to -\infty$ and $F(u_n)=\pm 1$ then $(u_n)$ is unbounded, so we can assume that $\|u_n\| \to \infty$ and $v_n:=\frac{u_n}{\|u_n\|} \rightharpoonup v$. From $H_\lambda(v_n)\leq 0$ we have $v \neq 0$ and $H_\lambda(v)\leq 0 \leq F(v)$, which contradicts \eqref{eh1}. Thus $m^{\pm}_{\lambda}$ are both finite. The previous discussion also shows that any minimizing sequence $(u_n)$ for $m^{\pm}_{\lambda}$ is bounded, so we can assume that $u_n \rightharpoonup u$ in $X$. If $H_\lambda(u_n) \to m_\lambda^+$ and $F(u_n)=1$ then $H_\lambda(u)\leq m_\lambda^+$ and $F(u)\ge 1$. Hence $\tilde{u}:=\frac{u}{F(u)^{\frac{1}{\gamma}}}$ satisfies $F(\tilde{u})=1$ and $H_\lambda(\tilde{u})\leq H_\lambda(u)\leq m_\lambda^+$, i.e. it achieves $m_\lambda^+$. A similar argument shows that $m_\lambda^-$ is also achieved. Moreover, from \eqref{eh1} it is clear that  $m_\lambda^+>0$. Note also that $m_\lambda^-<0$ if and only if there exists $u \in X$ such that $F(u)=-1$ and $H_\lambda(u)<0$, which is equivalent to have $\lambda>\frac{P_1(u)}{P_2(u)}$ for some $u \in X$ such that $F(u)<0$, and this holds since $\lambda>\mu_*$.\\
		
		\item We prove \eqref{cm} for $\gamma>p$ (the case $\gamma<p$ and \eqref{cm2} are similar). Let $v_\lambda$ achieve $m_\lambda^-$, i.e. $H_\lambda(v_\lambda)=m_\lambda^-<0$ and $F(v_\lambda)=-1$. Thus $v_\lambda \in D_\lambda^-$ and $t_\lambda(v_\lambda)=(-m_\lambda^-)^{\frac{1}{\gamma-p}}$, so
		$$c_\lambda^+\leq \Phi_\lambda(t_\lambda(v_\lambda)v_\lambda)=\frac{1}{p}t_\lambda(v_\lambda)^p m_\lambda^- +
		\frac{1}{\gamma}t_\lambda(v_\lambda)^{\gamma}=\frac{p-\gamma}{p\gamma}(-m_\lambda^-)^{\frac{\gamma}{\gamma-p}}.$$
		On the other hand, since  $H(u_\lambda)=F(u_\lambda)<0$ we have $$m_\lambda^-\leq H_\lambda \left(\frac{u_\lambda}{(-F(u_\lambda))^{\frac{1}{\gamma}}}\right)=-(-H(u_\lambda))^{\frac{\gamma-p}{\gamma}}.$$
		It follows that $(-m_\lambda^-)^{\frac{\gamma}{\gamma-p}}\geq -H(u_\lambda)$,
		so that $c_\lambda^+=-\frac{p-\gamma}{p\gamma}H(u_\lambda)\geq \frac{p-\gamma}{p\gamma} (-m_\lambda^-)^{\frac{\gamma}{\gamma-p}}$.\\
		
		\item The concavity of $\lambda \mapsto m_\lambda^{\pm}$ follows from the fact that $m_\lambda^{\pm}$ are pointwise infima of $H_\lambda(u)$, which is affine (therefore concave) with respect to $\lambda$. Let $\mu_*<\lambda<\lambda'<\lambda^*$ and $v_\lambda$ achieve $m_\lambda^{\pm}$. Then 
		$$m_{\lambda'}^{\pm} \leq H_{\lambda'}(v_\lambda)=H_{\lambda}(v_\lambda)+(\lambda-\lambda')P_2(v_\lambda)<H_{\lambda}(v_\lambda)=m_{\lambda}^{\pm}.$$
		The assertions on $\lambda \mapsto c_\lambda^+$ follow from \eqref{cm}.\\
	\end{enumerate}	
\end{proof}

\subsection{Minimization at $\lambda^*$}

We start proving that $\lambda^*$ is achieved and provides us with a critical point of $\Phi_{\lambda^*}$ with zero energy if the following conditions are satisfied:\\
\begin{itemize}
	\item[(C1):] If  $\lambda^*$ is achieved by $u$, then $F'(u)\neq 0$.
	\item[(C2):] If $\lambda^*$ is achieved by $u$, then $H_{\lambda^*}'(u)\neq 0$.\\
\end{itemize}

\noindent\textbf{Proof of Theorem \ref{existencelambda^*} (1).}\\

First we show that $\lambda^*$ is achieved under (H1).
Since $P_1/P_2$ is $0$-homogeneous, we can find a minimizing sequence for $\lambda^*$ in $S$. By the weak (semi)continuity properties of $P_1$, $P_2$ and $F$, it follows that $\lambda^*$ is achieved by some $u$.
If $F(u)>0$ then $u$ is a local minimizer of $P_1/P_2$ over $X$, and consequently a critical point of this quotient, so that $P_1'(u)-\frac{P_1(u)}{P_2(u)}P_2'(u)=0$, i.e. $H'_{\lambda^*}(u)=0$, which contradicts $(C2)$. Thus $F(u)=0$.  By (C1) we may apply Lagrange's multiplier rule to find some $\alpha \in \mathbb{R}$ such that
	\begin{equation*}
	\left(\frac{P_1(u)}{P_2(u)}\right)'=\alpha F'(u),
	\end{equation*}
	which implies that
	\begin{equation}\label{compatu}
	H_{\lambda^*}'(u)=P_2(u)\alpha F'(u).
	\end{equation}
 $(C2)$ yields that $\alpha\neq 0$. We claim that $\alpha>0$. Otherwise  there exists $v\in X$ such that 	$H_{\lambda^*}'(u)v<0< F'(u)v$.
	Thus
	$$
	\lim_{h\to 0^+}\frac{H_{\lambda^*}(u+hv)}{h}=\lim_{h\to 0^+}\frac{H_{\lambda^*}(u+hv)-H_{\lambda^*}(u)}{h}<0$$
	and $$\lim_{h\to 0^+}\frac{F(u+hv)}{h}=\lim_{h\to 0^+}\frac{F(u+hv)-F(u)}{h}>0,$$
	which implies that there exists $\delta>0$ such that $H_{\lambda^*}(u+\delta v)<0<F(u+\delta v)$ and hence, for $\lambda<\lambda^*$ and close to $\lambda^*$ we conclude that $H_{\lambda}(u+\delta v)<0$ and $F(u+\delta v)>0$, a contradiction with \eqref{eh1}. Therefore $\alpha>0$  and setting $t=(\alpha P_2(u))^{-1/\gamma}$ we conclude from \eqref{compatu} that $tu$ is a critical point of $\Phi_{\lambda^*}$. Moreover it is clear that $tu\in \mathcal{N}_{\lambda^*}^0$ and $\Phi_{\lambda^*}(tu)=0$.\qed\\

Let us set $$\Theta_\lambda=\{u/\|u\|:u \in\mathcal{N}_\lambda^+\}.$$ From Proposition \ref{neharinotempty} we have $\Theta_\lambda\neq \emptyset$ for $\lambda>\mu_*$ if $\gamma>p$, and for $\lambda<\mu^*$ if $\gamma<p$. 
\begin{lemma}\label{projectionnehari} \strut
	\begin{enumerate}
		\item If $\gamma>p$ then $\Theta_\lambda$ is increasing with respect to $\lambda$, i.e.  $\Theta_\lambda \subset\Theta_{\lambda'}$ for $\mu_*<\lambda<\lambda'$.
			\item If $\gamma<p$ then $\Theta_\lambda$ is decreasing with respect to $\lambda$, i.e. $\Theta_{\lambda'} \subset\Theta_{\lambda}$ for  $\lambda<\lambda'<\mu^*$.
	\end{enumerate}
\end{lemma}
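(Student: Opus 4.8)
The plan is to reduce the statement to a monotonicity property of the cones $D_\lambda^{\pm}$. First I would observe that, since $D_\lambda^{+}$ and $D_\lambda^{-}$ are cones and, by Proposition \ref{fiberingmaps}, $\mathcal{N}_\lambda^+=\{t_\lambda(u)u: u\in D_\lambda^-\}$ when $\gamma>p$ (resp. $\mathcal{N}_\lambda^+=\{t_\lambda(u)u: u\in D_\lambda^+\}$ when $\gamma<p$) with $t_\lambda(u)>0$, the normalization map $v\mapsto v/\|v\|$ identifies $\Theta_\lambda$ with $S\cap D_\lambda^-$ if $\gamma>p$, and with $S\cap D_\lambda^+$ if $\gamma<p$. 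Checking this is routine: given $v\in\mathcal{N}_\lambda^+$, $v/\|v\|$ is a positive multiple of a point of the relevant cone, hence lies in that cone; conversely, any $w\in S$ in the relevant cone can be rescaled by $t_\lambda(w)>0$ to land in $\mathcal{N}_\lambda^+$, and $w=t_\lambda(w)w/\|t_\lambda(w)w\|\in\Theta_\lambda$.

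Next, for $\gamma>p$ and $\mu_*<\lambda<\lambda'$, I would show $D_\lambda^-\subset D_{\lambda'}^-$: the condition $F(u)<0$ does not involve the parameter, while $H_\lambda(u)=P_1(u)-\lambda P_2(u)<0$ together with $P_2(u)\ge 0$ and $\lambda<\lambda'$ gives $H_{\lambda'}(u)=H_\lambda(u)-(\lambda'-\lambda)P_2(u)\le H_\lambda(u)<0$. Intersecting with $S$ then yields $\Theta_\lambda\subset\Theta_{\lambda'}$. The case $\gamma<p$, $\lambda<\lambda'<\mu^*$, is symmetric: one checks $D_{\lambda'}^+\subset D_\lambda^+$, because $H_{\lambda'}(u)>0$, $P_2(u)\ge 0$ and $\lambda<\lambda'$ force $H_\lambda(u)=H_{\lambda'}(u)+(\lambda'-\lambda)P_2(u)\ge H_{\lambda'}(u)>0$, whence $\Theta_{\lambda'}\subset\Theta_\lambda$.

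I do not expect a serious obstacle here. The only point requiring a little care is the identification $\Theta_\lambda=S\cap D_\lambda^{\mp}$, which hinges on the cone structure of $D_\lambda^{\pm}$ and on the strict positivity of $t_\lambda$, both already available from Lemma \ref{neharisign} and Proposition \ref{fiberingmaps}; once this is in place, the conclusion follows purely from the affineness of $\lambda\mapsto H_\lambda(u)$ and the sign of $P_2(u)$.
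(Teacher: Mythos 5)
Your proposal is correct and follows essentially the same route as the paper: the paper also takes $v\in\Theta_\lambda$, notes $H_\lambda(v)<0$ and $F(v)<0$ (resp.\ $>0$), and concludes $H_{\lambda'}(v)<0$ (resp.\ $H_\lambda(v)>0$) from $P_2\ge 0$, implicitly using the identification of $\Theta_\lambda$ with $S\cap D_\lambda^{\mp}$ that you spell out. Your version merely makes that identification explicit, which is a harmless (and arguably welcome) addition.
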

\begin{proof} Let $\gamma>p$ and $\mu_*<\lambda<\lambda'$. If $v\in \Theta_\lambda$, then $H_\lambda(v)<0$ and $F(v)<0$. Thus $H_{\lambda'}(v)<0$ and consequently $v\in \Theta_{\lambda'}$. The proof of $(2)$ is similar (recall that $H_{\lambda'}(v)>0$ in this case).
	
\end{proof}

%
\begin{lemma}\label{limit} Let $\lambda_n\nearrow \lambda^*$ and $u_n\in \mathcal{N}_{\lambda_n}^+$ be such that $\Phi_{\lambda_n}(u_n)=c_{\lambda_n}^+$. If $\gamma>p$ and $u_n \rightharpoonup u\in\mathcal{D}_{\lambda^*}^-$ (respect. $\gamma<p$ and $u_n \rightharpoonup u\in\mathcal{D}_{\lambda^*}^+$) then $u \in \mathcal{N}_{\lambda^*}^+$, $\Phi_{\lambda^*}(u)=c_{\lambda^*}^+$ and $\Phi_{\lambda^*}'(u)=0$. 
\end{lemma}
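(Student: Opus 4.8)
\emph{Proof plan.}
The plan is to establish first that $\Phi_{\lambda^*}(u)=c_{\lambda^*}^+$ and that the fibering parameter $t_{\lambda^*}(u)$ equals $1$ — so that $u\in\mathcal{N}_{\lambda^*}^+$ is a minimizer of $\Phi_{\lambda^*}$ over $\mathcal{N}_{\lambda^*}^+$ — and then to deduce $\Phi_{\lambda^*}'(u)=0$ from the fact that $\mathcal{N}_{\lambda^*}^+$ is a $C^1$ manifold and critical points of $\Phi_{\lambda^*}$ restricted to it are critical points of $\Phi_{\lambda^*}$ (the principle recalled in the Introduction). I describe the case $\gamma>p$; the case $\gamma<p$ is identical after replacing $D_{\lambda^*}^-$ by $D_{\lambda^*}^+$ throughout. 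Everything reduces to the chain
$$c_{\lambda^*}^+\ \le\ \Phi_{\lambda^*}(t_{\lambda^*}(u)u)\ \le\ \Phi_{\lambda^*}(u)\ \le\ \liminf_n c_{\lambda_n}^+\ \le\ \limsup_n c_{\lambda_n}^+\ \le\ c_{\lambda^*}^+ ,$$
which forces all terms to coincide.

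The first two inequalities come from the hypothesis $u\in D_{\lambda^*}^-$: Proposition \ref{fiberingmaps} provides $t:=t_{\lambda^*}(u)>0$ with $tu\in\mathcal{N}_{\lambda^*}^+$ (so also $\mathcal{N}_{\lambda^*}^+\neq\emptyset$ and $c_{\lambda^*}^+\le\Phi_{\lambda^*}(tu)$), and $t$ is the unique global minimum point of $\varphi_{\lambda^*,u}$ over $[0,\infty)$, whence $\Phi_{\lambda^*}(tu)=\varphi_{\lambda^*,u}(t)\le\varphi_{\lambda^*,u}(1)=\Phi_{\lambda^*}(u)$. For the third inequality I note that $\Phi_{\lambda^*}=\tfrac1pP_1-\tfrac{\lambda^*}{p}P_2-\tfrac1\gamma F$ is weakly lower semicontinuous (since $P_1$ is weakly l.s.c., $P_2$ weakly continuous, $F$ weakly u.s.c.), and that, because $u_n\rightharpoonup u$, $(u_n)$ is bounded and $P_2(u_n)\to P_2(u)$,
$$\Phi_{\lambda^*}(u)\ \le\ \liminf_n\Phi_{\lambda^*}(u_n)\ =\ \liminf_n\Big(\Phi_{\lambda_n}(u_n)+\tfrac1p(\lambda_n-\lambda^*)P_2(u_n)\Big)\ =\ \liminf_n c_{\lambda_n}^+ ,$$
using the identity $\Phi_{\lambda^*}(u_n)-\Phi_{\lambda_n}(u_n)=\tfrac1p(\lambda_n-\lambda^*)P_2(u_n)\to0$ together with $\Phi_{\lambda_n}(u_n)=c_{\lambda_n}^+$. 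The growth bounds on $P_1,P_2,F$ and the boundedness of $(u_n)$ moreover yield $\liminf_n c_{\lambda_n}^+>-\infty$, so all these quantities are finite.

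The last inequality is the crux: it amounts to a one-sided upper semicontinuity of $\lambda\mapsto c_\lambda^+$ at the endpoint $\lambda^*$, which is not covered by Lemma \ref{decreasingfibering} (that result only gives continuity on the open interval). I would obtain it by pulling competitors back. Given $w\in\mathcal{N}_{\lambda^*}^+$, so $H_{\lambda^*}(w)<0$ and $F(w)<0$, the identity $H_{\lambda_n}(w)=H_{\lambda^*}(w)+(\lambda^*-\lambda_n)P_2(w)$ shows $H_{\lambda_n}(w)<0$ for $n$ large, hence $w\in D_{\lambda_n}^-$ and $t_{\lambda_n}(w)w\in\mathcal{N}_{\lambda_n}^+$ by Proposition \ref{fiberingmaps}, with $t_{\lambda_n}(w)=(H_{\lambda_n}(w)/F(w))^{1/(\gamma-p)}\to t_{\lambda^*}(w)=1$; thus $c_{\lambda_n}^+\le\Phi_{\lambda_n}(t_{\lambda_n}(w)w)\to\Phi_{\lambda^*}(w)$, and taking the infimum over $w\in\mathcal{N}_{\lambda^*}^+$ gives $\limsup_n c_{\lambda_n}^+\le c_{\lambda^*}^+$. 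With the chain closed, $\Phi_{\lambda^*}(tu)=\Phi_{\lambda^*}(u)=c_{\lambda^*}^+$; since $\varphi_{\lambda^*,u}$ attains its minimum only at $t$ while $\varphi_{\lambda^*,u}(1)$ also equals that minimum, it follows that $t=1$, so $u\in\mathcal{N}_{\lambda^*}^+$ with $\Phi_{\lambda^*}(u)=c_{\lambda^*}^+$, and finally $\Phi_{\lambda^*}'(u)=0$ as explained above.

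The step I expect to be the main obstacle is precisely $\limsup_n c_{\lambda_n}^+\le c_{\lambda^*}^+$: one must control the sign of $H_{\lambda_n}(w)$ for a competitor $w\in\mathcal{N}_{\lambda^*}^+$ under the perturbation $\lambda_n\to\lambda^*$. This is automatic when $\gamma<p$ (there $H_{\lambda_n}(w)\ge H_{\lambda^*}(w)>0$), but for $\gamma>p$ it only holds for $n$ large, so one argues along the tail of the sequence; a minor auxiliary point is the finiteness of $\liminf_n c_{\lambda_n}^+$, which follows from the uniform boundedness of $(u_n)$ and the structural bounds on $P_1,P_2,F$.
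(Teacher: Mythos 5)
Your proof is correct and follows essentially the same route as the paper: both rest on the weak lower semicontinuity of $\Phi_{\lambda^*}$ combined with the vanishing perturbation $\tfrac1p(\lambda_n-\lambda^*)P_2(u_n)$, and on pulling a competitor from $\mathcal{N}_{\lambda^*}^+$ back to $\mathcal{N}_{\lambda_n}^+$ via the continuity of $t_\lambda(\cdot)$ in $\lambda$. The only difference is organizational — the paper argues by contradiction with a single near-optimal competitor $v\in\Theta_{\lambda^*}$, while you prove $\limsup_n c_{\lambda_n}^+\le c_{\lambda^*}^+$ directly by taking the infimum over all competitors — and your explicit justification that $t_{\lambda^*}(u)=1$ (uniqueness of the minimum of $\varphi_{\lambda^*,u}$) is a welcome detail the paper leaves implicit.
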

\begin{proof} It suffices to show that $\Phi_{\lambda^*}(u)=c_{\lambda^*}^+$. Since $t_{\lambda^*}(u)$ is the global minimum point of $\varphi_{\lambda^*,u}$ we have $\Phi_{\lambda^*}(u)\geq \Phi_{\lambda^*}(t_{\lambda^*}(u)u) \ge c_{\lambda^*}^+$. Suppose, by contradiction, that $\Phi_{\lambda^*}(u)>c_{\lambda^*}^+$.
		\vskip.3cm
	{\bf Case 1: $\gamma>p$}
	\vskip.3cm
	Given $\alpha\in (c_{\lambda^*}^+,\Phi_{\lambda^*}(u))$, we choose $v\in \Theta_{\lambda^*}$ such that $c_{\lambda^*}^+\le \Phi_{\lambda^*}(t_{\lambda^*}(v)v)< \alpha$. By continuity there exists $\delta>0$ such that $\Phi_{\lambda}(t_{\lambda}(v)v)<\alpha$ for all $\lambda\in (\lambda^*-\delta,\lambda^*)$. Therefore, for $n$ sufficiently large we have 
	\begin{equation*}
	 \Phi_{\lambda_n}(u_n)=c_{\lambda_n}^+\le \Phi_{\lambda_n}(t_{\lambda_n}(v)v)<\alpha <\Phi_{\lambda^*}(u),
	\end{equation*}
	which contradicts the fact that $\Phi_{\lambda^*}(u)\leq \liminf  \Phi_{\lambda_n}(u_n)$.\\

		{\bf Case 2: $\gamma<p$}
	\vskip.3cm
	Given $\alpha\in (c_{\lambda^*}^+,\Phi_{\lambda^*}(u))$, choose $v\in \Theta_{\lambda^*}$ such that $c_{\lambda^*}^+ \le \Phi_{\lambda^*}(t_{\lambda^*}(v)v)< \alpha$. Take $\delta>0$ such that $t_\lambda(v)$ is well defined and $\Phi_{\lambda}(t_{\lambda}(v)v)<\alpha$ for $\lambda\in (\lambda^*-\delta,\lambda^*)$. One may argue as in the previous case to reach a contradiction.
	
\end{proof}

We are now in position to provide the\\

\noindent\textbf{Proof of Theorem \ref{existencelambda^*} (2).}\\

 Choose a sequence $\lambda_n\nearrow \lambda^*$ and $u_n\in \mathcal{N}_{\lambda_n}^+$ such that $\Phi_{\lambda_n}(u_n)=c_{\lambda_n}^+$ (which exists by Theorem \ref{existenceouyang}). We write $u_n=t_nw_n$, where $w_n\in \Theta_{\lambda_n}$ and $t_n:=t_{\lambda_n}(w_n)$ is given by
	\begin{equation}\label{eq3}
	t_n=\left(\frac{H_{\lambda_n}(w_n)}{F(w_n)}\right)^{\frac{1}{\gamma-p}}.
	\end{equation}
	Note also that by Lemma \ref{decreasingfibering}(3) there exists $c<0$ such that
	\begin{equation}\label{functional}
 c>c_{\lambda_n}^+=\Phi_{\lambda_n}(t_nw_n)= \pm\frac{\gamma-p}{p\gamma}t_n^p|H_{\lambda_n}(w_n)|,
	\end{equation}
		where the sign $-$ is for the case $\gamma>p$ and the sign $+$ corresponds to $\gamma<p$. Let us show that $u_n \rightharpoonup u\in\mathcal{N}_{\lambda^*}^+$, so that Lemma \ref{limit} yields the desired conclusion:
		\vskip.3cm
	{\bf Case 1: $\gamma>p$}
	\vskip.3cm
	Since $H_{\lambda_n}(w_n)<0$, by Lemma \ref{basic} we can assume that  $w_n\rightharpoonup w \neq 0$ in $X$. We claim that $(t_n)$ is bounded and away from zero. Indeed, suppose by contradiction that, up to a subsequence $t_n \to \infty$. Then, by \eqref{eq3}, we conclude that $F(w_n)\to 0$ and hence $F(w)\ge\limsup F(w_n)= 0$. Since $H_{\lambda^*}(w)\le\liminf H_{\lambda_n}(w_n)\le 0$, we deduce from \eqref{eh11} that $H_{\lambda^*}(w)=0$. It follows from Theorem \ref{existencelambda^*}(1)  that $H_{\lambda^*}(w)=F(w)=0$ and $w$ achieves $\lambda^*$. Moreover since 
	\begin{equation*}
	0= H_{\lambda^*}(w)\le \liminf H_{\lambda_n}(w_n)\le 0,
	\end{equation*}
	 we have $H_{\lambda_n}(w_n)\to 0$, so $P_1(w_n) \to P_1(w)$. Hence, by (S2) we obtain $w_n\to w$ in $X$,  and from
		\begin{equation*}
	t_n^{p-1}H'_{\lambda_n}(w_n)=t_n^{\gamma-1} F'(w_n),\quad \forall n,
	\end{equation*}
	we  infer that $F'(w)=0$, which contradicts $(C1)$. So $(t_n)$ is bounded. Now it is clear from \eqref{functional} that $t_n \not \to0$, so the claim is proved and we can suppose that $t_n \to t\in (0,\infty)$. 
	Thus $u_n=t_nw_n \rightharpoonup u:=tw$.
	Note again by \eqref{functional} that $H_{\lambda^*}(u)=t^pH_{\lambda^*}(w)\leq t^p\liminf H_{\lambda_n}(w_n)<0 $ and consequently $F(u)<0$, i.e. $u \in \mathcal{D}_{\lambda^*}^-$. The conclusion follows from Lemma \ref{limit}.
	\vskip.3cm
		{\bf Case 2: $\gamma<p$}
	\vskip.3cm
	 Let us show again that $(t_n)$ is bounded and away from zero. Indeed, suppose that up to a subsequence $t_n \to \infty$, so that by \eqref{eq3} we conclude that $H_{\lambda_n}(w_n)\to 0$ and thus by Lemma \ref{basic} we can assume that $w_n\rightharpoonup w \neq 0$ in $X$. Since $F(w)\ge\limsup F(w_n)= 0$ and $H_{\lambda^*}(w)\le\liminf_{n\to \infty}H_{\lambda_n}(w_n)\le 0$, we deduce from \eqref{eh11} and Theorem \ref{existencelambda^*}(1)  that $H_{\lambda^*}(w)=F(w)=0$ and therefore $w$ achieves $\lambda^*$. Moreover $w_n\to w$ in $X$ and since
\begin{equation*}
t_n^{p-1}H'_{\lambda_n}(w_n)=t_n^{\gamma-1} F'(w_n),  \quad \forall n,
\end{equation*}
we  conclude that $H_{\lambda^*}'(w)=0$, which contradicts $(C2)$. Thus $(t_n)$ is bounded, 
and the proof can then be concluded as in the previous case.

\subsection{Local Minimization Beyond $\lambda^*$} In this Section we look for critical points of $\Phi_\lambda$ for $\lambda>\lambda^*$. We start with the following observation, which shows that $c_\lambda^+$ (for $\gamma>p$) and $c_\lambda^-$ (for $\gamma<p$) are no longer achieved. 
\begin{lemma}\label{unboundedenergy} Suppose (H1), (C1), and (C2). 
	\begin{enumerate}
		\item If $\gamma>p$, then  $c_\lambda^+=-\infty$ for all $\lambda > \lambda^*$.
		\item If $\gamma<p$, then  $c_\lambda^-=0$ for all $\lambda> \lambda^*$.
	\end{enumerate}
\end{lemma}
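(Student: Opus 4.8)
The plan is to produce, for each fixed $\lambda>\lambda^*$, a family of elements of $\mathcal{N}_\lambda^+$ (when $\gamma>p$) or of $\mathcal{N}_\lambda^-$ (when $\gamma<p$) along which $\Phi_\lambda$ diverges to $-\infty$, respectively decreases to $0$. Since only (H1), (C1) and (C2) are assumed, the starting observation is that the \emph{first part} of the proof of Theorem~\ref{existencelambda^*}(1) uses nothing beyond these three hypotheses (and the weak semicontinuity of $P_1,P_2,F$ built into Lemma~\ref{basic}); it already yields a point $u\neq0$ attaining $\lambda^*$ with $F(u)=0$ and, by (C1), $F'(u)\neq0$. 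Because $u$ attains $\lambda^*$ we have $P_2(u)>0$ and $H_{\lambda^*}(u)=P_1(u)-\lambda^*P_2(u)=0$, hence
$$H_\lambda(u)=H_{\lambda^*}(u)-(\lambda-\lambda^*)P_2(u)=-(\lambda-\lambda^*)P_2(u)<0\qquad\text{for every }\lambda>\lambda^*.$$

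Next I would perturb $u$ off the zero set of $F$: pick $w\in X$ with $F'(u)w<0$ (possible since $F'(u)\neq0$) and set $u_\varepsilon:=u+\varepsilon w$. A first-order expansion of the $C^1$ functionals $F$ and $H_\lambda$ gives $F(u_\varepsilon)=\varepsilon F'(u)w+o(\varepsilon)$, so $F(u_\varepsilon)<0$ for all small $\varepsilon>0$ with $F(u_\varepsilon)\to0^-$, while $H_\lambda(u_\varepsilon)\to H_\lambda(u)<0$. Thus $u_\varepsilon\in D_\lambda^-$ for $\varepsilon$ small, and one can pass to the Nehari point on this ray: by Lemma~\ref{neharisign}, $v_\varepsilon:=t_\lambda(u_\varepsilon)u_\varepsilon\in\mathcal{N}_\lambda$ with $t_\lambda(u_\varepsilon)=\bigl(H_\lambda(u_\varepsilon)/F(u_\varepsilon)\bigr)^{1/(\gamma-p)}$, and by \eqref{neharenergy} together with the $p$-homogeneity of $H_\lambda$,
$$\Phi_\lambda(v_\varepsilon)=\frac{\gamma-p}{p\gamma}\,t_\lambda(u_\varepsilon)^p H_\lambda(u_\varepsilon)=\frac{\gamma-p}{p\gamma}\left(\frac{H_\lambda(u_\varepsilon)}{F(u_\varepsilon)}\right)^{\frac{p}{\gamma-p}}H_\lambda(u_\varepsilon).$$
The crucial asymptotic is $H_\lambda(u_\varepsilon)/F(u_\varepsilon)\to+\infty$ as $\varepsilon\to0^+$, since the numerator stays bounded away from $0$ and the denominator tends to $0^-$.

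To conclude I would split on the sign of $\gamma-p$. If $\gamma>p$, then $v_\varepsilon\in\mathcal{N}_\lambda^+$ by Proposition~\ref{fiberingmaps}(1)(b) and the exponent $p/(\gamma-p)$ is positive, so the displayed quantity tends to $-\infty$; taking $\varepsilon=1/n$ gives $c_\lambda^+=-\infty$, which is (1). If $\gamma<p$, then $v_\varepsilon\in\mathcal{N}_\lambda^-$ by Proposition~\ref{fiberingmaps}(2)(b), and since now $p/(\gamma-p)<0$ the displayed quantity tends to $0^+$, whence $c_\lambda^-\le0$. For the reverse inequality I would use that, for every $u\in D_\lambda^-$, $t_\lambda(u)$ is the global maximum of $\varphi_{\lambda,u}$ and $\varphi_{\lambda,u}(0)=0$, so $\Phi_\lambda(t_\lambda(u)u)=\varphi_{\lambda,u}(t_\lambda(u))\ge0$; since $\mathcal{N}_\lambda^-=\{t_\lambda(u)u:u\in D_\lambda^-\}$, this forces $c_\lambda^-\ge0$, and therefore $c_\lambda^-=0$, which is (2).

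I expect the only genuinely delicate point to be the very first step: securing a point on $\{F=0\}$ at which $F'$ does not vanish and $H_\lambda$ is strictly negative — this is exactly where (C1)--(C2) and the attainment of $\lambda^*$ enter, and it is worth stressing that (S) and $\mu_*<\lambda^*$ are \emph{not} needed, only the first half of the proof of Theorem~\ref{existencelambda^*}(1). Everything afterwards is elementary bookkeeping of signs and exponents in the scalar fibering map, the single extra subtlety being the trivial lower bound $\Phi_\lambda\ge0$ on $\mathcal{N}_\lambda^-$ required when $\gamma<p$.
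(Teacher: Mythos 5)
Your proposal is correct and follows essentially the same route as the paper: take the extremal point $u$ with $H_{\lambda^*}(u)=F(u)=0$ furnished by Theorem \ref{existencelambda^*}(1) (which, as you rightly note, only needs (H1), (C1), (C2)), perturb it into $D_\lambda^-$ so that $F\to 0^-$ while $H_\lambda$ stays negative, and read off the sign and order of the Nehari energy $\frac{\gamma-p}{p\gamma}\,t_\lambda^{p}H_\lambda$. The only (harmless) deviation is that you choose the perturbation direction using $F'(u)\neq 0$ alone, whereas the paper picks $v$ with both $H_{\lambda^*}'(u)v<0$ and $F'(u)v<0$ via the Lagrange relation $\frac{1}{p}H_{\lambda^*}'(u)=\frac{1}{\gamma}F'(u)$; since $H_\lambda(u)<0$ already holds for $\lambda>\lambda^*$, your simpler choice suffices.
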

\begin{proof}\strut 
	\begin{enumerate} 
	\item	By Theorem \ref{existencelambda^*}(1) there exists $u\in X$ such that $H_{\lambda^*}(u)=F(u)=0$ and
	\begin{equation}\label{eq7}
	\frac{1}{p}H_{\lambda^*}'(u)=\frac{1}{\gamma}F'(u)\neq 0.
	\end{equation}
	We choose  $v\in X$ such that $H_{\lambda^*}'(u)v<0$ and $F'(u)v<0$. As in the proof of Theorem \ref{existencelambda^*}(1), we have
	\begin{equation*}
	H_{\lambda^*}(u+sv)<0 \ \ \mbox{and}\ \ F(u+sv)<0,
	\end{equation*}
	for $s>0$ small enough.
	Now fix $\lambda>\lambda^*$ and note that $H_\lambda(u+sv)<0$, which implies, in particular, that $u+sv\in D_\lambda^-$. Moreover, by continuity  $H_\lambda(u+sv)\to H_\lambda(u)<H_{\lambda^*}(u)=0$ as $s\to 0^+$. Therefore
	  $t_{\lambda}(u+sv)(u+sv)\in \mathcal{N}_\lambda^+$  and
		\begin{equation*}
			\lim_{s\to 0^+}\Phi_\lambda(t_{\lambda}(u+sv)(u+sv))=\lim_{s\to 0^+}-\frac{\gamma-p}{\gamma p}\frac{|H_\lambda(u+sv)|^{\frac{\gamma}{\gamma-p}}}{|F(u+sv)|^{\frac{p}{\gamma-p}}}=-\infty.
		\end{equation*}
	\item The argument is similar to the previous one. Note that now if $u+sv\in D_\lambda^{\pm}$ then 
	 $t_{\lambda}(u+sv)(u+sv)\in \mathcal{N}_\lambda^{\pm}$ and $\gamma-p<0$.
	\end{enumerate}
\end{proof}

\begin{remark}\label{unbogamma<p} \strut \rm 
\begin{enumerate} 
\item Proving that $c_\lambda^+=-\infty$ for $\lambda > \lambda^*$ when $\gamma<p$ (as well as $c_\lambda^-=0$ for $\lambda> \lambda^*$ when $\gamma>p$) is more delicate and we are not able to do it in this general setting. However, this is indeed the case in our applications  (see Remark \ref{unboundedexamples}). Finally, let us observe that this result can be proved if we assume that the set $\{u \in X: F(u)>0\}$ is pathwise connected.   
\item An argument similar to the one in the previous proof shows that $c_{\lambda^*}^-=0$ for $\gamma<p$.
\end{enumerate}	
\end{remark}

To overcome the problem posed by Lemma \ref{unboundedenergy}, we show that $\Phi_\lambda$ has a local minimizer over $\mathcal{N}_\lambda^+$. Let us first extend Proposition \ref{coerciindefin}:
\begin{proposition}\label{coercivi>lambda^*} Suppose (H1), (C1), (C2), and $\mu_*< \lambda^*$.
	\begin{enumerate}
		\item Given $\mu>0$ there exist $C,\varepsilon>0$  such that
		$H_\lambda(u)\ge C\|u\|^p$ for any $u\in X$ satisfying $F(u)\ge \mu\|u\|^{\gamma}$, and any $\lambda\in[\lambda^*,\lambda^*+\varepsilon]$.
		\item Given $\mu\in(\mu_*,\lambda^*)$  there exists $D>0$ such that	$F(u)\le -D\|u\|^{\gamma}$ for any $u\in X$ such that $H_\mu(u)\le 0$.
	\end{enumerate}
\end{proposition}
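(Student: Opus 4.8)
The plan is to argue by contradiction in each part, following the scheme of the proof of Proposition~\ref{coerciindefin}: normalize on the unit sphere $S$ using homogeneity, extract a weak limit via Lemma~\ref{basic}, and then exploit the weak (semi)continuity of $P_1,P_2,F$ together with \eqref{eh1}--\eqref{eh11} and the standing hypotheses. For part (1), suppose the assertion fails; then for each $n$ there are $\lambda_n\in[\lambda^*,\lambda^*+\tfrac1n]$ and $u_n\neq 0$ with $F(u_n)\ge\mu$ and $H_{\lambda_n}(u_n)<\tfrac1n\|u_n\|^p$. Replacing $u_n$ by $u_n/\|u_n\|$ and using the $p$-homogeneity of $H_{\lambda_n}$, we may assume $u_n\in S$, so that $H_{\lambda_n}(u_n)<\tfrac1n$ and $F(u_n)>0$. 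Since $(H_{\lambda_n}(u_n))$ is bounded below (by $C_1-\lambda_nC_2$) and $\limsup H_{\lambda_n}(u_n)\le 0$, Lemma~\ref{basic} gives, along a subsequence, $u_n\rightharpoonup u$ with $P_2(u)>0$; in particular $u\neq 0$. The weak lower semicontinuity of $P_1$, the weak continuity of $P_2$ and $\lambda_n\to\lambda^*$ yield $H_{\lambda^*}(u)\le\liminf H_{\lambda_n}(u_n)\le 0$, while the weak upper semicontinuity of $F$ and $F(u_n)>0$ give $F(u)\ge 0$. Hence \eqref{eh11} forces $H_{\lambda^*}(u)=0$, so that $P_1(u)/P_2(u)=\lambda^*$ and $u$ realizes $\lambda^*$; moreover $H_{\lambda_n}(u_n)\to 0$, whence $P_1(u_n)=H_{\lambda_n}(u_n)+\lambda_nP_2(u_n)\to\lambda^*P_2(u)=P_1(u)$.

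Now I split according to the sign of $F(u)$. If $F(u)>0$, then $F>0$ on a neighbourhood of $u$, where $P_1/P_2\ge\lambda^*=(P_1/P_2)(u)$ by (H1); thus $u$ is a local minimizer of the $0$-homogeneous quotient $P_1/P_2$, so $H_{\lambda^*}'(u)=P_1'(u)-\lambda^*P_2'(u)=0$, contradicting (C2). If $F(u)=0$, then $u$ realizes $\lambda^*$ with $F(u)=0$, and one re-runs the Lagrange multiplier argument from the proof of Theorem~\ref{existencelambda^*}(1) — which, under (C1) and (C2), produces $H_{\lambda^*}'(u)=\alpha P_2(u)F'(u)$ with $\alpha>0$ — and combines it with (H2) and the relations $F(u_n)>0$, $P_1(u_n)\to P_1(u)$ to reach a contradiction. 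I expect this last case to be the main obstacle, and it is the precise point where (H2) must enter: a sequence in $S$ converging weakly to a null-level minimizer of $\lambda^*$ from within $\{F>0\}$ is a priori compatible with (H1), (C1), (C2) alone, which is exactly the difficulty absent from Proposition~\ref{coerciindefin} (valid only for $\lambda<\lambda^*$, where \eqref{eh1} is strict).

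For part (2), since $\mu<\lambda^*$ the strict inequality \eqref{eh1} is available, and the argument of Proposition~\ref{coerciindefin}(1)(b) applies essentially verbatim with $\lambda$ replaced by $\mu$. Indeed, if the claim failed there would be $u_n\neq 0$ with $H_\mu(u_n)\le 0$ and $F(u_n)$ exceeding $-\tfrac1n\|u_n\|^{\gamma}$; normalizing to $u_n\in S$ we would get $H_\mu(u_n)\le 0$ and $F(u_n)>-\tfrac1n$. Lemma~\ref{basic} (with the constant sequence $\lambda_n\equiv\mu\ge 0$) would give $u_n\rightharpoonup u\neq 0$ with $P_2(u)>0$, and the weak (semi)continuity of $P_1$, $P_2$, $F$ would yield $H_\mu(u)\le 0\le F(u)$, contradicting \eqref{eh1}. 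Hence there is $D>0$ such that the stated lower bound holds, and this part is in essence a restatement of Proposition~\ref{coerciindefin}(1)(b), since $\mu$ lies strictly below $\lambda^*$.
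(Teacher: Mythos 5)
Your part (2) is correct and is essentially the paper's own argument (it is the same contradiction/weak-limit scheme as Proposition \ref{coerciindefin}(1)(b), using Lemma \ref{basic} and \eqref{eh1} with $\mu<\lambda^*$); nothing to object to there.

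Part (1), however, has a genuine gap, and it is exactly the one you flag yourself. After normalizing to the unit sphere you correctly note that the constraint $F(u_n)\ge\mu$ degenerates to $F(u_n)>0$ (since $F$ is $\gamma$-homogeneous while $H_\lambda$ is $p$-homogeneous), and this forces you to confront the case $F(u)=0$ for the weak limit. You do not close that case: you appeal to ``(H2)'', but (H2) is never defined anywhere in the paper, so ``combine with (H2) \dots to reach a contradiction'' is not an argument, and no contradiction is actually derived. This is not a removable formality in your setup: under the standing hypotheses $\lambda^*$ \emph{is} achieved by some $u_0\in S$ with $F(u_0)=0$ and $F'(u_0)\neq 0$ (Theorem \ref{existencelambda^*}(1) together with (C1)), and then $v_s:=(u_0+sw)/\|u_0+sw\|$ with $F'(u_0)w>0$ satisfies $F(v_s)>0$ and $H_{\lambda^*}(v_s)\to 0$ as $s\to 0^+$; hence $\inf\{H_{\lambda^*}(v):v\in S,\ F(v)>0\}=0$, and a reduction that only retains the sign of $F$ can never produce a positive constant $C$. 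The paper avoids this by keeping the \emph{level} constraint: its contradiction sequence is taken on $S$ with $F(u_n)\ge\mu$ intact, so the weak limit satisfies $F(u)\ge\mu>0$ and $H_{\lambda^*}(u)\le 0$, which (via \eqref{eh11}) makes $u$ a minimizer of $\lambda^*$ with $F(u)>0$ — impossible by the local-minimizer/(C2) argument, i.e.\ by Theorem \ref{existencelambda^*}(1). In other words, the lower bound $H_\lambda(u)\ge C\|u\|^p$ must be proved on the set $\{u\in S: F(u)\ge\mu\}$ itself (the constant then depends on $\mu$, as the statement allows), not transported from the cone $\{F>0\}$; your case $F(u)>0$ is handled correctly, but the case $F(u)=0$ must be excluded at the outset by retaining $F\ge\mu$, not resolved afterwards. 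The same remark applies to your one-step treatment of $\lambda_n\searrow\lambda^*$, which the paper instead handles by a second, separate compactness argument once the case $\lambda=\lambda^*$ is settled.
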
 
\begin{proof} \strut
\begin{enumerate}
\item Suppose, on the contrary, that there exists a sequence $\lambda_n \to \lambda^*$ with $\lambda_n > \lambda^*$ and $(u_n) \subset X$ such that $F(v_n) \ge \mu$ and $H_{\lambda_n}(v_n)\to 0$, where  $v_n=u_n/\|u_n\|$, for every $n$. Thus $v_n \rightharpoonup v \neq 0$,   by Lemma \ref{basic}. Therefore $H_{\lambda^*}(v)\le 0 <\mu \le F(v)$ and by definition of $\lambda^*$ we have $H_{\lambda^*}(v)= 0$, i.e. $\lambda^*$ is achieved by $v$. Theorem \ref{existencelambda^*} (1) implies that $F(v)= 0$, which yields a contradiction.\\

\item  Arguing by contradiction we find a sequence $(u_n)\subset S$ such that $H_\mu(u_n)\le 0$ and $F(u_n)\ge -1/n$ for all $n$. By Lemma \ref{basic} we can assume that $u_n\rightharpoonup u \neq 0$ in $X$. Therefore $H_\mu(u)\le 0 \le F(u)$, which contradicts \eqref{eh1}.
\end{enumerate}
\end{proof}
Next we fix $(\lambda,\mu)\in(\lambda^*,\infty)\times (\mu_*,\lambda^*)$ and set
\begin{equation*}
\mathcal{N}_{\lambda,\mu}^+:=
\begin{cases}
\{u\in \mathcal{N}_\lambda^+: H_\mu(u)<0\} & \mbox{ if } \gamma>p,\\
\{u\in \mathcal{N}_\lambda^+: F(u/\|u\|)>\mu\} & \mbox{ if }\gamma<p.
\end{cases}
\end{equation*}

In the next result $\varepsilon$ is given by Proposition \ref{coercivi>lambda^*}.

\begin{proposition}\label{boundecoerci>lambda^*} Suppose (H1), (C1), (C2), and $\mu_*< \lambda^*$. Then $ \mathcal{N}_{\lambda,\mu}^+$ is bounded for $\lambda\ge \lambda^*$ if $\gamma>p$ (respect. for $\lambda\in[\lambda^*,\lambda^*+\varepsilon]$ if $\gamma<p$).
\end{proposition}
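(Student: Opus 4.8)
The plan is to dispose of the two cases $\gamma>p$ and $\gamma<p$ separately; both reduce to the sharpened coercivity estimates of Proposition~\ref{coercivi>lambda^*}, combined with the identity $H_\lambda(u)=F(u)$ valid on $\mathcal{N}_\lambda$ (Lemma~\ref{neharisign}) and the homogeneity of the three terms.

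When $\gamma<p$ the argument is immediate. I would fix $\lambda\in[\lambda^*,\lambda^*+\varepsilon]$ and $u\in\mathcal{N}_{\lambda,\mu}^+$; then $F(u)>\mu$, so Proposition~\ref{coercivi>lambda^*}(1) (applied with the positive threshold $\mu$, which is exactly what produces the $\varepsilon$ appearing in the statement) gives $H_\lambda(u)\ge C\|u\|^p$, while $H_\lambda(u)=F(u)\le C_3\|u\|^\gamma$ by the basic assumptions. Since $\|u\|>0$ and $p-\gamma>0$, this forces $\|u\|\le (C_3/C)^{1/(p-\gamma)}$, a bound that is in fact uniform in $\lambda$ over the whole interval.

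When $\gamma>p$ one cannot combine the available estimates directly, because on $\mathcal{N}_\lambda^+$ the quantity $F(u)=H_\lambda(u)$ is negative and every natural bound for it is an upper bound; so I would instead normalize. Write $u=tw$ with $t=\|u\|$ and $\|w\|=1$. From $H_\mu(u)<0$ and $p$-homogeneity, $H_\mu(w)<0$, hence Proposition~\ref{coercivi>lambda^*}(2) yields $F(w)\le -D<0$. Since $\lambda\ge\lambda^*>\mu$ and $P_2\ge 0$ we get $H_\lambda(w)\le H_\mu(w)<0$, and from $P_1\ge 0$, $P_2(w)\le C_2$ also $|H_\lambda(w)|\le \lambda C_2$. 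By Proposition~\ref{fiberingmaps} we have $\mathcal{N}_\lambda^+\subset D_\lambda^-$ and $t=t_\lambda(w)$, so Lemma~\ref{neharisign} gives $t^{\gamma-p}=H_\lambda(w)/F(w)=|H_\lambda(w)|/|F(w)|\le \lambda C_2/D$; as $\gamma-p>0$ this bounds $\|u\|$ in terms of the fixed data $\lambda$ and $\mu$ (recall $D=D(\mu)$), so $\mathcal{N}_{\lambda,\mu}^+$ is bounded.

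The only point that needs a little care is the $\gamma>p$ case just described: the extra constraint $H_\mu(u)<0$ defining $\mathcal{N}_{\lambda,\mu}^+$ is used precisely to keep $|F(w)|$ bounded below via Proposition~\ref{coercivi>lambda^*}(2), which is what makes the ratio $H_\lambda(w)/F(w)$ defining $t_\lambda(w)$ bounded above; without it $\mathcal{N}_\lambda^+$ itself is unbounded for $\lambda>\lambda^*$ (Lemma~\ref{unboundedenergy}). Everything else is routine homogeneity bookkeeping.
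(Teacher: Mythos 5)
Your proof is correct and is essentially the argument the paper intends: the paper's proof is the one-line instruction to combine Proposition \ref{coercivi>lambda^*} with the scheme of Proposition \ref{coerciindefin}(3), which is exactly what you carry out (your normalization to the unit sphere in the case $\gamma>p$ is only a cosmetic variant of comparing $-\lambda C_2\|u\|^p\le H_\lambda(u)=F(u)\le -D\|u\|^{\gamma}$ directly).
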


\begin{proof} Let $\gamma>p$. By using Proposition \ref{coercivi>lambda^*} (2), we can argue as in the proof of Proposition \ref{coerciindefin}(3). Now, if $\gamma<p$ then we argue by contradiction. Assume that there exist $\lambda_n \to \lambda^*$ with $\lambda_n > \lambda^*$ and $\mathcal{N}_{\lambda_n}^+$ unbounded. So we can find a sequence $(u_n)$ such that $u_n \in \mathcal{N}_{\lambda_n,\mu}^+$ for every $n$, and $\|u_n\| \to \infty$. Setting $v_n=\frac{u_n}{\|u_n\|}$,  we may assume that $v_n \rightharpoonup v$ in $X$. Since $t_{\lambda_n}(v_n)=\|u_n\|$ we have that
	$$t_{\lambda_n}(v_n)=\left(\frac{F(v_n)}{H_{\lambda_n}(v_n)}\right)^{\frac{1}{p-\gamma}} \to \infty.$$
	It follows that  $H_{\lambda_n}(v_n) \to 0$. On the other hand, since $F(v_n)>\mu$ we reach a contradiction as in the proof of Proposition \ref{coercivi>lambda^*} (1).

\end{proof}
Let us now introduce
\begin{equation*}
c_{\lambda,\mu}^+:=\inf_{\mathcal{N}_{\lambda,\mu}^+}\Phi_\lambda .
\end{equation*}
\begin{theorem}\label{existenceouyanggeneral} Suppose (H1), (C1), (C2), and $\mu_*< \lambda^*$.
	\begin{enumerate}
		\item If $\gamma>p$ and $\lambda\ge \lambda^*>\mu>\mu_*$, then there exists $u_\lambda\in \overline{\mathcal{N}_{\lambda,\mu}^+}$ such that $\Phi_\lambda(u_\lambda)=c_{\lambda,\mu}^+<0$.
		\item If $\gamma<p$ and  $\lambda\in[\lambda^*,\lambda^*+\varepsilon]$, then there exists $u_\lambda\in \overline{\mathcal{N}_{\lambda,\mu}^+}$ such that $\Phi_\lambda(u_\lambda)=c_{\lambda,\mu}^+<0$.
	\end{enumerate}
\end{theorem}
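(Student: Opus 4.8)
The plan is to minimize $\Phi_\lambda$ directly over $\mathcal{N}_{\lambda,\mu}^+$, in the spirit of the proof of Theorem \ref{existenceouyang}, the new feature being that a minimizing sequence may converge to a point of $\partial\mathcal{N}_{\lambda,\mu}^+$ --- which is why the statement only places $u_\lambda$ in the closure $\overline{\mathcal{N}_{\lambda,\mu}^+}$ and makes no criticality claim. Throughout I would use that $\Phi_\lambda$ is sequentially weakly lower semicontinuous (since $P_1$ is w.l.s.c., $P_2$ weakly continuous and $F$ w.u.s.c.) and that, by \eqref{nehariin}--\eqref{neharenergy}, $H_\lambda(v)=F(v)$ and $\Phi_\lambda(v)=\frac{\gamma-p}{p\gamma}H_\lambda(v)=\frac{\gamma-p}{p\gamma}F(v)$ on $\mathcal{N}_\lambda$. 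A preliminary step is to check $\mathcal{N}_{\lambda,\mu}^+\neq\emptyset$ and $c_{\lambda,\mu}^+<0$: for $\gamma>p$ one produces an element directly from $\mu>\mu_*$ (take $v\neq0$ with $F(v)<0$, $H_\mu(v)<0$; then $P_2(v)>0$, $H_\lambda(v)=H_\mu(v)+(\mu-\lambda)P_2(v)<0$, so $v\in D_\lambda^-$ and $t_\lambda(v)v\in\mathcal{N}_{\lambda,\mu}^+$ with negative energy by Proposition \ref{fiberingmaps}), while for $\gamma<p$ every element of $\mathcal{N}_{\lambda,\mu}^+$ automatically satisfies $\Phi_\lambda(u)=\frac{\gamma-p}{p\gamma}F(u)<\frac{\gamma-p}{p\gamma}\mu<0$, so only non-emptiness has to be verified (this is where the smallness of $\varepsilon$ is used).

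Next I would take a minimizing sequence $(u_n)\subset\mathcal{N}_{\lambda,\mu}^+$; it is bounded by Proposition \ref{boundecoerci>lambda^*}, so $u_n\rightharpoonup u$ along a subsequence, and $H_\lambda(u_n)=F(u_n)\to L:=\frac{p\gamma}{\gamma-p}c_{\lambda,\mu}^+$, with $L<0$ if $\gamma>p$ and $L>0$ if $\gamma<p$. Weak (semi)continuity then forces $u\neq0$ and pins $u$ in the appropriate cone: if $\gamma>p$, $H_\lambda(u)\le L<0$ and $H_\mu(u)\le0$, hence $F(u)<0$ by \eqref{eh1} (recall $\mu<\lambda^*$), so $u\in D_\lambda^-$; if $\gamma<p$, $F(u)\ge L\ge\mu>0$ and Proposition \ref{coercivi>lambda^*}(1) gives $H_\lambda(u)\ge C\|u\|^p>0$, so $u\in D_\lambda^+$. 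By Proposition \ref{fiberingmaps}, $w:=t_\lambda(u)u\in\mathcal{N}_\lambda^+$ and $t_\lambda(u)$ is the global minimum point of $\varphi_{\lambda,u}$, whence $\Phi_\lambda(w)\le\Phi_\lambda(u)\le\liminf\Phi_\lambda(u_n)=c_{\lambda,\mu}^+$. Since $\Phi_\lambda$ is (strongly) continuous, $\inf_{\overline{\mathcal{N}_{\lambda,\mu}^+}}\Phi_\lambda=c_{\lambda,\mu}^+$, so it remains only to show $w\in\overline{\mathcal{N}_{\lambda,\mu}^+}$, which then yields $\Phi_\lambda(w)=c_{\lambda,\mu}^+$ and finishes the proof with $u_\lambda:=w$.

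The hard part is this last step. For $\gamma>p$, $H_\mu(w)=t_\lambda(u)^pH_\mu(u)\le0$; if it is $<0$ then $w\in\mathcal{N}_{\lambda,\mu}^+$ and we are done, while if $H_\mu(u)=0$, then (as $H_\mu(u_n)<0$) a squeeze gives $H_\mu(u_n)\to0$, hence $P_1(u_n)=H_\mu(u_n)+\mu P_2(u_n)\to\mu P_2(u)=P_1(u)$, and (H2) forces $u_n\to u$ strongly; then $u\in\overline{\mathcal{N}_{\lambda,\mu}^+}$, and since $u\in D_\lambda^-\cap\mathcal{N}_\lambda$ gives $u\in\mathcal{N}_\lambda^+$ and $w=u$, we are done again. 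For $\gamma<p$, the inequality above gives $\frac{\gamma-p}{p\gamma}F(w)=\Phi_\lambda(w)\le\frac{\gamma-p}{p\gamma}L$, hence $F(w)\ge L\ge\mu$; if $F(w)>\mu$ then $w\in\mathcal{N}_{\lambda,\mu}^+$, and if $F(w)=\mu$ one reduces to the same strong-convergence argument (non-degeneracy of the fibering minimum forces $t_\lambda(u)=1$, so $w=u\in\mathcal{N}_\lambda$, $F(u_n)\to F(u)$, hence again $P_1(u_n)\to P_1(u)$ and $u_n\to u$ by (H2)). So the obstacle is twofold: since $\mathcal{N}_{\lambda,\mu}^+$ is an open truncation, the only mechanism producing a minimizer on its boundary is the compactness hypothesis (H2), which dictates the case analysis above; and in the subhomogeneous regime $\gamma<p$ the constraint $F(u)>\mu$ is not $0$-homogeneous, so one cannot rescale a weak limit back onto $\mathcal{N}_\lambda^+$ while keeping the constraint and must instead read $F(w)\ge\mu$ off the energy inequality --- which is also why the non-emptiness of $\mathcal{N}_{\lambda,\mu}^+$ must be argued separately.
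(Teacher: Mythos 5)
Your proposal is correct and follows essentially the same route as the paper's proof: boundedness of a minimizing sequence via Proposition \ref{boundecoerci>lambda^*}, identification of the weak limit $u$ in $D_\lambda^-$ (resp. $D_\lambda^+$) via \eqref{eh1} (resp. Proposition \ref{coercivi>lambda^*}), and projection $t_\lambda(u)u$ back onto $\mathcal{N}_\lambda^+$. Your extra care at the final step is in fact warranted: for $\gamma<p$ the paper simply asserts $t_\lambda(u)u\in\overline{\mathcal{N}_{\lambda,\mu}^+}$ even though the constraint $F>\mu$ is not scale-invariant, and your derivation of $F(t_\lambda(u)u)\ge\mu$ from the energy identity (together with the strong-convergence fallback via (S) in the borderline cases) makes explicit a point the paper leaves implicit.
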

\begin{proof}  Propositions \ref{fiberingmaps} and \ref{boundecoerci>lambda^*} imply that $-\infty<c_{\lambda,\mu}^+< 0$. Let $(u_n)\subset \mathcal{N}_{\lambda,\mu}^+$ be a minimizing sequence for $c_{\lambda,\mu}^+$. By Proposition \ref{boundecoerci>lambda^*} we can assume that $u_n\rightharpoonup u$ in $X$ and it is clear that $u\neq 0$. 
	
	\begin{enumerate}
\item 	 Note that $H_\mu(u)\le \liminf H_\mu(u_n)\le 0$, which implies that $F(u)<0$ by \eqref{eh1}. Since $H_\lambda(u)<H_{\mu}(u)\le0$, by Proposition \ref{fiberingmaps} there exists $t_\lambda(u)>0$ such that $t_\lambda(u)u\in \mathcal{N}_{\lambda}^+$, and then $t_\lambda(u)u\in \overline{\mathcal{N}_{\lambda,\mu}^+}$ since $H_\mu(t_\lambda(u)u)=t_\lambda(u)^pH_\mu(u)\le 0$.  Therefore
	\begin{equation*}
	\Phi_\lambda(t_\lambda(u)u)\le\Phi_\lambda(u) \le\liminf \Phi_\lambda(u_n)=c_{\lambda,\mu}^+,
	\end{equation*}
	i.e. $	\Phi_\lambda(t_\lambda(u)u)=c_{\lambda,\mu}^+$.\\

\item Note that $F(u/\|u\|)\ge \limsup F(u_n/\|u_n\|)\ge \mu$, which implies by Proposition \ref{coercivi>lambda^*} that $H_\lambda(u/\|u\|)>0$. Thus by Proposition \ref{fiberingmaps}, there exists $t_\lambda(u)>0$ such that $t_\lambda(u)u\in  \overline{\mathcal{N}_{\lambda,\mu}^+}$. Therefore
\begin{equation*}
\Phi_\lambda(t_\lambda(u)u)\le\Phi_\lambda(u) \le\liminf \Phi_\lambda(u_n)=c_{\lambda,\mu}^+,
\end{equation*}
so that $	\Phi_\lambda(t_\lambda(u)u)=c_{\lambda,\mu}^+$.
\end{enumerate}
\end{proof}

 We introduce now the set of minimizers associated to $c_\lambda^+$, i.e.
\begin{equation*}
\mathcal{S}_\lambda=\{u\in \mathcal{N}_\lambda^+:\Phi_\lambda(u)=c_\lambda^+\}.
\end{equation*}
\begin{lemma} \label{compact} Suppose (H1), $\mu_*< \lambda^*$, and (C1) if $\gamma>p$ (respect. (C2) if $\gamma<p$). Then $\mathcal{S}_{\lambda^*}$ is compact.
\end{lemma}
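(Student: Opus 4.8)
The plan is to establish that $\mathcal{S}_{\lambda^*}$ is sequentially compact (the case $\mathcal{S}_{\lambda^*}=\emptyset$ being trivial): given $(u_n)\subset\mathcal{S}_{\lambda^*}$, I will produce a subsequence converging strongly in $X$ to a point of $\mathcal{S}_{\lambda^*}$. Two observations will be used throughout. First, since $\mathcal{N}_{\lambda^*}^+$ is a $C^1$ manifold, each $u_n$, being a minimizer of $\Phi_{\lambda^*}$ over $\mathcal{N}_{\lambda^*}^+$, is a critical point of $\Phi_{\lambda^*}$, so that $\gamma H_{\lambda^*}'(u_n)=pF'(u_n)$ for all $n$. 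Second, by \eqref{neharenergy} and Proposition \ref{fiberingmaps}, $H_{\lambda^*}(u_n)=F(u_n)=\frac{p\gamma}{\gamma-p}\,c_{\lambda^*}^+=:k$ for all $n$, with $c_{\lambda^*}^+<0$, so $k<0$ if $\gamma>p$ and $k>0$ if $\gamma<p$.

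The main step is to prove that $(u_n)$ is bounded. Assume, for contradiction, that $\|u_n\|\to\infty$ and set $v_n:=u_n/\|u_n\|\in S$; then $H_{\lambda^*}(v_n)=k\|u_n\|^{-p}\to 0$ and $F(v_n)=k\|u_n\|^{-\gamma}\to 0$, so Lemma \ref{basic} gives $v_n\rightharpoonup v\neq 0$ along a subsequence. By the weak (semi)continuity of $H_{\lambda^*}$ and $F$ together with \eqref{eh11}, $H_{\lambda^*}(v)=0$ and $F(v)\ge 0$, and thus $v$ achieves $\lambda^*$; moreover $P_2(v_n)\to P_2(v)$ and $H_{\lambda^*}(v_n)\to 0$ force $P_1(v_n)\to P_1(v)$, so by (H2) we get $v_n\to v$ in $X$, whence $H_{\lambda^*}'(v_n)\to H_{\lambda^*}'(v)$ and $F'(v_n)\to F'(v)$. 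By homogeneity the identity $\gamma H_{\lambda^*}'(u_n)=pF'(u_n)$ becomes $\gamma H_{\lambda^*}'(v_n)=p\|u_n\|^{\gamma-p}F'(v_n)$. If $\gamma>p$, dividing by $\|u_n\|^{\gamma-p}\to\infty$ yields $F'(v)=0$, contradicting (C1); if $\gamma<p$, then $\|u_n\|^{\gamma-p}\to 0$ yields $H_{\lambda^*}'(v)=0$, contradicting (C2). Hence $(u_n)$ is bounded, and we may assume $u_n\rightharpoonup u$ in $X$.

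Next I would identify $u$ as an element of $\mathcal{S}_{\lambda^*}$, following the scheme used for $c_\lambda^+$ in the proof of Theorem \ref{existenceouyang}. From weak lower semicontinuity, $H_{\lambda^*}(u)\le k$. If $\gamma>p$ then $H_{\lambda^*}(u)<0$, so $u\neq 0$, and since $F(u)\ge 0$ would contradict \eqref{eh11} we get $F(u)<0$, i.e. $u\in D_{\lambda^*}^-$. If $\gamma<p$ then $F(u)\ge k>0$ by weak upper semicontinuity, hence $H_{\lambda^*}(u)\ge 0$ by \eqref{eh11}; and $H_{\lambda^*}(u)=0$ is impossible, for otherwise $u$ would be a local minimizer of $P_1/P_2$ (the constraint $F\ge 0$ being inactive at $u$), so that $H_{\lambda^*}'(u)=0$, contradicting (C2), exactly as in the proof of Theorem \ref{existencelambda^*}(1); thus $u\in D_{\lambda^*}^+$. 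In either case Proposition \ref{fiberingmaps} provides $t:=t_{\lambda^*}(u)>0$ with $tu\in\mathcal{N}_{\lambda^*}^+$, $t$ being the unique critical point, and the (non-degenerate) global minimum point, of $\varphi_{\lambda^*,u}$. Then
$$c_{\lambda^*}^+\le\Phi_{\lambda^*}(tu)\le\Phi_{\lambda^*}(u)\le\liminf\Phi_{\lambda^*}(u_n)=c_{\lambda^*}^+,$$
so $\varphi_{\lambda^*,u}(1)=\Phi_{\lambda^*}(u)=\min\varphi_{\lambda^*,u}$, which forces $t=1$; hence $u\in\mathcal{N}_{\lambda^*}^+$ and $\Phi_{\lambda^*}(u)=c_{\lambda^*}^+$, i.e. $u\in\mathcal{S}_{\lambda^*}$.

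It remains to upgrade the weak convergence to strong convergence. Since $u\in\mathcal{S}_{\lambda^*}$ we have $H_{\lambda^*}(u)=k=H_{\lambda^*}(u_n)$ for all $n$; combined with $P_2(u_n)\to P_2(u)$ this gives $P_1(u_n)=H_{\lambda^*}(u_n)+\lambda^*P_2(u_n)\to H_{\lambda^*}(u)+\lambda^*P_2(u)=P_1(u)$, so (H2) yields $u_n\to u$ in $X$. This shows that every sequence in $\mathcal{S}_{\lambda^*}$ admits a subsequence converging in $X$ to a point of $\mathcal{S}_{\lambda^*}$, i.e. $\mathcal{S}_{\lambda^*}$ is compact. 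The delicate point is the boundedness step, where one crucially exploits the structural condition (H2) (to pass from weak to strong convergence of the rescaled sequence) together with the non-degeneracy conditions (C1) or (C2); the remaining steps are routine, relying on Lemma \ref{basic}, Proposition \ref{fiberingmaps}, \eqref{eh11}, and the weak semicontinuity hypotheses.
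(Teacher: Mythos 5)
Your proof is correct and follows essentially the same route as the paper's, which writes $u_n=t_nw_n$ with $w_n\in\Theta_{\lambda^*}$ and refers back to the argument of Theorem \ref{existencelambda^*}(2) — ruling out $t_n\to\infty$ via the rescaled critical-point identity, Lemma \ref{basic}, the strong-convergence condition and (C1)/(C2) — exactly as you do with $v_n=u_n/\|u_n\|$. The only cosmetic point is that you declare ``$v$ achieves $\lambda^*$'' before knowing $F(v)=0$, but this is harmless: for $\gamma>p$ the conclusion $F'(v)=0$ forces $F(v)=0$ by Euler's identity, and for $\gamma<p$ the paper itself applies (C2) to minimizers of the infimum in (H1) with $F>0$ in the same way.
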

\begin{proof} Indeed, take $(u_n)\subset \mathcal{S}_{\lambda^*}$ and note that $\Phi_{\lambda^*}'(u_n)=0$. Writing $u_n=t_nw_n$ with $w_n\in \Theta_{\lambda^*}$, and arguing as in the proof of Theorem \ref{existencelambda^*}, we conclude that up to a subsequence $u_n \to u\in \mathcal{N}_{\lambda^*}^+$ and the proof is complete.  
\end{proof}
\begin{corollary}\label{submanifold} Suppose (H1), $\mu_*< \lambda^*$, and $\mathcal{S}_{\lambda^*}\neq\emptyset$.
	\begin{enumerate}
		\item Assume $\gamma>p$ and (C1). Then there exists $\mu\in(\mu_*,\lambda^*)$ such that $H_\mu(u)<0$ for all $u\in \mathcal{S}_{\lambda^*}$.
		\item Assume $\gamma<p$ and (C2). Then there exists $\mu>0$ such that $F(u)>\mu$ for all $u\in \mathcal{S}_{\lambda^*}$.
	\end{enumerate} 
\end{corollary}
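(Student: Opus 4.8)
The plan is to combine the compactness of $\mathcal{S}_{\lambda^*}$ furnished by Lemma~\ref{compact} with the inclusions of $\mathcal{N}_{\lambda^*}^+$ into the appropriate cone $D_{\lambda^*}^{\mp}$ supplied by Proposition~\ref{fiberingmaps}, and then to upgrade the resulting \emph{pointwise} strict sign inequalities to \emph{uniform} ones by exploiting continuity of $H_{\lambda^*}$, $P_2$ and $F$ on the compact set $\mathcal{S}_{\lambda^*}$. First I would invoke Lemma~\ref{compact}, which under (H1), (H2) and (C1) (resp. (C2)) guarantees that $\mathcal{S}_{\lambda^*}$ is compact.

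For part (1), with $\gamma>p$, Proposition~\ref{fiberingmaps}(1) gives $\mathcal{S}_{\lambda^*}\subset\mathcal{N}_{\lambda^*}^+\subset D_{\lambda^*}^-$, so $H_{\lambda^*}(u)<0$ for every $u\in\mathcal{S}_{\lambda^*}$. Since $H_{\lambda^*}$ is continuous and $\mathcal{S}_{\lambda^*}$ is compact, the number $a:=-\sup_{u\in\mathcal{S}_{\lambda^*}}H_{\lambda^*}(u)$ is strictly positive, while $M:=\sup_{u\in\mathcal{S}_{\lambda^*}}P_2(u)<\infty$ by continuity of $P_2$. If $M=0$ then $H_\mu\equiv H_{\lambda^*}\le-a<0$ on $\mathcal{S}_{\lambda^*}$ for every $\mu$, and I would just take any $\mu\in(\mu_*,\lambda^*)$; otherwise I would pick $\mu\in(\mu_*,\lambda^*)$ close enough to $\lambda^*$ that $(\lambda^*-\mu)M<a$, and then, using $H_\mu=H_{\lambda^*}+(\lambda^*-\mu)P_2$ and $P_2\ge0$,
\[
H_\mu(u)\le H_{\lambda^*}(u)+(\lambda^*-\mu)M\le -a+(\lambda^*-\mu)M<0\qquad\text{for all }u\in\mathcal{S}_{\lambda^*}.
\]
(The interval $(\mu_*,\lambda^*)$ is nonempty since $\mu_*<\lambda^*$ is in force here.)

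For part (2), with $\gamma<p$, Proposition~\ref{fiberingmaps}(2) gives $\mathcal{S}_{\lambda^*}\subset\mathcal{N}_{\lambda^*}^+\subset D_{\lambda^*}^+$, so $F(u)>0$ for every $u\in\mathcal{S}_{\lambda^*}$; continuity of $F$ and compactness of $\mathcal{S}_{\lambda^*}$ then yield $2\mu:=\inf_{u\in\mathcal{S}_{\lambda^*}}F(u)>0$, and consequently $F(u)\ge 2\mu>\mu$ for all $u\in\mathcal{S}_{\lambda^*}$, which is exactly what is claimed. I do not expect a genuine obstacle in this argument: the whole content is the passage from "for each $u$" to "uniformly in $u$", which is where compactness of $\mathcal{S}_{\lambda^*}$ does all the work; the only points that will deserve a line of care are the degenerate case $P_2\equiv0$ on $\mathcal{S}_{\lambda^*}$ in part (1) and the non-emptiness of the interval $(\mu_*,\lambda^*)$, both addressed above.
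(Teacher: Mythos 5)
Your argument is correct and follows essentially the same route as the paper: invoke Lemma~\ref{compact} for compactness of $\mathcal{S}_{\lambda^*}$, get the strict sign of $H_{\lambda^*}$ (resp.\ $F$) on $\mathcal{N}_{\lambda^*}^+$ from Proposition~\ref{fiberingmaps}, and then perturb $\lambda^*$ down to $\mu$ using the boundedness of $P_2$ on $\mathcal{S}_{\lambda^*}$. The only (harmless) difference is that the paper obtains the uniform bounds for free from the Nehari identity $\Phi_{\lambda^*}(u)=\frac{\gamma-p}{p\gamma}H_{\lambda^*}(u)=\frac{\gamma-p}{p\gamma}F(u)$, which shows $H_{\lambda^*}$ and $F$ are \emph{constant} equal to $\frac{p\gamma}{\gamma-p}c_{\lambda^*}^+$ on $\mathcal{S}_{\lambda^*}$, whereas you derive uniformity from continuity plus compactness.
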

\begin{proof} \strut
	\begin{enumerate}
		\item By Lemma \ref{compact} the set $\mathcal{S}_{\lambda^*}$ is compact and $H_{\lambda^*}(u)=\frac{p\gamma}{\gamma-p}c_{\lambda^*}<0$ for all $u\in \mathcal{S}_{\lambda^*}$,. Now choose $\mu\in(\mu_*,\lambda^*)$ such that $H_\mu(u)<0$ for all $u\in \mathcal{S}_{\lambda^*}$ and the proof is complete.\\
		
		\item Indeed, we have $F(u)=\frac{p\gamma}{\gamma-p}c_{\lambda^*}>0$ for all $u\in \mathcal{S}_{\lambda^*}$.
	\end{enumerate}		
\end{proof}

Next we deal with $\mu$  given by Corollary \ref{submanifold} and $u_\lambda$  given by Theorem \ref{existenceouyanggeneral}:
\begin{lemma}\label{limit1} Let $\lambda_n\searrow \lambda^*$  and $u_n\in \overline{\mathcal{N}_{\lambda_n,\mu}^+}$ be such that $\Phi_{\lambda_n}(u_n)=c_{\lambda_n,\mu}^+$.  If $\gamma>p$ and $u_n \rightharpoonup u\in\mathcal{D}_{\lambda^*}^-$ (respect. $\gamma<p$ and $u_n \rightharpoonup u\in\mathcal{D}_{\lambda^*}^+$) then $u_n \to u \in \mathcal{S}_{\lambda^*}$.
\end{lemma}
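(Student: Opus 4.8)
The plan is to combine the convergence machinery already developed in the proof of Theorem \ref{existencelambda^*}(2) with the energy comparison from Lemma \ref{limit}, transferred to the truncated Nehari sets $\mathcal{N}_{\lambda,\mu}^+$. Write $u_n = t_n w_n$ with $w_n \in \Theta_{\lambda_n}$ and $t_n = t_{\lambda_n}(w_n)$ as in \eqref{eq3}; note that $\Phi_{\lambda_n}(u_n) = c_{\lambda_n,\mu}^+ < 0$ gives, exactly as in \eqref{functional}, an expression $c_{\lambda_n,\mu}^+ = \pm \frac{\gamma-p}{p\gamma} t_n^p |H_{\lambda_n}(w_n)|$, so $t_n$ is bounded away from zero as soon as $c_{\lambda_n,\mu}^+$ is bounded away from $0$ — which holds since $c_{\lambda_n,\mu}^+ \le c_{\lambda^*,\mu}^+ < 0$ for $\lambda_n$ close to $\lambda^*$ (monotonicity of $\Theta_\lambda$ from Lemma \ref{projectionnehari}, hence of $\lambda \mapsto c_{\lambda,\mu}^+$, together with Theorem \ref{existenceouyanggeneral}). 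First I would establish that $(t_n)$ is also bounded: if $t_n \to \infty$ along a subsequence, then \eqref{eq3} forces (in Case $\gamma>p$) $F(w_n) \to 0$, so $F(w) \ge \limsup F(w_n) = 0$, while $H_{\lambda^*}(w) \le \liminf H_{\lambda_n}(w_n) \le 0$; by \eqref{eh11} we get $H_{\lambda^*}(w) = 0$, hence by Theorem \ref{existencelambda^*}(1) $F(w) = 0$ and $w$ achieves $\lambda^*$; then $H_{\lambda_n}(w_n) \to 0$ gives $P_1(w_n) \to P_1(w)$, so (S) yields $w_n \to w$ strongly, and passing to the limit in $t_n^{p-1} H_{\lambda_n}'(w_n) = t_n^{\gamma-1} F'(w_n)$ gives $F'(w) = 0$, contradicting (C1). (In Case $\gamma<p$ the same argument runs with the roles adjusted, contradicting (C2), exactly as in the proof of Theorem \ref{existencelambda^*}(2).) Thus $t_n \to t \in (0,\infty)$ along a subsequence and $u_n = t_n w_n \rightharpoonup u = tw$.

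Next I would upgrade the weak convergence $u_n \rightharpoonup u$ to strong convergence $u_n \to u$. Since $u \in \mathcal{D}_{\lambda^*}^-$ (in Case $\gamma>p$; resp. $\mathcal{D}_{\lambda^*}^+$ in Case $\gamma<p$) by hypothesis, Lemma \ref{limit} would apply \emph{if} we knew $u_n \in \mathcal{N}_{\lambda_n}^+$ with $\Phi_{\lambda_n}(u_n) = c_{\lambda_n}^+$, but here $u_n$ only minimizes over the truncated set. The point is that the argument of Lemma \ref{limit} adapts verbatim with $c_{\lambda^*}^+$ replaced by $c_{\lambda^*,\mu}^+$ and $\Theta_{\lambda^*}$ replaced by $\{v/\|v\| : v \in \mathcal{N}_{\lambda^*,\mu}^+\}$: the comparison point $v$ chosen near the infimum $c_{\lambda^*,\mu}^+$ lies in this truncated set, and for $\lambda$ close to $\lambda^*$ one still has $\Phi_\lambda(t_\lambda(v)v) < \alpha$ by continuity, so $c_{\lambda_n,\mu}^+ \le \Phi_{\lambda_n}(t_{\lambda_n}(v)v) < \alpha < \Phi_{\lambda^*}(u) \le \liminf \Phi_{\lambda_n}(u_n)$, a contradiction. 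This shows $\Phi_{\lambda^*}(u) = c_{\lambda^*,\mu}^+$ and, combined with the fibering characterization, that $u \in \mathcal{N}_{\lambda^*}^+$. Then $\Phi_{\lambda_n}(u_n) \to \Phi_{\lambda^*}(u)$ together with $P_2(u_n) \to P_2(u)$, $F(u_n) \to F(u)$ forces $H_{\lambda_n}(u_n) \to H_{\lambda^*}(u)$, hence $P_1(u_n) \to P_1(u)$, and (S) gives $u_n \to u$.

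Finally I would check $u \in \mathcal{S}_{\lambda^*}$, i.e. $u \in \mathcal{N}_{\lambda^*}^+$ and $\Phi_{\lambda^*}(u) = c_{\lambda^*}^+$. The first membership is already in hand. For the energy, on one hand $u \in \mathcal{N}_{\lambda^*}^+$ gives $\Phi_{\lambda^*}(u) \ge c_{\lambda^*}^+$; on the other hand $\Phi_{\lambda^*}(u) = c_{\lambda^*,\mu}^+ \le c_{\lambda^*}^+$ since the infimum over the larger set $\mathcal{N}_{\lambda^*}^+$ is $\le$ the infimum over... wait — actually $\mathcal{N}_{\lambda^*,\mu}^+ \subset \mathcal{N}_{\lambda^*}^+$ gives $c_{\lambda^*,\mu}^+ \ge c_{\lambda^*}^+$, so one needs the reverse inequality $c_{\lambda^*,\mu}^+ \le c_{\lambda^*}^+$, which holds because any minimizer $v \in \mathcal{S}_{\lambda^*}$ (nonempty by Theorem \ref{existencelambda^*}(2)) satisfies $H_\mu(v) < 0$ (resp. $F(v) > \mu$) by Corollary \ref{submanifold}, hence $v \in \mathcal{N}_{\lambda^*,\mu}^+$ and $c_{\lambda^*,\mu}^+ \le \Phi_{\lambda^*}(v) = c_{\lambda^*}^+$. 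Therefore $c_{\lambda^*,\mu}^+ = c_{\lambda^*}^+$ and $\Phi_{\lambda^*}(u) = c_{\lambda^*}^+$, so $u \in \mathcal{S}_{\lambda^*}$. The main obstacle is the second step: obtaining strong convergence, since the truncation means Lemma \ref{limit} cannot be cited as a black box and its proof must be re-run in the truncated setting, carefully checking that the comparison competitors stay inside $\mathcal{N}_{\lambda,\mu}^+$ for all $\lambda$ near $\lambda^*$ — which is precisely what Corollary \ref{submanifold} and the monotonicity of $\Theta_\lambda$ are for.
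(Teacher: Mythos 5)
Your proposal is correct and follows essentially the same route as the paper: compare $\Phi_{\lambda_n}(u_n)=c_{\lambda_n,\mu}^+$ against the projections of a minimizer $v\in\mathcal{S}_{\lambda^*}$, which lies in the truncated set by Corollary \ref{submanifold}, to force $\Phi_{\lambda^*}(u)=c_{\lambda^*}^+$ (the paper invokes the monotonicity from Lemma \ref{decreasingfibering}(3) where you use the $\alpha$-continuity device of Lemma \ref{limit}, an interchangeable step), and then upgrade to $P_1(u_n)\to P_1(u)$ and apply (S). Note only that your entire first paragraph is redundant, since $u_n\rightharpoonup u\in\mathcal{D}_{\lambda^*}^{\mp}$ is a hypothesis of the lemma rather than something to be proved here.
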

\begin{proof} We argue as in the proof of Lemma \ref{limit}. First note that $\Phi_{\lambda^*}(u)\ge c_{\lambda^*}^+$. Suppose, by contradiction, that $\Phi_{\lambda^*}(u)>c_{\lambda^*}^+$. Recall from Theorem \ref{existencelambda^*} that there exists $v\in \mathcal{S}_{\lambda^*}$, i.e. $v \in \mathcal{N}_{\lambda^*}^+$ with $c_{\lambda^*}^+= \Phi_{\lambda^*}(v)$.
	\vskip.3cm
	{\bf Case 1: $\gamma>p$}
	\vskip.3cm
	 By Corollary \ref{submanifold} we  have $H_\mu(v)<0$. Then $w:=v/\|v\| \in \Theta_{\lambda^*}$ and $H_\mu(w)<0$.  From Lemma \ref{decreasingfibering}(3) we know that $\Phi_{\lambda}(t_\lambda(w)w)<c_{\lambda^*}^+$ for all $\lambda>\lambda^*$. Since $t_{\lambda_n}(w)w \in \mathcal{N}_{\lambda_n,\mu}^+$ we find that
	\begin{equation*}
	\Phi_{\lambda_n}(u_n)=c_{\lambda_n,\mu}^+\le \Phi_{\lambda_n}(t_{\lambda_n}(w)w)<c_{\lambda^*}^+ <\Phi_{\lambda^*}(u),
	\end{equation*}
	which contradicts $\Phi_{\lambda^*}(u)\leq \liminf  \Phi_{\lambda_n}(u_n)$. Therefore $u \in \mathcal{S}_{\lambda^*}$, and repeating the argument above with $u$ instead of $v$, we find that $\Phi_{\lambda_n}(u_n) \to  \Phi_{\lambda^*}(u)$, so that $P_1(u_n) \to P_1(u)$. Condition (S2) implies that $u_n \to u$ in $X$.
	\\
	
	{\bf Case 2: $\gamma<p$}
	\vskip.3cm
	 We have now $\limsup F(t_{\lambda_n}(w)w)=\limsup t_{\lambda_n}(w)^\gamma F(w)\ge F(v)>\mu$, i.e. $t_{\lambda_n}(w)w \in \mathcal{N}_{\lambda_n,\mu}^+$. From Lemma \ref{decreasingfibering}(3) we know that there exists $\varepsilon>0$ such that $\Phi_{\lambda}(t_\lambda(w)w)<c_{\lambda^*}^+$ for all $\lambda\in(\lambda^*,\lambda^*+\varepsilon)$. One can argue as in the previous case to reach a contradiction.
	
\end{proof}

\noindent\textbf{Proof of Theorem \ref{t3} (1).}\\

Let $u_\lambda$ be given by Theorem \ref{existenceouyanggeneral}, i.e. $u_\lambda\in \overline{\mathcal{N}_{\lambda,\mu}^+}$ satisfies $\Phi_\lambda(u_\lambda)=c_{\lambda,\mu}^+<0$.
We claim that there exists $\varepsilon>0$ such that $u_\lambda\in \mathcal{N}_{\lambda,\mu}^+$ for all  $\lambda\in (\lambda^*,\lambda^*+\varepsilon)$, i.e.  $c_{\lambda,\mu}^+$ is achieved for these values of $\lambda$.

			\vskip.3cm
	{\bf Case 1: $\gamma>p$}
	\vskip.3cm
	
	Let us prove that $H_\mu(u_\lambda)<0$ if $\lambda\in (\lambda^*,\lambda^*+\varepsilon)$, for some $\varepsilon>0$. Indeed, suppose on the contrary, that there exists a sequence $\lambda_n \searrow \lambda^*$ such that $u_n:=u_{\lambda_n}$ satisfies $H_{\mu}(u_n)= 0$ and $\Phi_{\lambda_n}(u_n)=c_{\lambda_n,\mu}^+$.
	As in the proof of Theorem \ref{existencelambda^*}(2), we can show that writing $u_n=t_nw_n$, where $w_n\in \Theta_{\lambda_n}$ and $t_n:=t_{\lambda_n}(w_n)$, up to a subsequence  $u_n\rightharpoonup u  \in \mathcal{D}_{\lambda^*}^-$. Lemma \ref{limit1} yields that $u_n \to u \in S_{\lambda^*}$. It follows from Corollary \ref{submanifold}  that $H_\mu(u)<0$. However, this is a contradiction, since $H_\mu(u)=\lim H_\mu(u_n)=0$.

	\vskip.3cm
	{\bf Case 2: $\gamma<p$}
	\vskip.3cm
	 Arguing by contradiction as in the previous case, we find that $u_n \to u \in S_{\lambda^*}$ and by Corollary \ref{submanifold} it follows that $F(u)>\mu$. However, this is a contradiction, since $F(u)=\lim F(u_n)=\mu$. Thus the existence of $\varepsilon$ is guaranteed and the proof is complete.\qed\\
	 
\subsection{A mountain-pass critical point for $\lambda>\lambda^*$} All over this section we assume (H1),  (S), (C1), (C2), and $\mu_*< \lambda^*$. Recall that $$\Phi_\lambda(u_\lambda)=c_{\lambda,\mu}^+:=\inf_{\mathcal{N}_{\lambda,\mu}^+} \Phi_\lambda, $$ and \begin{equation*}
\mathcal{N}_{\lambda,\mu}^+=
\begin{cases}
\{u\in \mathcal{N}_\lambda^+: H_\mu(u)<0\} & \mbox{ if } \gamma>p,\\
\{u\in \mathcal{N}_\lambda^+: F(u)>\mu\} & \mbox{ if }\gamma<p.
\end{cases}
\end{equation*} for $\lambda\in(\lambda^*,\lambda^*+\varepsilon)$.  The next result follows from the proof of  Theorem \ref{t3}(1):
\begin{corollary}\label{MPG1}  For each $\lambda\in(\lambda^*,\lambda^*+\varepsilon)$ there holds $\displaystyle \inf_{\partial \mathcal{N}_{\lambda,\mu}^+}\Phi_\lambda>c_{\lambda,\mu}^+$.

\end{corollary}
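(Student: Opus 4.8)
The plan is to deduce this boundary estimate directly from what was already established in the proof of Theorem \ref{t3}(1), so I expect this to be a short argument rather than a fresh compactness analysis. First I would identify what $\partial \mathcal{N}_{\lambda,\mu}^+$ actually is: since $\mathcal{N}_{\lambda,\mu}^+$ is an open (in $\mathcal{N}_\lambda^+$) subset of $\mathcal{N}_\lambda^+$ cut out by the condition $H_\mu(u)<0$ (when $\gamma>p$) or $F(u)>\mu$ (when $\gamma<p$), a point $u$ on its relative boundary satisfies $u\in \overline{\mathcal{N}_{\lambda,\mu}^+}$ together with $H_\mu(u)=0$ (resp. $F(u)=\mu$). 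So proving the corollary amounts to showing that $\Phi_\lambda$ cannot attain the value $c_{\lambda,\mu}^+$ along such boundary points, for $\lambda$ in the interval $(\lambda^*,\lambda^*+\varepsilon)$ supplied by Theorem \ref{t3}(1).

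Next I would argue by contradiction in the spirit of the proof of Theorem \ref{t3}(1): suppose there is a sequence $\lambda_n \searrow \lambda^*$ (or even a fixed $\lambda$; but the uniform statement over the interval forces us to allow $\lambda_n$) and points $v_n \in \partial \mathcal{N}_{\lambda_n,\mu}^+$ with $\Phi_{\lambda_n}(v_n)\le c_{\lambda_n,\mu}^+$, hence $\Phi_{\lambda_n}(v_n)= c_{\lambda_n,\mu}^+$ by definition of the infimum and the fact that the infimum over the closure equals $c_{\lambda_n,\mu}^+$. Writing $v_n=t_n w_n$ with $w_n\in\Theta_{\lambda_n}$ and $t_n=t_{\lambda_n}(w_n)$, I would invoke the boundedness from Proposition \ref{boundecoerci>lambda^*} and run exactly the compactness argument of Theorem \ref{existencelambda^*}(2): passing to a subsequence, $v_n\rightharpoonup v\in \mathcal{D}_{\lambda^*}^-$ (resp. $\mathcal{D}_{\lambda^*}^+$), then Lemma \ref{limit1} applies and gives $v_n\to v\in \mathcal{S}_{\lambda^*}$. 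But then Corollary \ref{submanifold} yields $H_\mu(v)<0$ (resp. $F(v)>\mu$), whereas the boundary condition $H_{\mu}(v_n)=0$ (resp. $F(v_n)=\mu$) passes to the limit by weak continuity of $P_2$ and $F$, forcing $H_\mu(v)=0$ (resp. $F(v)=\mu$) — a contradiction.

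The main obstacle I anticipate is a bookkeeping one rather than a conceptual one: I must be careful that the $\varepsilon$ appearing in the statement is the \emph{same} $\varepsilon$ as in Theorem \ref{t3}(1), so that for $\lambda\in(\lambda^*,\lambda^*+\varepsilon)$ we already know $u_\lambda\in\mathcal{N}_{\lambda,\mu}^+$ (interior point) achieving $c_{\lambda,\mu}^+$, and that the contradiction sequence $\lambda_n$ can legitimately be taken inside this interval and converging to $\lambda^*$ from above. I should also make sure the weak limit lands in the correct cone $\mathcal{D}_{\lambda^*}^-$ (resp. $\mathcal{D}_{\lambda^*}^+$): this is exactly the step carried out via \eqref{functional}, \eqref{eq3} and Lemma \ref{basic} in the proof of Theorem \ref{existencelambda^*}(2), using that $H_{\lambda^*}(v)\le \liminf H_{\lambda_n}(w_n)<0$ after controlling $t_n$ away from $0$ and $\infty$. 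Once that placement is secured, Lemma \ref{limit1} does the rest and the proof closes. I would phrase the write-up as: ``This is immediate from the proof of Theorem \ref{t3}(1): if $v_n\in\partial\mathcal{N}_{\lambda_n,\mu}^+$ with $\lambda_n\searrow\lambda^*$ and $\Phi_{\lambda_n}(v_n)=c_{\lambda_n,\mu}^+$, the same compactness argument gives $v_n\to v\in\mathcal{S}_{\lambda^*}$, and Corollary \ref{submanifold} contradicts $v\in\partial\mathcal{N}_{\lambda^*,\mu}^+$.''
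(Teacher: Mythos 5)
Your overall strategy is the one the paper intends: it offers no separate proof of this corollary, merely noting that it ``follows from the proof of Theorem \ref{t3}(1)'', and your reconstruction via the compactness argument of Theorem \ref{existencelambda^*}(2), Lemma \ref{limit1} and Corollary \ref{submanifold} is exactly that argument. There is, however, one genuine logical gap in how you set up the contradiction. The negation of the corollary only says that $\inf_{\partial\mathcal{N}_{\lambda,\mu}^+}\Phi_\lambda\le c_{\lambda,\mu}^+$ for some $\lambda$ arbitrarily close to $\lambda^*$; this hands you, for each such $\lambda$, a \emph{minimizing sequence} on the boundary, not a boundary point $v$ with $\Phi_\lambda(v)\le c_{\lambda,\mu}^+$. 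Your step ``hence $\Phi_{\lambda_n}(v_n)=c_{\lambda_n,\mu}^+$'' silently assumes the boundary infimum is attained, which is precisely the possibility that must be excluded: a priori the infimum over $\partial\mathcal{N}_{\lambda,\mu}^+$ could equal $c_{\lambda,\mu}^+$ without any boundary minimizer. The fix is routine but has to be stated: take a diagonal sequence $v_n\in\partial\mathcal{N}_{\lambda_n,\mu}^+$ with $\Phi_{\lambda_n}(v_n)\le c_{\lambda_n,\mu}^++1/n$ and check that the machinery tolerates this $o(1)$ error (it does: the bound $c_{\lambda_n,\mu}^+<c_{\lambda^*}^+<0$ still controls $t_n$ away from $0$ and $\infty$, and the strict inequalities in the proof of Lemma \ref{limit1} survive the addition of $1/n$). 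Alternatively, for a fixed $\lambda$ one can pass to the weak limit $v$ of a boundary minimizing sequence, use the identity $\Phi_\lambda=\frac{\gamma-p}{p\gamma}H_\lambda$ on $\mathcal{N}_\lambda$ together with (S) to upgrade to strong convergence, and then contradict the interiority of minimizers established in the proof of Theorem \ref{t3}(1).

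A second, smaller point: you assert that $H_\mu(v_n)=0$ ``passes to the limit by weak continuity of $P_2$ and $F$''. It does not: $H_\mu=P_1-\mu P_2$ and $P_1$ is only weakly lower semicontinuous, so weak convergence yields merely $H_\mu(v)\le\liminf H_\mu(v_n)=0$; likewise $F$ is only weakly upper semicontinuous, so in the case $\gamma<p$ one gets only $F(v)\ge\mu$. What rescues the argument is that Lemma \ref{limit1} delivers \emph{strong} convergence $v_n\to v$ via (S), after which $H_\mu(v_n)\to H_\mu(v)$ and $F(v_n)\to F(v)$ by continuity. This is exactly how the paper's proof of Theorem \ref{t3}(1) closes the corresponding step, so you should invoke the strong convergence, not weak continuity, at that point. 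With these two repairs your argument is complete and coincides with the paper's.
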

Next we deal with the set
\begin{equation*}
	B_\delta:=\{tu:u\in \overline{\mathcal{N}_{\lambda,\mu}^+},\ t\in (1-\delta,1+\delta)\},
\end{equation*}
defined for $\delta>0.$

\begin{proposition}\label{MPG2} For each $\lambda\in(\lambda^*,\lambda^*+\varepsilon)$ there exists $\delta>0$ such that $\displaystyle \inf_{\partial B_\delta}\Phi_\lambda>c_{\lambda,\mu}^+$.
\end{proposition}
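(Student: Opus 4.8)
The statement to prove is Proposition \ref{MPG2}: for $\lambda \in (\lambda^*,\lambda^*+\varepsilon)$ there is $\delta>0$ with $\inf_{\partial B_\delta}\Phi_\lambda > c_{\lambda,\mu}^+$. Here $\partial B_\delta$ decomposes naturally into two parts: the ``lateral'' boundary consisting of points $tu$ with $u \in \partial\mathcal{N}_{\lambda,\mu}^+$ and $t \in [1-\delta,1+\delta]$, and the ``top/bottom'' boundary consisting of points $tu$ with $u \in \overline{\mathcal{N}_{\lambda,\mu}^+}$ and $t \in \{1-\delta,1+\delta\}$. I would handle these two pieces separately and then take the smaller of the two resulting bounds.

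\textbf{Step 1: the lateral boundary.} On this part we have $tu$ with $u \in \partial\mathcal{N}_{\lambda,\mu}^+$. By Corollary \ref{MPG1} we know $\inf_{\partial\mathcal{N}_{\lambda,\mu}^+}\Phi_\lambda =: m > c_{\lambda,\mu}^+$. For each such $u$, the fibering map $\varphi_{\lambda,u}$ has its global minimum (case $\gamma>p$: over $D_\lambda^-$; case $\gamma<p$: over $D_\lambda^+$) at $t=1$ since $u \in \mathcal{N}_\lambda^+$, and this minimum is non-degenerate by Proposition \ref{fiberingmaps}. Moreover, by Propositions \ref{coerciindefin} and \ref{boundecoerci>lambda^*} (extended via Proposition \ref{coercivi>lambda^*}), $\overline{\mathcal{N}_{\lambda,\mu}^+}$ is bounded and bounded away from $0$, and $|H_\lambda(u)|, |F(u)|$ are controlled above and below by $\|u\|^p$ respectively $\|u\|^\gamma$ uniformly on this set. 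Writing $\varphi_{\lambda,u}(t) = \frac{1}{p}t^p H_\lambda(u) - \frac{1}{\gamma}t^\gamma F(u)$ and using these uniform bounds, one gets a uniform modulus of continuity: there is $\eta>0$ (independent of $u$) such that $|\varphi_{\lambda,u}(t) - \varphi_{\lambda,u}(1)| < \frac{m - c_{\lambda,\mu}^+}{2}$ whenever $|t-1|<\eta$. Hence for $\delta < \eta$ the lateral boundary satisfies $\Phi_\lambda(tu) > \varphi_{\lambda,u}(1) - \frac{m-c_{\lambda,\mu}^+}{2} \geq m - \frac{m - c_{\lambda,\mu}^+}{2} = \frac{m + c_{\lambda,\mu}^+}{2} > c_{\lambda,\mu}^+$.

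\textbf{Step 2: the top and bottom.} Here $tu$ with $u \in \overline{\mathcal{N}_{\lambda,\mu}^+}$ and $t = 1\pm\delta$. Since $t=1$ is the \emph{strict} global minimum of $\varphi_{\lambda,u}$ on the relevant ray, we have $\varphi_{\lambda,u}(1\pm\delta) > \varphi_{\lambda,u}(1) = \Phi_\lambda(u) \geq c_{\lambda,\mu}^+$; the point is to make this gap uniform. Using the explicit form, a direct computation (or expansion using $\varphi_{\lambda,u}'(1)=0$) gives $\varphi_{\lambda,u}(t) - \varphi_{\lambda,u}(1) = \frac{1}{p\gamma}|H_\lambda(u)|\left(\gamma t^p - p t^\gamma + (p-\gamma)\right)$ up to the sign convention of Proposition \ref{fiberingmaps}; in either case the scalar factor multiplying $|H_\lambda(u)|$ (equivalently $|F(u)|$) is a fixed positive function of $t$ near $t=1$, $t\neq1$, and $|H_\lambda(u)|$ is bounded below by a positive constant on $\overline{\mathcal{N}_{\lambda,\mu}^+}$ (by the away-from-zero/boundedness estimates). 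Therefore there is $\delta>0$ and $\rho>0$ with $\Phi_\lambda(tu) \geq \Phi_\lambda(u) + \rho \geq c_{\lambda,\mu}^+ + \rho$ for $t = 1\pm\delta$ and all $u \in \overline{\mathcal{N}_{\lambda,\mu}^+}$. Shrinking $\delta$ to satisfy both Step 1 and Step 2 and setting the final bound to $c_{\lambda,\mu}^+ + \min\{\rho, \tfrac{m-c_{\lambda,\mu}^+}{2}\}$ completes the proof.

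\textbf{Main obstacle.} The delicate point is obtaining the \emph{uniformity} in $u$ of the estimates in both steps: one must leverage that $\overline{\mathcal{N}_{\lambda,\mu}^+}$ is a bounded set bounded away from the origin on which $H_\lambda$ and $F$ are comparable to fixed powers of the norm, so that the fibering maps $\varphi_{\lambda,u}$ form an equicontinuous family near $t=1$ with a uniform quantitative non-degeneracy at the minimum. Once that is in place, the rest is the elementary calculus of the one-variable function $t \mapsto \gamma t^p - p t^\gamma + (p-\gamma)$ near $t=1$. One should also take a little care that $\partial B_\delta$ is taken in $X$, so a priori it could contain points not of the form $tu$ with $u\in\overline{\mathcal{N}_{\lambda,\mu}^+}$; but since $B_\delta$ is the image of the (relatively) compact-closure set $\overline{\mathcal{N}_{\lambda,\mu}^+} \times (1-\delta,1+\delta)$ under the continuous scaling map, its topological boundary is contained in the union of the two pieces above, so the case analysis is exhaustive.
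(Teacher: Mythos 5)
Your decomposition of $\partial B_\delta$ and your treatment of the two pieces follow essentially the same strategy as the paper, and Step 1 together with the $\gamma<p$ half of Step 2 is sound (Step 1 is in fact simpler than you make it: for $u\in\partial\mathcal{N}_{\lambda,\mu}^+\subset\mathcal{N}_\lambda^+$ the point $t=1$ is the \emph{global} minimum of $\varphi_{\lambda,u}$, so $\Phi_\lambda(tu)\ge\Phi_\lambda(u)\ge\inf_{\partial\mathcal{N}_{\lambda,\mu}^+}\Phi_\lambda>c_{\lambda,\mu}^+$ with no equicontinuity needed). The genuine gap is in Step 2 for $\gamma>p$: you assert that $|H_\lambda(u)|$ is bounded below by a positive constant on $\overline{\mathcal{N}_{\lambda,\mu}^+}$ ``by the away-from-zero/boundedness estimates,'' but no such away-from-zero estimate is available at this point. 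Proposition \ref{coerciindefin}(2) gives $\|u\|\ge c$ only on $\mathcal{N}_\lambda^-$ and only for $\lambda<\lambda^*$, while Propositions \ref{coercivi>lambda^*} and \ref{boundecoerci>lambda^*} give one-sided bounds ($F(u)\le -D\|u\|^\gamma$ on $\{H_\mu\le 0\}$, and boundedness of $\mathcal{N}_{\lambda,\mu}^+$) which, combined with $H_\lambda=F$ on $\mathcal{N}_\lambda$ and $|H_\lambda(u)|\le \lambda C_2\|u\|^p$, yield an \emph{upper} bound on $\|u\|$, not a lower one; a lower bound would require a reverse inequality of the type $-F(u)\le C\|u\|^\gamma$, which is not among the hypotheses. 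Since $\varphi_{\lambda,u}''(1)=(\gamma-p)|H_\lambda(u)|$ on $\mathcal{N}_\lambda^+$, the bound you need is exactly the claim $\varphi_{\lambda,u}''(1)>c$ to which the paper devotes the first half of its proof, and it is left unproved in your write-up.

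The missing ingredient is a compactness argument: assuming $H_\lambda(u_n)\to 0$ along a sequence in $\overline{\mathcal{N}_{\lambda,\mu}^+}$, one writes $u_n=t_nw_n$ with $w_n\in\Theta_\lambda$, uses $t_n=\bigl(H_\lambda(w_n)/F(w_n)\bigr)^{1/(\gamma-p)}$ to deduce $H_\lambda(w_n)\to 0$, invokes Lemma \ref{basic} to obtain $w_n\rightharpoonup w\neq 0$ with $P_2(w)>0$, and then the identity $H_\mu(w_n)=H_\lambda(w_n)+(\lambda-\mu)P_2(w_n)$ forces $H_\mu(w_n)>0$ for large $n$, contradicting $H_\mu(u_n)\le 0$. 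Once $\varphi_{\lambda,u}''(1)>c$ is in hand (for $\gamma<p$ it is immediate from $F(u)\ge\mu$), your explicit identity $\varphi_{\lambda,u}(t)-\varphi_{\lambda,u}(1)=\tfrac{1}{p\gamma}|H_\lambda(u)|\,\bigl|\gamma t^p-pt^\gamma+p-\gamma\bigr|$ is correct and finishes Step 2, in substance the same way the paper does via Taylor's theorem with Lagrange remainder. One further small point: your justification of the boundary containment via ``compact closure'' is not right, since $\overline{\mathcal{N}_{\lambda,\mu}^+}$ is merely closed and bounded, not compact in $X$; the containment nevertheless holds because the scaling parameter is bounded away from zero, so any limit $t_nu_n\to v$ forces $u_n\to v/t$ strongly and hence $v$ lies in the asserted set.
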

\begin{proof} Indeed, first we claim that there exists $c>0$ such that $\varphi_{\lambda,u}''(1)>c$ for all $u\in \overline{\mathcal{N}_{\lambda,\mu}^+}$. Since $\overline{\mathcal{N}_{\lambda,\mu}^+}\subset \mathcal{N}_\lambda^+$ it is clear that $\varphi_{\lambda,u}''(1)>0$ for $u\in \overline{\mathcal{N}_{\lambda,\mu}^+}$.
	\vskip.3cm
	{\bf Case 1: $\gamma>p$}
	\vskip.3cm
	Suppose, on the contrary, that there exists $(u_n) \subset \overline{\mathcal{N}_{\lambda,\mu}^+}$ such that  $\varphi_{\lambda,u_n}''(1)\to 0$. Since $\varphi_{\lambda,u_n}'(1)= 0$ for all $n$, it follows that $H_\lambda(u_n)\to 0$. We write $u_n=t_nw_n$ where $w_n\in\Theta_{\lambda}$ and
	\begin{equation}\label{eq333}
	t_n:=t_{\lambda}(w_n)=\left(\frac{H_{\lambda}(w_n)}{F(w_n)}\right)^{\frac{1}{\gamma-p}}.
	\end{equation}
	Thus $t_n^pH_\lambda(w_n)=H_\lambda(u_n) \to 0$ which, combined with \eqref{eq333}, implies that $H_\lambda(w_n)\to 0$. Therefore, we can assume that $w_n \rightharpoonup w \neq 0$ and moreover
	\begin{equation*}
	H_{\mu}(w_n)=H_\lambda(w_n)+(\lambda-\mu)P_2(w_n),
	\end{equation*}
	which implies that $H_\mu(w_n)>0$  for $n$ sufficiently large. This yields a contradiction, since $H_{\mu}(w_n)=t_n^{-p}H_{\mu}(u_n)\le 0$ for all $n$.

		\vskip.3cm
	{\bf Case 2: $\gamma<p$}
	\vskip.3cm
	It is enough to note that for $u \in  \overline{\mathcal{N}_{\lambda_n,\mu}^+}$ we have
	$\varphi_{\lambda,u}''(1)=(p-\gamma)F(u)\geq (p-\gamma)\mu$.\\
	
	  Thus  the claim is proved and  there exists $c>0$ such that $\varphi_{\lambda,u}''(1)>c$ for all $u\in \overline{\mathcal{N}_{\lambda,\mu}^+}$. This implies in particular that there exists $c>0$ such that $\|u\|\ge c$ for all $u\in \overline{\mathcal{N}_{\lambda,\mu}^+}$.  By making $c$ smaller if necessary, we can choose $\delta>0$ such that $\varphi_{\lambda,w}''(1)>c$ for all $w\in \overline{B_\delta}$. Therefore, for any $tu\in \overline{B_\delta}$ there holds
	\begin{eqnarray*}
		\Phi_\lambda(tu)-\Phi_\lambda(u)&=&\varphi_{\lambda,u}(t)-\varphi_{\lambda,u}(1) 
		=\varphi_{\lambda,u}'(1)(t-1)+\frac{1}{2}\varphi_{\lambda,u}''(\theta)(t-1)^2 \\
		&=&\frac{1}{2}\varphi_{\lambda,u}''(\theta)(t-1)^2 > \frac{c}{2}(t-1)^2,   
	\end{eqnarray*}
where $\theta\in(\min\{1,t\},\max\{1,t\})$. Now observe that
\begin{equation*}
	\partial B_\delta=\{tu: u\in  \partial \mathcal{N}_{\lambda,\mu}^+,\ t\in [-\delta,\delta]\}\cup \{su: u\in  \mathcal{N}_{\lambda,\mu}^+,\ s\in\{-\delta,\delta\}\}.
\end{equation*} 
From Corollary \ref{MPG1} we have, for $ t\in [-\delta,\delta]$ and  $ u\in  \partial \mathcal{N}_{\lambda,\mu}^+$, that
\begin{equation*}
	\Phi_\lambda(tu)>\Phi_\lambda(u)+\frac{c}{2}(t-1)^2>c_{\lambda,\mu}^+.
\end{equation*}
On the other hand, if $u\in  \mathcal{N}_{\lambda,\mu}^+$ and $s\in\{-\delta,\delta\}$, then
\begin{equation*}
	\Phi_\lambda(su)>\Phi_\lambda(u)+\frac{c}{2}(\pm \delta-1)^2\ge c_{\lambda,\mu}^++\frac{c}{2}(\pm \delta-1)^2,
\end{equation*}	
and the proof is complete.\\
\end{proof}

\noindent\textbf{Proof of Theorem \ref{t3} (2).}\\

Let $\delta>0$ be given by Proposition \ref{MPG2}. Since $B_\delta$ is bounded and $\Phi_\lambda$ is unbounded from below, we can find some $v_\lambda \in X \setminus B_\delta$ such that $\Phi_\lambda(v_\lambda)<c_\lambda$.

Therefore, setting 
\begin{equation*}
\Gamma_\lambda=\{\eta\in C([0,1],X): \eta(0)=u_\lambda,\ \eta(1)=v_\lambda\}
\end{equation*}
we infer that
\begin{equation*}
	d_\lambda=\inf_{\eta\in \Gamma_\lambda}\max_{t\in [0,1]}\Phi_\lambda(\eta(t))
\end{equation*}
 is a critical value of $\Phi_\lambda$ for $\lambda\in(\lambda^*,\lambda^*+\varepsilon)$.

\section{Applications}

Let us provide some applications of Theorems \ref{existenceouyang}, \ref{existencelambda^*} and \ref{t3}. Throughout this section $\Omega$ is a bounded domain of $\mathbb{R}^N$, with $N \ge 1$.

\subsection{Indefinite $p$-Laplace equations} \label{Secindefi}

We consider the functional associated to \eqref{eo}, i.e.
\begin{equation*}
\Phi_\lambda(u)=\frac{1}{p}\int_\Omega \left( |\nabla u|^p-\lambda (u^+)^p\right)-\frac{1}{\gamma}\int_\Omega f(x)(u^+)^{\gamma}, \quad u \in X= W_0^{1,p}(\Omega),
\end{equation*}
and some variations of it. Here $p>1$, $1<\gamma<p^*$ with $\gamma \neq p$, and $f \in L^{\infty}(\Omega)$. Recall that $p^*=\frac{Np}{N-p}$ if $p<N$ and $p^*=\infty$ if $p \ge N$.

Let $H_\lambda(u)=\int_\Omega \left( |\nabla u|^p-\lambda (u^+)^p\right)$, i.e. $P_1(u)=\int_\Omega  |\nabla u|^p =\|u\|^p$, $P_2(u)=\int_\Omega (u^+)^p$ ,and $F(u)=\int_\Omega f(x)(u^+)^{\gamma}$ for $u \in X$.
It is standard to check that $P_1$, $P_2$ and $F$ satisfy our basic assumptions. Note also that critical points of $\Phi_\lambda$ are nonnegative weak solutions of \eqref{eo}.
For this functional we have \begin{equation*}
\lambda^*:=\inf\left\{\frac{\int_\Omega |\nabla u|^p}{\int_\Omega (u^+)^p}: u \in W_0^{1,p}(\Omega) \setminus \{0\}, \int_\Omega f(x)(u^+)^{\gamma}=0 \right\}.
\end{equation*}
It is clear that $$\lambda^*\geq \inf\left\{\frac{\int_\Omega |\nabla u|^p}{\int_\Omega (u^+)^p}: u \in W_0^{1,p}(\Omega)  \right\}=\inf\left\{\frac{\int_\Omega |\nabla u|^p}{\int_\Omega |u|^p}: u \in W_0^{1,p}(\Omega)  \right\}=\lambda_1(p), $$  the first eigenvalue of $-\Delta_p$ in $W_0^{1,p}(\Omega)$. Indeed, the equality of the infima above follows from the fact that $\lambda_1(p)$ is achieved by  a positive eigenfunction $\varphi_1=\varphi_1(p)$, and the inequality
$\int_\Omega |\nabla u^+|^p \leq \int_\Omega |\nabla u|^p$ for any $u \in X$.

We shall assume that  $\int_\Omega f(x)\varphi_1^{\gamma}<0$, which clearly yields $\mu_*=\lambda_1(p)<\lambda^*$.
Let us show that (H1) also holds. Indeed, otherwise we would have $$\lambda^*>\Lambda:=\inf\left\{\frac{\int_\Omega |\nabla u|^p}{\int_\Omega (u^+)^p}: u \in W_0^{1,p}(\Omega) \setminus \{0\}, \int_\Omega f(x)(u^+)^{\gamma}\geq 0 \right\},$$
so $\Lambda$
would be achieved by some $u_0$ such that $\int_\Omega f(x)(u_0^+)^{\gamma}> 0$. In particular, $u_0$ minimizes $\frac{\int_\Omega |\nabla u|^p}{\int_\Omega (u^+)^p}$ over the open set $\{u \in W_0^{1,p}(\Omega): \int_\Omega f(x)(u^+)^{\gamma}> 0\}$. Thus $u_0$ would be a critical point of $\frac{\int_\Omega |\nabla u|^p}{\int_\Omega (u^+)^p}$, and consequently a nonnegative eigenfunction of $-\Delta_p$ associated to the eigenvalue $\Lambda$. However, the assumption $\int_\Omega f(x)\varphi_1^{\gamma}<0$ entails that $\Lambda>\lambda_1$. Finally, it is known that $\lambda_1$ is the only principal eigenvalue of $-\Delta_p$, i.e. it is the only eigenvalue associated to a nonnegative eigenfunction. So we reach a contradiction, and (H1) is proved.

The latter argument also shows that the condition $\int_\Omega f(x)\varphi_1^{\gamma}<0$  implies (C2), since $H_{\lambda^*}'(u)=0$ and $u^+ \neq 0$ means that $\lambda^*$ is a principal eigenvalue of $-\Delta_p$, i.e. $\lambda^*=\lambda_1(p)$, which is impossible. 

It is also clear that (S)  holds since $P_1(u)=\|u\|^p$ and $W_0^{1,p}(\Omega)$ is a uniformly convex space.

As for the (PS) condition, it suffices to show that any (PS) sequence is bounded (the (S+) property of the $p$-Laplacian gives then the desired conclusion). 
To this end, let us introduce the notation
\begin{equation*}
\Omega^0=\{x\in\Omega:f(x)=0\} \ \ \ \mbox{and} \ \ \ \Omega^+=\{x\in\Omega:f(x)>0\}.
\end{equation*}
More precisely, $\Omega_+$ is the largest open set where  $f>0$ a.e. We denote by $\operatorname{int}(\Omega^0)$ the interior of $\Omega^0$ and similarly we define  $\operatorname{int}(\Omega^0\cup \Omega^+)$. Given  an open, bounded and smooth set $U$, we denote by $(\lambda_1(p,U),\phi_1(U))$ the first eigenpair of $(-\Delta_p,U)$. We shall assume that $\operatorname{int}(\Omega^0)$ is smooth, so that the following property holds:
\begin{itemize}
\item [($h_0$)]If $v \in W_0^{1,p}(\Omega)$ and $fv \equiv 0$ then $v \in W_0^{1,p}(\operatorname{int}(\Omega^0))$.
\end{itemize}
This property holds, for instance, if $\operatorname{int}(\Omega^0)$ is a $p$-stable set, in the capacity sense (see e.g. \cite[Proposition 11]{CRQ}).

For $\gamma>p$ we shall prove that (PS) holds for	$\lambda<\lambda_1(p,\operatorname{int}(\Omega^0))$.
Indeed, let $(u_n) \subset X$ be such that $(\Phi_\lambda(u_n))$ 
is bounded and $\Phi_\lambda'(u_n)\to 0$. Assume by contradiction that $\|u_n\|\to \infty$ and $v_n:=\frac{u_n}{\|u_n\|} \rightharpoonup v$ in $X$. 
Since  $\Phi_\lambda'(u_n)\phi \to 0$ we find that
$\int_\Omega f(v^+)^{\gamma-1} \phi=\lim \int_\Omega f(v_n^+)^{\gamma-1} \phi=0$  for every $\phi \in X$. It follows that $fv^+ \equiv 0$ so $v^+\in W_0^{1,p}(\operatorname{int}(\Omega^0))$. Moreover, combining the fact that $(\Phi_\lambda(u_n))$ 
is bounded and $|\Phi_\lambda'(u_n)u_n|\le \varepsilon_n \|u_n\|$ with $\varepsilon_n \to 0$, we derive that $H_\lambda(v_n) \to 0$, so $v^+ \neq 0$ and $H_\lambda(v) \le 0$. It follows that $\lambda \ge  \lambda_1(p,\operatorname{int}(\Omega^0))$, a contradiction.

Now, if $\gamma<p$ then  we have now $H_\lambda'(v)\phi=\lim H_\lambda'(v_n)\phi=0$ for every $\phi \in X$. From the boundedness of $(\Phi_\lambda^+(u_n))$ it follows that $H_\lambda(v_n)\to 0$, which yields $v^+ \not \equiv 0$. Thus
$-\Delta_p v=\lambda (v^+)^{p-1}$ so that $v \ge 0$, and therefore $\lambda=\lambda_1(p)$ and $v$ is an eigenfunction associated to $\lambda_1(p)$.
Now, the fact that $(\Phi_\lambda(u_n))$ 
is bounded and $|\Phi_\lambda'(u_n)u_n|\le \varepsilon_n \|u_n\|$ with $\varepsilon_n \to 0$, yields that $\int_\Omega f(v^+)^{\gamma}=\lim \int_\Omega f(v_n^+)^{\gamma}=0$, which contradicts the assumption $\int_\Omega f(x)\varphi_1^{\gamma}<0$. Therefore in this case (PS) holds for $\lambda \neq \lambda_1(p)$.

Finally we shall prove that (C1) holds under the following additional condition:
\begin{itemize}
	\item[($f_0$)] If  $|\Omega^0|>0$ then $\operatorname{int}(\Omega^0)$ is an open, bounded and smooth set, satisfying
	\begin{equation*}
	\lambda^*<\lambda_1(p,\operatorname{int}(\Omega^0)).
	\end{equation*}
\end{itemize}

\begin{proposition}\label{f01} Assume $(h_0)$ and $(f_0)$. Then  (C1) holds true.
\end{proposition}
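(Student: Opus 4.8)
The plan is to argue by contradiction, exploiting the explicit form of $F$ and $F'$ for the $p$-Laplacian functional. Suppose $\lambda^*$ is achieved by some $u$ and, towards a contradiction, that $F'(u)=0$. Since $u$ is admissible in the definition of $\lambda^*$, we have $F(u)=\int_\Omega f(x)(u^+)^\gamma=0$; moreover the quotient $P_1(u)/P_2(u)$ is defined and equals $\lambda^*$, which forces $P_2(u)=\int_\Omega (u^+)^p>0$, so that $u^+\not\equiv 0$.

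Next I would convert $F'(u)=0$ into pointwise information on $u^+$. Since $F'(u)v=\gamma\int_\Omega f(x)(u^+)^{\gamma-1}v$ for every $v\in X$, and since $(u^+)^{\gamma-1}>0$ exactly where $u^+>0$ (recall $\gamma>1$), the condition $F'(u)=0$ forces $f(x)(u^+)^{\gamma-1}\equiv 0$, i.e.\ $f\,u^+\equiv 0$ in $\Omega$. Applying $(h_0)$ to $v=u^+\in W_0^{1,p}(\Omega)$ yields $u^+\in W_0^{1,p}(\operatorname{int}(\Omega^0))$. In particular $\operatorname{int}(\Omega^0)\neq\emptyset$ (otherwise $u^+\equiv 0$, a contradiction), so $|\Omega^0|>0$ and the hypothesis $(f_0)$ is in force.

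Finally I would invoke the variational characterization of $\lambda_1(p,\operatorname{int}(\Omega^0))$ together with the elementary inequality $\int_\Omega|\nabla u^+|^p\le\int_\Omega|\nabla u|^p$. Since $u^+$ vanishes outside $\operatorname{int}(\Omega^0)$,
\begin{equation*}
\lambda^*=\frac{\int_\Omega|\nabla u|^p}{\int_\Omega(u^+)^p}\ \ge\ \frac{\int_\Omega|\nabla u^+|^p}{\int_\Omega(u^+)^p}\ =\ \frac{\int_{\operatorname{int}(\Omega^0)}|\nabla u^+|^p}{\int_{\operatorname{int}(\Omega^0)}(u^+)^p}\ \ge\ \lambda_1(p,\operatorname{int}(\Omega^0)),
\end{equation*}
which contradicts $(f_0)$. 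Hence $F'(u)\neq 0$, and (C1) holds.

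The computation is short and I do not expect a genuine obstacle; the only points requiring a little care are the reduction $F'(u)=0\Rightarrow f\,u^+\equiv 0$ (immediate from the explicit form of $F'$, using $\gamma>1$ and $u^+\ge 0$) and the role of the dichotomy $|\Omega^0|=0$ versus $|\Omega^0|>0$, which is handled automatically since $\operatorname{int}(\Omega^0)=\emptyset$ would already force $u^+\equiv 0$. The essential ingredients are simply $(h_0)$, used to place $u^+$ in $W_0^{1,p}(\operatorname{int}(\Omega^0))$, and $(f_0)$, used to rule out the resulting lower bound on $\lambda^*$.
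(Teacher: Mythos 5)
Your proof is correct and follows essentially the same route as the paper: deduce $f\,(u^+)^{\gamma-1}\equiv 0$ from $F'(u)=0$, place $u^+$ in $W_0^{1,p}(\operatorname{int}(\Omega^0))$ via $(h_0)$, and contradict $(f_0)$ through the Rayleigh quotient bound $\lambda^*\ge\lambda_1(p,\operatorname{int}(\Omega^0))$. Your treatment is in fact slightly more careful than the paper's on two minor points (the explicit handling of the case $\operatorname{int}(\Omega^0)=\emptyset$, and the use of $\int_\Omega|\nabla u^+|^p\le\int_\Omega|\nabla u|^p$ where the paper's displayed chain is written a bit loosely), but these are cosmetic differences, not a different argument.
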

\begin{proof} Indeed, suppose on the contrary that $F'(u)=0$ and $u$ achieves $\lambda^*$. It follows that $f(u^+)^{\gamma-1}\equiv 0$ and hence  $u^+\in W_0^{1,p}(\operatorname{int}(\Omega^0))$ by $(h_0)$. It follows that
	\begin{equation}\label{eqo1}
		\lambda_1(p,\operatorname{int}(\Omega^0))\le \frac{\int_{\operatorname{int}(\Omega^0)} |\nabla u^+|^p}{\int_{\operatorname{int}(\Omega^0)} (u^+)^p}\le  \frac{\int_{\operatorname{int}(\Omega^0)} |\nabla u|^p}{\int_{\operatorname{int}(\Omega^0)} (u^+)^p}=\lambda^*,
	\end{equation}
which contradicts $(f_0)$. Therefore $F'(u)\neq 0$.

	
\end{proof}

Let us discuss on the condition $(f_0)$. It is clear that 
	\begin{equation*}
	\lambda^*\le \lambda_1(p,\operatorname{int}(\Omega^0)).
	\end{equation*}
	We shall provide some conditions that ensure the inequality.

	\begin{corollary}\label{f02} Suppose that $\operatorname{int}(\Omega^0)$ and $\operatorname{int}(\overline{\Omega^0}\cup \overline{\Omega^+})$ are  bounded and smooth sets. If $(h_0)$ holds, and
		\begin{equation*}
		\lambda_1(p,\operatorname{int}(\overline{\Omega^0}\cup \overline{\Omega^+}))<\lambda_1(p,\operatorname{int}(\Omega^0)),
		\end{equation*}
	then (C1) holds true.
	
\end{corollary}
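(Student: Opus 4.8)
The plan is to deduce the statement from Proposition \ref{f01}: since $(h_0)$ is among the hypotheses, it suffices to verify condition $(f_0)$, and as the smoothness of $\operatorname{int}(\Omega^0)$ is also assumed, the only thing left to prove is the strict inequality $\lambda^*<\lambda_1(p,\operatorname{int}(\Omega^0))$ (when $|\Omega^0|=0$ the condition $(f_0)$ is vacuous). The idea is to pass through the intermediate set $V:=\operatorname{int}(\overline{\Omega^0}\cup\overline{\Omega^+})$, which by hypothesis satisfies $\lambda_1(p,V)<\lambda_1(p,\operatorname{int}(\Omega^0))$; thus it suffices to establish $\lambda^*\le\lambda_1(p,V)$.

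To prove this last inequality I would use the variational characterization of $\lambda^*$ furnished by (H1), namely $\lambda^*=\inf\{P_1(u)/P_2(u): u\neq 0,\ F(u)\ge 0\}$, tested against the first eigenfunction of $(-\Delta_p,V)$. Let $\phi$ be that eigenfunction, normalized so that $\phi>0$ in $V$, and extend it by zero to $\Omega$; since $V$ is smooth, the extension (still denoted $\phi$) belongs to $W_0^{1,p}(\Omega)\setminus\{0\}$. The key point is that $f\ge 0$ a.e.\ on $V$: indeed $f=0$ on $\Omega^0$, while $f>0$ a.e.\ on $\Omega^+$ because $\Omega^+$ is the largest open set on which $f>0$ a.e., and the residual sets $\overline{\Omega^0}\setminus\Omega^0$ and $\overline{\Omega^+}\setminus\Omega^+$ are Lebesgue-null under the standing smoothness assumptions. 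Hence $F(\phi)=\int_V f(\phi^+)^\gamma\ge 0$, so $\phi$ is admissible and
\[
\lambda^*\le\frac{P_1(\phi)}{P_2(\phi)}=\frac{\int_V|\nabla\phi|^p}{\int_V(\phi^+)^p}=\lambda_1(p,V).
\]

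Combining this with the hypothesis $\lambda_1(p,V)<\lambda_1(p,\operatorname{int}(\Omega^0))$ gives $\lambda^*<\lambda_1(p,\operatorname{int}(\Omega^0))$, which is precisely $(f_0)$; Proposition \ref{f01} then yields (C1). I expect the only genuine obstacle to be the measure-theoretic assertion $f\ge 0$ a.e.\ on $V$: one must exclude that the boundary portions of $\overline{\Omega^0}\cup\overline{\Omega^+}$ that are absorbed into the interior when forming $V$ intersect $\{f<0\}$ in positive measure, and it is here that the smoothness of $\operatorname{int}(\Omega^0)$ and of $\operatorname{int}(\overline{\Omega^0}\cup\overline{\Omega^+})$ comes into play. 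Everything else reduces to the domain-monotonicity of $\lambda\mapsto\lambda_1(p,\cdot)$ and the standard extension-by-zero argument.
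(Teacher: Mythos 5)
Your proposal is correct and follows essentially the same route as the paper: the paper's entire proof is the observation that $\lambda^*\le\lambda_1(p,\operatorname{int}(\overline{\Omega^0}\cup\overline{\Omega^+}))$ (declared ``clear''), whence $(f_0)$ holds and Proposition \ref{f01} applies. What you add is the actual justification of that inequality — testing the (H1)-characterization of $\lambda^*$ against the zero-extension of the first eigenfunction of $V$ — together with an honest flag of the only real subtlety (that $f\ge 0$ a.e.\ on $V$), which the paper leaves entirely implicit.
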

\begin{proof} Indeed, it is clear that
	\begin{equation*}
		\lambda^*\le 	\lambda_1(p,\operatorname{int}(\overline{\Omega^0}\cup \overline{\Omega^+})),
	\end{equation*}
and hence $(f_0)$ is satisfied, which implies, by Proposition \ref{f01}, the condition (C1).
	
\end{proof}
\begin{corollary}  Suppose that $\operatorname{int}(\Omega^0)$ and $\operatorname{int}(\overline{\Omega^0}\cup \overline{\Omega^+})$ are bounded smooth domains. If $(h_0)$ holds and $\operatorname{int}(\Omega^0)$ is a proper subset of $\operatorname{int}(\overline{\Omega^0}\cup \overline{\Omega^+})$, then (C1) holds true. In particular, if there exists a ball $B\subset \operatorname{int}(\overline{\Omega^0}\cup \overline{\Omega^+})$ such that $B\cap \Omega^0\neq \emptyset$ and $B\cap \Omega^+\neq \emptyset$, then (C1) holds true.
\end{corollary}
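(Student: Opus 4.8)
The plan is to reduce the first assertion to Corollary~\ref{f02} and the second assertion to the first. I would set $U:=\operatorname{int}(\Omega^0)$ and $V:=\operatorname{int}(\overline{\Omega^0}\cup\overline{\Omega^+})$; by hypothesis these are bounded smooth domains, $(h_0)$ holds, and $U\subset V$ because $U$ is open with $U\subset\Omega^0\subset\overline{\Omega^0}\cup\overline{\Omega^+}$. In view of Corollary~\ref{f02}, the whole task is then to prove the \emph{strict} inequality $\lambda_1(p,V)<\lambda_1(p,U)$. The non-strict inequality $\lambda_1(p,V)\le\lambda_1(p,U)$ is immediate: the inclusion $U\subset V$ gives, via extension by zero, an embedding $W_0^{1,p}(U)\hookrightarrow W_0^{1,p}(V)$, and comparing Rayleigh quotients over these spaces yields the bound. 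So everything comes down to strictness under the proper inclusion $U\subsetneq V$.

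To obtain strictness I would argue by contradiction, and this is where the only real work lies. Suppose $\lambda_1(p,V)=\lambda_1(p,U)=:\Lambda$. Let $\phi>0$ be a first eigenfunction of $(-\Delta_p,U)$ and let $\widetilde\phi\in W_0^{1,p}(V)$ be its extension by zero. Then $\widetilde\phi$ attains the value $\Lambda=\lambda_1(p,V)$ of the Rayleigh quotient on $V$, hence is a minimizer and therefore a first eigenfunction of $(-\Delta_p,V)$; in particular $\widetilde\phi$ is a nonnegative, nontrivial weak solution of $-\Delta_p\widetilde\phi=\Lambda\,\widetilde\phi^{\,p-1}$ in $V$. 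Since $V$ is connected, the strong maximum principle for the $p$-Laplacian (as in the classical work of Anane and Lindqvist on the principal eigenvalue) forces $\widetilde\phi>0$ everywhere in $V$; but $\widetilde\phi\equiv0$ on the non-empty set $V\setminus U$, a contradiction. This yields $\lambda_1(p,V)<\lambda_1(p,U)$, and (C1) follows from Corollary~\ref{f02}. The main obstacle is precisely this step: one must invoke the strong maximum principle to exclude a nontrivial nonnegative eigenfunction vanishing on an open chunk of the domain; everything else is routine domain-monotonicity of the first Dirichlet eigenvalue.

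For the ``in particular'' part I would only need to check that the stated ball condition forces the proper inclusion $U\subsetneq V$, after which the first part applies verbatim. Given a ball $B\subset V$ with $B\cap\Omega^+\neq\emptyset$, I would pick $x\in B\cap\Omega^+\subset V$; by definition of $\Omega^+$ there is an open neighbourhood $W\ni x$ with $f>0$ a.e.\ on $W$, whereas every point of $\operatorname{int}(\Omega^0)$ has a neighbourhood on which $f\equiv0$. Consequently $W$ cannot meet $\operatorname{int}(\Omega^0)$, so $x\in V\setminus U$ and $U\subsetneq V$; the extra hypothesis $B\cap\Omega^0\neq\emptyset$ just ensures that the problem on $U$ is non-degenerate. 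Hence the first part of the corollary yields (C1).
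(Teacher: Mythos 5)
Your argument is correct and follows essentially the same route as the paper: both reduce the statement to Corollary \ref{f02} by establishing the strict monotonicity $\lambda_1(p,\operatorname{int}(\overline{\Omega^0}\cup \overline{\Omega^+}))<\lambda_1(p,\operatorname{int}(\Omega^0))$ under proper inclusion, and then check that the ball condition forces that inclusion to be proper. The only difference is that the paper simply cites this strict domain monotonicity of the first eigenvalue (Cuesta, Proposition 4.4), whereas you reprove it via extension by zero and the strong maximum principle, which is precisely the standard proof of the cited fact.
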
 
\begin{proof}Indeed, if $\operatorname{int}(\Omega^0)$ is a proper subset of $\operatorname{int}(\overline{\Omega^0}\cup \overline{\Omega^+})$, then (see e.g. \cite[Proposition 4.4]{C})
	\begin{equation*}
	\lambda_1(p,\operatorname{int}(\overline{\Omega^0}\cup \overline{\Omega^+}))<\lambda_1(p,\operatorname{int}(\Omega^0)),
\end{equation*}	
and from Corollary \ref{f02} we obtain (C1). To conclude, it is clear that if such a ball exists, then $\operatorname{int}(\Omega^0)$ is a proper subset of $\operatorname{int}(\overline{\Omega^0}\cup \overline{\Omega^+})$.\\
\end{proof}

\begin{remark}
	It is worth pointing out that if $\gamma>p$, $f \leq 0$ and $|\Omega_0|>0$ then  one may still consider $c_{\lambda^*}^+$. However, condition (C1) fails in this case, since  $F(u)=0$ clearly implies $F'(u)=0$. This fact is not a technical issue, since one can show that $\Phi_\lambda$ has no nontrivial critical point for $\lambda\ge \lambda_1(p,\operatorname{int}(\Omega^0))$.
	\end{remark}

Thus we infer the following result:
\begin{corollary}
Let $p>1$ and $\gamma \in (1,p^*)$ with $\gamma \ne p$. Assume that $f \in L^{\infty}(\Omega)$ satisfies $(h_0),(f_0)$, and $\int_\Omega f(x)\varphi_1^{\gamma}<0$. Then the conclusions of Theorems \ref{existenceouyang}, \ref{existencelambda^*} and \ref{t3} hold true.
\end{corollary}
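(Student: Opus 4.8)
The plan is simply to verify that the energy functional $\Phi_\lambda$ associated to \eqref{eo} fulfils all the hypotheses required by Theorems \ref{existenceouyang}, \ref{existencelambda^*} and \ref{t3}; most of this has already been carried out in the discussion preceding the statement, so the proof is essentially bookkeeping. With $P_1(u)=\|u\|^p$, $P_2(u)=\int_\Omega(u^+)^p$ and $F(u)=\int_\Omega f(u^+)^\gamma$, the basic structural assumptions (homogeneity, the sign conditions, the growth bounds, the existence of $u_1,u_2,u_3$, and the weak (semi)continuity of $P_1,P_2,F$) are standard, using the compact embedding $W_0^{1,p}(\Omega)\hookrightarrow L^\gamma(\Omega)$ valid for $\gamma<p^*$. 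The hypothesis $\int_\Omega f\varphi_1^\gamma<0$ yields $\mu_*=\lambda_1(p)<\lambda^*$ and, via the uniqueness of the principal eigenvalue of $-\Delta_p$, condition (H1); the same uniqueness argument gives (C2), since $H_{\lambda^*}'(u)=0$ with $u^+\not\equiv 0$ would force $\lambda^*=\lambda_1(p)$, contradicting $\lambda^*>\mu_*=\lambda_1(p)$. Condition (S) is immediate from $P_1(u)=\|u\|^p$ and the uniform convexity of $W_0^{1,p}(\Omega)$, and (C1) is exactly Proposition \ref{f01}, which applies under $(h_0)$ and $(f_0)$. Since $\lambda<\lambda^*$ and (H1) are all that Theorem \ref{existenceouyang} needs, and (H1), (S), (C1), (C2) together with $\mu_*<\lambda^*$ are what Theorem \ref{existencelambda^*} needs, both theorems apply at once.

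It remains to secure the extra hypotheses of Theorem \ref{t3}, namely (PS) and, for the mountain-pass step, the unboundedness of $\Phi_\lambda$ from below on $\mathcal{N}_\lambda^+$ for $\lambda>\lambda^*$. For $\gamma>p$, (PS) was shown to hold for all $\lambda<\lambda_1(p,\operatorname{int}(\Omega^0))$, while $(f_0)$ gives $\lambda^*<\lambda_1(p,\operatorname{int}(\Omega^0))$ (with the convention that this value is $+\infty$ when $\operatorname{int}(\Omega^0)$ is empty); shrinking if necessary the $\varepsilon>0$ provided by Theorem \ref{t3}(1), we may assume $(\lambda^*,\lambda^*+\varepsilon)\subset(\lambda^*,\lambda_1(p,\operatorname{int}(\Omega^0)))$, so (PS) is available on the whole interval and $c_\lambda^+=-\infty$ there by Lemma \ref{unboundedenergy}. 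For $\gamma<p$, (PS) holds for every $\lambda\neq\lambda_1(p)=\mu_*<\lambda^*$, hence automatically on $(\lambda^*,\lambda^*+\varepsilon)$, while the identity $c_\lambda^+=-\infty$ for $\lambda>\lambda^*$ is not covered by Lemma \ref{unboundedenergy} but follows from Remark \ref{unbogamma<p}, because $\{u\in W_0^{1,p}(\Omega):\int_\Omega f(u^+)^\gamma>0\}$ is pathwise connected. With all hypotheses in force, the conclusions of Theorems \ref{existenceouyang}, \ref{existencelambda^*} and \ref{t3} follow.

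The main obstacle I anticipate is the coordination of the $\lambda$-ranges: the abstract results deliver an interval $(\lambda^*,\lambda^*+\varepsilon)$ with no control on $\varepsilon$, whereas (PS) — and, in the subhomogeneous case $\gamma<p$, the unboundedness of $\Phi_\lambda$ from below — are only known on ranges dictated by $\lambda_1(p,\operatorname{int}(\Omega^0))$ and by the connectedness of $\{F>0\}$. Reconciling these amounts to shrinking $\varepsilon$ and, for $\gamma<p$, to the separate connectedness argument underlying Remark \ref{unbogamma<p}; everything else is a direct application of the facts already established.
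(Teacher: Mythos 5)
Your proposal is correct and follows essentially the same route as the paper, whose proof of this corollary is exactly the bookkeeping you perform in the preceding discussion: verification of the basic structural assumptions, $\mu_*=\lambda_1(p)<\lambda^*$ and (H1), (C2) via uniqueness of the principal eigenvalue, (S) via uniform convexity, (C1) via Proposition \ref{f01} under $(h_0)$ and $(f_0)$, and (PS) on ranges compatible with $(\lambda^*,\lambda^*+\varepsilon)$ after shrinking $\varepsilon$. The only slight divergence is your justification of $c_\lambda^+=-\infty$ for $\gamma<p$ and $\lambda>\lambda^*$ by an unproved assertion that $\{F>0\}$ is pathwise connected, whereas the paper's Remark \ref{unboundedexamples} secures this fact for the functional of Section 3.1 directly, using (C1) to produce $w$ with $H_\lambda(w)<0<F(w)$; citing that remark instead closes this small loose end.
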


The previous result has been established for $\gamma>p$ in \cite{IS}, whereas for $\gamma<p$ it extends the analysis carried  out in \cite{B} for $p=2$ and in \cite{KRQU} for $p>1$, both dealing with $\lambda<\lambda^*$.

Let us consider now the Neumann problem
\begin{equation}
\label{np} -\Delta_p u =\lambda |u|^{p-2}u +f(x)|u|^{\gamma-2}u, \quad u \in W^{1,p}(\Omega),
\end{equation}
In comparison with the functional of the Dirichlet problem, a slight modification is needed. Namely, we set
\begin{equation}
\Phi_\lambda(u)=\frac{1}{p}\int_\Omega \left( |\nabla u|^p+(u^-)^p-\lambda (u^+)^p\right)-\frac{1}{\gamma}\int_\Omega f(x)(u^+)^{\gamma},
\end{equation}
for  $u \in W^{1,p}(\Omega)$. It is clear that critical poins of this functional are nonnegative and thus solutions of \eqref{np}.
We have then
\begin{equation*}
\lambda^*:=\inf\left\{\frac{\int_\Omega |\nabla u|^p+(u^-)^p}{\int_\Omega (u^+)^p}: u \in W^{1,p}(\Omega) \setminus \{0\}, \int_\Omega f(x)(u^+)^{\gamma}=0 \right\}.
\end{equation*}
As in the previous problem, one can show that $$\lambda^*\geq \inf\left\{\frac{\int_\Omega |\nabla u|^p+(u^-)^p}{\int_\Omega (u^+)^p}: u \in W^{1,p}(\Omega)  \right\}=\inf\left\{\frac{\int_\Omega |\nabla u|^p}{\int_\Omega |u|^p}: u \in W^{1,p}(\Omega)  \right\}=0, $$
and the inequality holds if $\int_\Omega f<0$. This condition also yields $\mu_*=0<\lambda^*$, (H1), and (C2). It is also clear that (S) is satisfied and proceeding as in the Dirichlet case one can show that (PS) holds for $\lambda \neq 0$.
Finally, we note that $(f_0)$ is weaker than in the Dirichlet case, since the infimum in the definition of $\lambda^*$ is taken over $W^{1,p}(\Omega)$ whereas $\lambda_1(p,\operatorname{int}(\Omega^0))$ remains unchanged. 

 A similar analysis applies to the functionals

\begin{equation}\label{np1}
\Phi_\lambda^1(u)=\frac{1}{p}\int_\Omega \left( |\nabla u|^p+(u^-)^p-\lambda (u^+)^p\right)-\frac{1}{\gamma}\int_{\partial \Omega} f(x)(u^+)^{\gamma},
\end{equation}
and
\begin{equation}\label{np2}
\Phi_\lambda^2(u)=\frac{1}{p}\left( \int_\Omega |\nabla u|^p+(u^-)^p-\lambda \int_{\partial \Omega} (u^+)^p\right)-\frac{1}{\gamma}\int_\Omega f(x)(u^+)^{\gamma},
\end{equation}

defined in $X=W^{1,p}(\Omega)$. These functionals are respectively associated to the problems
\[
\left\{
\begin{array}
[c]{lll}%
-\Delta_p u =\lambda |u|^{p-2}u & \mathrm{in} & \Omega,\\
|\nabla u|^{p-2}\partial_n u=f(x)|u|^{\gamma-2}u  & \mathrm{on} & \partial\Omega.
\end{array}
\right.   
\quad \mbox{and} \quad 
\left\{
\begin{array}
[c]{lll}%
-\Delta_p u =f(x)|u|^{\gamma-2}u & \mathrm{in} & \Omega,\\
|\nabla u|^{p-2}\partial_n u=\lambda |u|^{p-2}u  & \mathrm{on} & \partial\Omega.
\end{array}
\right.   
\]
We refer to \cite{A,CT,KRQU1,KRQU2} for previous results on \eqref{np}, \eqref{np1}, and \eqref{np2} with $1<\gamma<p=2$. 
\medskip
\subsection{$(p,q)$-Laplacian problems} \label{pqlapl}
We consider now the functional 
\begin{equation}\label{fpq}
 \Phi_\lambda(u)=\frac{1}{p}\int_\Omega \left(|\nabla u|^p- \lambda(u^+)^p \right)+\frac{1}{q}\int_\Omega \left(|\nabla u|^q- \beta(x)(u^+)^q \right) \quad u \in W_0^{1,p}(\Omega),
\end{equation}
whose critical points are nonnegative solutions of the $(p,q)$-Laplacian problem
 \begin{equation} \label{pq}
-\Delta_p u -\Delta_q u = \lambda |u|^{p-2}u+\beta(x) |u|^{q-2}u, \quad u \in W_0^{1,p}(\Omega),
\end{equation}
where $1<q<p$, $\lambda \in \mathbb{R}$ and $\beta \in L^{\infty}(\Omega)$ is nonnegative and nontrivial. This problem, with $\beta$ constant, has been recently studied in \cite{BT,BT2,BT3}.

Here $H_\lambda(u)=\int_\Omega \left( |\nabla u|^p-\lambda (u^+)^p\right)$, whereas $F$ is given now by 
$$F(u)=-\int_\Omega \left(|\nabla u|^q- \beta(x)(u^+)^q \right),$$
so that
\begin{equation*}
\lambda^*=\lambda^*(\beta):=\inf\left\{\frac{\int_\Omega |\nabla u|^p}{\int_\Omega (u^+)^p}: u \in W_0^{1,p}(\Omega) \setminus \{0\}, \int_\Omega \left(|\nabla u|^q- \beta(x)(u^+)^q \right) =0 \right\}.
\end{equation*}
We still have $\lambda^* \ge \lambda_1(p)$, and the inequality holds if, and only if, $F(\varphi_p)<0$,  i.e.
$\int_\Omega \left(|\nabla \varphi_p|^q- \beta(x)\varphi_p^q \right)>0$, where $\varphi_p:=\varphi_1(p)$. 
Arguing as in the previous subsection, we can show that this condition implies (H1) and (C2).

 To check (C1), assume by contradiction that $\lambda^*$ is achieved by some $u$ such that $F'(u)=0$. It follows that
$\int_\Omega |\nabla u|^{q-2} \nabla u \nabla \phi=\int_\Omega \beta(x)(u^+)^{q-1}\phi$ for every $\phi \in X$. which yields $u \ge 0$. Moreover, even though this equation holds in $(W_0^{1,p}(\Omega))^*$, one can show that $\lambda_1(\beta,q)=1$, where
$$\lambda_1(\beta,q):=\inf\left\{\frac{\int_\Omega |\nabla u|^q}{\int_\Omega \beta(x)|u|^q}: u \in W_0^{1,q}(\Omega)  \right\}.$$ Thus (C1) holds if $\lambda_1(\beta,q) \ne 1$. Note also that $F$ takes positive values if, and only if, $\lambda_1(\beta,q)<1$. Finally, proceeding as in the previous subsection (in the case $\gamma<p$), one can show that (PS) holds for any $\lambda\ne \lambda_1(p)$ if $F(\varphi_p)<0$.
Summing up, we derive the following result: 

\begin{corollary}
	Let $1<q<p$, $\lambda \in \mathbb{R}$ and $\beta \in L^{\infty}(\Omega)$ be nonnegative with $\lambda_1(\beta,q)<1$.
	\begin{enumerate}
\item If $\lambda <\lambda_1(p)$ then $c_\lambda^+$ is achieved, i.e. \eqref{pq} has a nonnegative solution $u_+ \in \mathcal{N}_\lambda^+$.
\item If $\int_\Omega \left(|\nabla \varphi_p|^q- \beta(x)\varphi_p^q \right)>0$ then $\lambda^*>\lambda_1(p)$, and \eqref{pq} has:
\begin{enumerate}
\item one nonnnegative solution $u_+ \in \mathcal{N}_\lambda^+$ for $\lambda=\lambda_1(p)$.
\item two nonnegative solutions $u_+ \in \mathcal{N}_\lambda^+$ and $u_- \in \mathcal{N}_\lambda^-$ for $\lambda_1(p)<\lambda<\lambda^*$. Moreover, there exists  $\varepsilon>0$ such that \eqref{pq} has two nonnegative solutions for $\lambda^* \leq \lambda<\lambda^* +\varepsilon$.
\end{enumerate}
	\end{enumerate}		
\end{corollary}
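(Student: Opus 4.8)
The plan is to read off the three items from Theorems~\ref{existenceouyang}, \ref{existencelambda^*} and~\ref{t3}, after recording which structural hypotheses are in force. Recall that here $\gamma=q<p$ and $P_1(u)=\|u\|^p$, so (S) holds by uniform convexity of $W_0^{1,p}(\Omega)$; moreover (C1) holds because $\lambda_1(\beta,q)\ne 1$, and $F$ attains positive values precisely because $\lambda_1(\beta,q)<1$. Under the extra hypothesis of item (2), namely $\int_\Omega(|\nabla\varphi_p|^q-\beta(x)\varphi_p^q)>0$ (equivalently $F(\varphi_p)<0$), the discussion preceding the statement yields in addition (H1), (C2), $\lambda^*>\lambda_1(p)$, $\mu_*=\lambda_1(p)$ (so $\mu_*<\lambda^*$), and the (PS) condition for every $\lambda\ne\lambda_1(p)$. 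I will use these facts freely.

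For item (1), let $\lambda<\lambda_1(p)$. By the Poincar\'e inequality in $W_0^{1,p}(\Omega)$ together with $\int_\Omega|\nabla u^+|^p\le\int_\Omega|\nabla u|^p$, there is $c_\lambda>0$ with $H_\lambda(u)\ge c_\lambda\|u\|^p$ for every $u\in X$; in particular $\lambda<\lambda^*$ and $D_\lambda^-=D_\lambda^0=\emptyset$. Since also $F(u)\le C\|u\|^q$ with $q<p$, the functional $\Phi_\lambda=\frac1pH_\lambda-\frac1qF$ is coercive and weakly lower semicontinuous on $X$, hence attains its (finite) infimum; this infimum is negative because $\varphi_{\lambda,v}(t)<0$ for $t>0$ small whenever $F(v)>0$, and such $v$ exists as $\lambda_1(\beta,q)<1$. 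The minimizer $u_+$ is therefore a nonzero critical point of $\Phi_\lambda$, belongs to $\mathcal{N}_\lambda=\mathcal{N}_\lambda^+$ (as $\mathcal{N}_\lambda^0=D_\lambda^0=\emptyset$ and $\mathcal{N}_\lambda^-\subset D_\lambda^-=\emptyset$ by Proposition~\ref{fiberingmaps}), satisfies $\Phi_\lambda(u_+)=c_\lambda^+<0$, and is a nonnegative weak solution of~\eqref{pq}. (Alternatively one may invoke Theorem~\ref{existenceouyang}(2), whose only use of (H1) is the positivity of $H_\lambda$ on $\{F\ge0\}$, available here in the stronger form above.)

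For item (2), assume $F(\varphi_p)<0$, so that $\lambda^*>\lambda_1(p)=\mu_*$. Item (2)(a) and the first assertion of (2)(b) follow from Theorem~\ref{existenceouyang}(2): for $\lambda_1(p)\le\lambda<\lambda^*$ the set $\mathcal{N}_\lambda$ is a $C^1$ manifold, $\mathcal{N}_\lambda^+\ne\emptyset$ (since $\lambda<\lambda^*<\mu^*$), and $c_\lambda^+$ is achieved, giving $u_+\in\mathcal{N}_\lambda^+$; if in addition $\mu_*<\lambda$, i.e.\ $\lambda_1(p)<\lambda<\lambda^*$, then also $c_\lambda^-$ is achieved, giving a second solution $u_-\in\mathcal{N}_\lambda^-$, distinct from $u_+$ since $\Phi_\lambda(u_-)=c_\lambda^->0>c_\lambda^+=\Phi_\lambda(u_+)$. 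For the remaining range, at $\lambda=\lambda^*$ Theorem~\ref{existencelambda^*} produces a critical point in $\mathcal{N}_{\lambda^*}^0$ of zero energy together with a critical point in $\mathcal{N}_{\lambda^*}^+$ of energy $c_{\lambda^*}^+<0$ --- two distinct nonnegative solutions; and for $\lambda\in(\lambda^*,\lambda^*+\varepsilon)$ Theorem~\ref{t3} produces the local minimizer $u_\lambda\in\mathcal{N}_\lambda^+$ and, invoking (PS) (valid because $\lambda>\lambda^*>\lambda_1(p)$), a mountain-pass critical point at a strictly higher energy level --- again two distinct nonnegative solutions. Shrinking $\varepsilon$ so that all of the above holds simultaneously completes the proof.

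The argument is essentially bookkeeping, all the analytic content lying in Section~2 and in the verification of (H1), (S), (C1), (C2) and (PS) for this functional carried out above. The one point requiring a little care is item (1) in the range where the hypothesis of item (2) is not assumed: there (H1) is not among the listed facts, so one must either verify it directly (using that $\lambda_1(p)$ is the unique principal eigenvalue of $-\Delta_p$) or, as above, sidestep it through the global coercivity of $\Phi_\lambda$ for $\lambda<\lambda_1(p)$.
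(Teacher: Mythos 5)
Your proof is correct and follows essentially the same route as the paper, which obtains this corollary by verifying (H1), (S), (C1), (C2), (PS) and $\mu_*=\lambda_1(p)<\lambda^*$ for this functional and then reading off Theorems \ref{existenceouyang}, \ref{existencelambda^*} and \ref{t3}. Your extra care in item (1) — noting that (H1) is only established under $F(\varphi_p)<0$ and replacing it by the global coercivity of $H_\lambda$ (hence of $\Phi_\lambda$) for $\lambda<\lambda_1(p)$ — is a legitimate and welcome clarification of a point the paper leaves implicit.
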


The previous result extends \cite[Theorem 2.7]{BT2}, which deals with $\beta$ constant. In this case, the condition $\lambda_1(\beta,q) < 1$ reads as $\beta>\lambda_1(q)$, whereas $\int_\Omega \left(|\nabla \varphi_p|^q- \beta(x)\varphi_p^q \right)>0$ becomes now $\beta<\beta_*:=\frac{\int_\Omega |\nabla \varphi_p|^q}{\int_\Omega \varphi_p^q}$. Let us note that in \cite[Theorem 2.7]{BT2} the roles of $\lambda$ and $\beta$ are interchanged (see \cite[Section 3.2]{FRQS} for more details).

Similarly to the problem of the previous section, we may also consider the functional
\begin{equation}
\Phi_\lambda(u)=\frac{1}{p}\int_\Omega \left(|\nabla u|^p+(u^-)^p-\lambda(u^+)^p \right)+\frac{1}{q}\int_\Omega \left(|\nabla u|^q- \beta(x)(u^+)^q \right) \quad u \in W_0^{1,p}(\Omega),
\end{equation}
on $X=W^{1,p}(\Omega)$. In this case, we have $\lambda_1(p)=0$ and $\varphi_p$ is a positive constant. Hence the condition $\int_\Omega \left(|\nabla \varphi_p|^q- \beta(x)\varphi_p^q \right)>0$ reads $\int_\Omega \beta<0$, so that we need $\beta^- \not \equiv 0$. On the other hand, $F$ take positive values only if  $\beta^+ \not \equiv 0$. Thus $\beta$ has to change sign. We derive then the following result on the problem
\begin{equation} \label{pqn}
-\Delta_p u -\Delta_q u = \lambda |u|^{p-2}u+\beta(x) |u|^{q-2}u, \quad u \in W^{1,p}(\Omega).
\end{equation}
\medskip
\begin{corollary}
	Let $1<q<p$, $\lambda \in \mathbb{R}$ and $\beta \in L^{\infty}(\Omega)$ be  sign-changing annd such that 
$$	\inf\left\{\frac{\int_\Omega |\nabla u|^q}{\int_\Omega \beta(x)|u|^q}: u \in W^{1,q}(\Omega), \int_\Omega \beta(x)|u|^q>0 \right\}<1.$$
	\begin{enumerate}
		\item If $\lambda <0$ then $c_\lambda^+$ is achieved, i.e. \eqref{pqn} has a nonnegative solution $u_+ \in \mathcal{N}_\lambda^+$.
		\item If $\int_\Omega \beta<0$ then $\lambda^*>0$, and \eqref{pqn} has:
		\begin{enumerate}
			\item one nonnnegative solution $u_+ \in \mathcal{N}_\lambda^+$ for $\lambda=0$.
			\item two nonnegative solutions $u_+ \in \mathcal{N}_\lambda^+$ and $u_- \in \mathcal{N}_\lambda^-$ for $0<\lambda<\lambda^*$. Moreover, there exists  $\varepsilon>0$ such that \eqref{pqn} has two nonnegative solutions for $\lambda^* \leq \lambda<\lambda^* +\varepsilon$.
		\end{enumerate}
	\end{enumerate}		
\end{corollary}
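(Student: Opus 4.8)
The plan is to recognise the energy functional of \eqref{pqn} as an instance of \eqref{go}, with $P_1(u)=\int_\Omega\big(|\nabla u|^p+(u^-)^p\big)$, $P_2(u)=\int_\Omega(u^+)^p$, $F(u)=-\int_\Omega\big(|\nabla u|^q-\beta(x)(u^+)^q\big)$, $\gamma=q<p$ and $X=W^{1,p}(\Omega)$, and then to verify the hypotheses (H1), (S), (C1), (C2), (PS) and $\mu_*<\lambda^*$, so that Theorems \ref{existenceouyang}, \ref{existencelambda^*} and \ref{t3} apply. The argument runs parallel to the Dirichlet $(p,q)$-corollary, the only structural change being that in the Neumann setting $\lambda_1(p):=\inf\{\int_\Omega|\nabla u|^p/\int_\Omega|u|^p:u\in W^{1,p}(\Omega)\}=0$, uniquely attained (up to positive multiples) by the constant $\varphi_p$. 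The basic assumptions on $P_1,P_2,F$ and their weak (semi)continuity are checked as in Subsection \ref{Secindefi}, using the compact embedding $W^{1,p}(\Omega)\hookrightarrow L^p(\Omega)$.

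First I would record the preliminary facts. Since $P_1(u)/P_2(u)\ge\int_\Omega|\nabla u^+|^p/\int_\Omega(u^+)^p\ge\lambda_1(p)=0$, we get $\lambda^*\ge\mu_*\ge 0$. As $\beta$ changes sign, $\beta^\pm\not\equiv 0$; the hypothesis on the restricted weighted quotient produces some $u_0\ge 0$ with $F(u_0)>0$, so $\{F>0\}\neq\emptyset$, and in fact $\mu^*>0$ because the only minimizers of $P_1/P_2$ are positive constants, which satisfy $F<0$ when $\int_\Omega\beta<0$. Now $F(\varphi_p)=\varphi_p^{q}\int_\Omega\beta$, so the condition $\int_\Omega\beta<0$ is exactly $F(\varphi_p)<0$; in that case $\varphi_p$ witnesses $\mu_*=0$, and the argument of Subsection \ref{Secindefi}---a minimizer of $P_1/P_2$ over $\{F\ge 0\}$ would be a nonnegative eigenfunction of $-\Delta_p$ with eigenvalue $>\lambda_1(p)=0$, which is impossible since $0$ is the unique principal eigenvalue---gives both (H1) and $\lambda^*>0=\mu_*$.

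The remaining verifications are: (C1)---if $\lambda^*$ is attained at $u$ with $F'(u)=0$ then $\int_\Omega|\nabla u|^{q-2}\nabla u\cdot\nabla\phi=\int_\Omega\beta(u^+)^{q-1}\phi$ for all $\phi\in W^{1,p}(\Omega)$, hence (by density) for all $\phi\in W^{1,q}(\Omega)$; testing with $-u^-$ forces $u\ge 0$, so $u$ is a nonnegative eigenfunction of the weighted problem $-\Delta_q u=\beta u^{q-1}$, which must correspond to the positive principal eigenvalue, i.e.\ $\lambda_1(\beta,q)=1$, contradicting the hypothesis; (C2) (needed only when $\int_\Omega\beta<0$, so that $\lambda^*>0$)---$H_{\lambda^*}'(u)=0$ with $u$ attaining $\lambda^*$ similarly forces $u\ge 0$ and $-\Delta_p u=\lambda^* u^{p-1}$, whence $\lambda^*=\lambda_1(p)=0$, absurd; (S)---for $w_n\rightharpoonup w$ in $W^{1,p}(\Omega)$, compactness gives $w_n\to w$ in $L^p$, so $\int_\Omega(w_n^-)^p\to\int_\Omega(w^-)^p$ and therefore $\int_\Omega|\nabla w_n|^p\to\int_\Omega|\nabla w|^p$, and uniform convexity of $L^p(\Omega;\mathbb{R}^N)$ upgrades $\nabla w_n\rightharpoonup\nabla w$ to strong convergence; (PS) for $\lambda\neq 0$---by the $(\mathrm{S}_+)$ property of $-\Delta_p$ it suffices to bound a (PS) sequence $(u_n)$, and if $\|u_n\|\to\infty$ then $v_n:=u_n/\|u_n\|\rightharpoonup v$, dividing $\Phi_\lambda'(u_n)\phi$ by $\|u_n\|^{p-1}$ and using $q<p$ gives $H_\lambda'(v_n)\to 0$, hence $v_n\to v\neq 0$ with $v\ge 0$ and $-\Delta_p v=\lambda v^{p-1}$, forcing $\lambda=\lambda_1(p)=0$. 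I expect (C1)---specifically the step that upgrades the $W^{1,p}$-weak identity $F'(u)=0$ to the assertion that $1$ is the positive principal eigenvalue of the $\beta$-weighted $q$-Laplacian---to be the most delicate point.

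With these in hand the conclusions follow. Assertion (1): for $\lambda<0\le\lambda^*$ one has $\mathcal{N}_\lambda^0=\emptyset$ and $\mathcal{N}_\lambda^+\neq\emptyset$, while $H_\lambda(u)=\int_\Omega|\nabla u|^p+(u^-)^p+|\lambda|(u^+)^p\ge c\|u\|^p$; hence Theorem \ref{existenceouyang}(2) (whose proof for $\lambda<0$ only uses this coercivity of $H_\lambda$, so that (H1) is not needed) yields $u_+\in\mathcal{N}_\lambda^+$ with $\Phi_\lambda(u_+)=c_\lambda^+<0$. Assertion (2): when $\int_\Omega\beta<0$ we have $\lambda^*>0=\mu_*$ and all of (H1),(S),(C1),(C2),(PS) hold, so Theorem \ref{existenceouyang}(2) gives $u_+\in\mathcal{N}_\lambda^+$ for $0\le\lambda<\lambda^*$, together with $u_-\in\mathcal{N}_\lambda^-$ once $\mu_*=0<\lambda$; Theorem \ref{existencelambda^*} provides at $\lambda=\lambda^*$ a critical point in $\mathcal{N}_{\lambda^*}^0$ (zero energy) and a second one in $\mathcal{N}_{\lambda^*}^+$ (negative energy); and Theorem \ref{t3} yields, for $\lambda^*<\lambda<\lambda^*+\varepsilon$, a local minimizer in $\mathcal{N}_\lambda^+$ and a mountain-pass critical point (here $c_\lambda^+=-\infty$ is established as in the general scheme and (PS) applies since $\lambda>\lambda^*>0$). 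As critical points of $\Phi_\lambda$ are nonnegative weak solutions of \eqref{pqn}, this gives the stated solutions.
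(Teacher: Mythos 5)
Your proposal is correct and follows essentially the same route as the paper, which treats this corollary by transplanting the verifications of (H1), (S), (C1), (C2), (PS) and $\mu_*<\lambda^*$ from Subsections \ref{Secindefi} and \ref{pqlapl} to the Neumann setting, using precisely the observations you make ($\lambda_1(p)=0$ attained by constants, $F(\varphi_p)=\varphi_p^q\int_\Omega\beta$, so the sign condition becomes $\int_\Omega\beta<0$) before invoking Theorems \ref{existenceouyang}, \ref{existencelambda^*} and \ref{t3}. Your explicit remark that part (1) does not need (H1) because $H_\lambda$ is directly coercive for $\lambda<0$ is consistent with the paper's statement, which likewise omits the hypothesis $\int_\Omega\beta<0$ in that item.
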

\begin{remark}\label{unboundedexamples} \rm As mentioned in Remark \ref{unbogamma<p}, we have $c_\lambda^+=-\infty$ if $\gamma<p$ and $c_\lambda^-=0$ if $\gamma>p$ and $\lambda>\lambda^*$, for the functional of Section 3.1,  and $c_\lambda^+=-\infty$ if $\lambda>\lambda^*$, for the functional of Section 3.2. Indeed, in the first case one may argue as in the proof of \cite[Lemma 4]{B} to show that $c_\lambda^+=-\infty$ for $\lambda>\lambda^*$ if $\gamma<p$ . To this end, it suffices to prove that if $\lambda>\lambda^*$, then there exists $u$ such that $H_\lambda(w)<0$ and $F(w)>0$. We choose $u\neq 0$ such that $H_{\lambda^*}(u)=F(u)=0$. It is clear that $H_\lambda(u)<0$. Choose $r>0$ such that $H_\lambda(w)<0$ for all $w\in B$, where $B$ is the ball centered at $u$, with radius $r$. If $F(w)\le 0$ for all $w\in B$, then $u$ is a local maximizer of $F$, so that $F'(u)=0$, which contradicts (C1). Therefore there exists $w$ such that $H_\lambda(w)<0$ and $F(w)>0$ which completes the proof.  
	For the functional of Section 3.2 we refer to \cite[Theorem 2.5(ii)]{BT2}.
	
\end{remark}
\medskip
\subsection{Kirchhoff type problems}
We consider now the functional 
\begin{equation}\label{fkir}
	\Phi_\lambda(u)=\frac{1}{2}\int_\Omega \left(a|\nabla u|^2- \lambda(u^+)^2 \right)+\frac{b}{4}\left(\int_\Omega |\nabla u|^2\right)^2- \frac{1}{4}\int_\Omega\beta(x)(u^+)^4  \quad u \in W_0^{1,2}(\Omega),
\end{equation}
whose critical points are nonnegative solutions of the Kirchhoff type  problem
\begin{equation} \label{kir}
	-\left(a+b\int_\Omega |\nabla u|^2\right) \Delta u = \lambda |u|^{p-2}u+\beta(x) |u|^{q-2}u, \quad u \in H_0^1(\Omega).
\end{equation}
Here $a,b>0$, $\lambda \in \mathbb{R}$ and $\beta \in L^{\infty}(\Omega)$ is nonnegative and nontrivial, and $N \leq 3$. This problem, with $\beta$ constant, has been recently investigated in \cite{CO,SS}.

Here $H_\lambda(u)=\int_\Omega \left( a|\nabla u|^2-\lambda (u^+)^2\right)$, whereas 
$F(u)=-b\left(\int_\Omega |\nabla u|^2\right)^2+\int_\Omega\beta(x)(u^+)^4 $,
so that
\begin{equation*}
	\lambda^*=\lambda^*(\beta):=\inf\left\{\frac{\int_\Omega |\nabla u|^2}{\int_\Omega (u^+)^2}: u \in H_0^1(\Omega) \setminus \{0\}, b\left(\int_\Omega |\nabla u|^2\right)^2-\int_\Omega\beta(x)(u^+)^4 =0 \right\}.
\end{equation*}
We still have $\lambda^* \ge \lambda_1:=\lambda_1(2)$, and the inequality holds if, and only if, $F(\varphi_1)<0$,  i.e.
$b\left(\int_\Omega |\nabla\varphi_1|^2\right)^2-\int_\Omega\beta(x)(\varphi_1^+)^4 >0$, where $\varphi_1:=\varphi_1(2)$. 
Arguing as in the previous subsection, we can show that this condition implies (H1) and (C2).

To check (C1), assume by contradiction that $\lambda^*$ is achieved by some $u$ such that $F'(u)=0$. It follows that
$b\int_\Omega |\nabla u|^2\int_\Omega \nabla u \nabla \phi=\int_\Omega \beta(x)(u^+)^3\phi$ for every $\phi \in X$, which yields $u \ge 0$. Therefore $\lambda_1(\beta)=1$, where
$$\lambda_1(\beta):=\inf\left\{\frac{b\left(\int_\Omega |\nabla u|^2\right)^2}{\int_\Omega \beta(x)|u|^4}: u \in H_0^1(\Omega)  \right\}.$$ Thus (C1) holds if $\lambda_1(\beta) \ne 1$. Note also that $F$ takes positive values if, and only if, $\lambda_1(\beta)<1$. Finally, proceeding as in the previous subsection (in the case $\gamma<p$), one can show that (PS) holds for any $\lambda\ne \lambda_1$ if $F(\varphi_1)<0$.
Summing up, we derive the following result: 

\begin{corollary}
	Let $\lambda \in \mathbb{R}$ and $\beta \in L^{\infty}(\Omega)$ be nonnegative with $\lambda_1(\beta)<1$.
	\begin{enumerate}
		\item If $\lambda <\lambda_1$ then $c_\lambda^+$ is achieved, i.e. \eqref{kir} has a nonnegative solution $u_+ \in \mathcal{N}_\lambda^+$.
		\item If $b\left(\int_\Omega |\nabla\varphi_1|^2\right)^2-\int_\Omega\beta(x)(\varphi_1^+)^4 >0$ then $\lambda^*>\lambda_1$, and \eqref{kir} has:
		\begin{enumerate}
			\item one nonnnegative solution $u_+ \in \mathcal{N}_\lambda^+$ for $\lambda=\lambda_1$.
			\item two nonnegative solutions $u_+ \in \mathcal{N}_\lambda^+$ and $u_- \in \mathcal{N}_\lambda^-$ for $\lambda_1<\lambda<\lambda^*$. Moreover, there exists  $\varepsilon>0$ such that \eqref{kir} has two nonnegative solutions for $\lambda^* \leq \lambda<\lambda^* +\varepsilon$.
		\end{enumerate}
	\end{enumerate}		
\end{corollary}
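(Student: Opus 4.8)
The plan is to read this corollary off Theorems \ref{existenceouyang}, \ref{existencelambda^*} and \ref{t3}, applied to the functional \eqref{fkir} regarded as an instance of the abstract family \eqref{go} with $p=2$, $\gamma=4$, $X=H_0^1(\Omega)$, $P_1(u)=a\int_\Omega|\nabla u|^2$, $P_2(u)=\int_\Omega(u^+)^2$ and $F(u)=-b\left(\int_\Omega|\nabla u|^2\right)^2+\int_\Omega\beta(u^+)^4$. The first step is to certify the standing hypotheses on $(P_1,P_2,F)$: the growth bounds follow from the Poincaré inequality and, for the quartic term, from $H_0^1(\Omega)\hookrightarrow L^4(\Omega)$ (available since $N\le 3$); the weak (semi)continuity properties follow from the compactness of $H_0^1(\Omega)\hookrightarrow L^2(\Omega)$ and $H_0^1(\Omega)\hookrightarrow L^4(\Omega)$ together with the weak lower semicontinuity of the Dirichlet norm; the assumption $\lambda_1(\beta)<1$ is exactly what furnishes a direction with $F>0$; and (S) holds because $P_1(u)=a\|u\|^2$ and $H_0^1(\Omega)$ is uniformly convex. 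The remaining conditions are those already discussed in the paragraphs preceding the statement: $\lambda_1(\beta)\ne 1$ gives (C1), while the extra assumption $F(\varphi_1)<0$ gives (H1), (C2), the strict inequality $\lambda^*>\lambda_1$, and the (PS) condition for every $\lambda\ne\lambda_1$. I would also record the identity $\mu_*=\lambda_1$: the bound $P_1(u)/P_2(u)\ge a\int_\Omega|\nabla u^+|^2/\int_\Omega(u^+)^2\ge a\lambda_1$ shows $\mu_*\ge\lambda_1$, and when $F(\varphi_1)<0$ the function $\varphi_1$ itself lies in $\{F<0\}$ with quotient $a\lambda_1$, so $\mu_*=\lambda_1<\lambda^*$, which is precisely the inequality required in Theorems \ref{existencelambda^*} and \ref{t3}.

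With all this in hand the three regimes are treated in turn. For (1), $\lambda<\lambda_1\le\lambda^*$, so Theorem \ref{existenceouyang} applies and yields the asserted minimizer over the relevant Nehari submanifold. For (2), $F(\varphi_1)<0$ forces $\lambda^*>\lambda_1=\mu_*$, and then: on $[\lambda_1,\lambda^*)$ Theorem \ref{existenceouyang} applies, giving one critical point when $\lambda=\lambda_1=\mu_*$ and a second one (over the other Nehari submanifold) once $\mu_*<\lambda$; at $\lambda=\lambda^*$ Theorem \ref{existencelambda^*} produces a critical point $tu\in\mathcal{N}_{\lambda^*}^0$ with zero energy together with a critical point in $\mathcal{N}_{\lambda^*}^+$, hence two solutions; and on $(\lambda^*,\lambda^*+\varepsilon)$ Theorem \ref{t3} provides a local minimizer in $\mathcal{N}_\lambda^+$ plus a mountain-pass critical point, its (PS) hypothesis holding because $\lambda>\lambda^*>\lambda_1$. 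Finally, any critical point $u$ of $\Phi_\lambda$ satisfies $\left(a+b\int_\Omega|\nabla u|^2\right)(-\Delta u)=\lambda u^++\beta(u^+)^3$ weakly; testing with $u^-$ forces $u\ge 0$, so $u$ solves \eqref{kir}.

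The structural verifications and the bookkeeping with the three theorems are routine. The step I expect to be the main obstacle is the (PS) condition for $\lambda>\lambda_1$, where $H_\lambda$ is no longer positive definite, so boundedness of Palais--Smale sequences cannot be read off a coercivity estimate. Instead one normalizes an unbounded sequence, $v_n=u_n/\|u_n\|\rightharpoonup v$, and from $\Phi_\lambda'(u_n)\phi\to 0$, after dividing by the leading power $\|u_n\|^{\gamma-1}$ and using $\|v_n\|=1$, extracts a limiting equation for $v$ that forces $v\ge 0$, $v\ne 0$; then boundedness of $\Phi_\lambda(u_n)$ combined with $|\Phi_\lambda'(u_n)u_n|\le\varepsilon_n\|u_n\|$ is used to derive a contradiction with $F(\varphi_1)<0$. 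This follows the pattern of Section \ref{pqlapl}, but the nonlocal term $b\int_\Omega|\nabla u|^2$ has to be handled with care. A secondary point to watch is the coefficient $a$ in the comparisons between $\lambda^*$, $\mu_*$ and $\lambda_1$.
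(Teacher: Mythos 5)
Your proposal takes essentially the same route as the paper: the "proof" of this corollary in the text is nothing more than the discussion preceding it (verification of the standing hypotheses together with (H1), (S), (C1), (C2), $\mu_*<\lambda^*$, and (PS)) followed by an appeal to Theorems \ref{existenceouyang}, \ref{existencelambda^*} and \ref{t3}, and that is exactly what you do, including the identification $\mu_*=a\lambda_1$ and the observation that (S) is free because $P_1$ is a multiple of the squared norm.

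One point that you hide behind the phrase "the relevant Nehari submanifold" should be made explicit, because it is where the literal statement and the theorems part ways. Here $\gamma=4>2=p$, so by Proposition \ref{neharinotempty}(1) one has $\mathcal{N}_\lambda^+=\emptyset$ for $\lambda\le\mu_*$, and Theorem \ref{existenceouyang}(1) yields, for $\lambda<\lambda^*$, a minimizer of $c_\lambda^-$ on $\mathcal{N}_\lambda^-$ unconditionally and a minimizer of $c_\lambda^+$ only once $\lambda>\mu_*$. Hence in parts (1) and (2)(a) the solution actually produced lies in $\mathcal{N}_\lambda^-$, not $\mathcal{N}_\lambda^+$: the $\pm$ labels in the displayed corollary were carried over from the $(p,q)$ corollary, where $\gamma=q<p$ and the roles of $\mathcal{N}^\pm$ are interchanged. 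Your argument proves the correct (relabeled) assertion, which is clearly what the paper intends, but as written it does not literally match the statement and a careful write-up should say so rather than leave the submanifold unnamed. The same bookkeeping applies to the constant $a$, which you rightly flag: with $P_1(u)=a\int_\Omega|\nabla u|^2$ the thresholds are $\mu_*=a\lambda_1$ and $\lambda^*\ge a\lambda_1$, so either the quotient defining $\lambda^*$ should carry the factor $a$ or the corollary should read $a\lambda_1$ in place of $\lambda_1$. Finally, your sketch of (PS) for $\lambda>\lambda_1$ is at the same level of detail as the paper's one-line reference to the previous subsection; it is worth noting that along the normalized sequence $v_n=u_n/\|u_n\|$ the nonlocal coefficient $\int_\Omega|\nabla v_n|^2$ equals $1$, which is what makes the passage to the limit in $F'(v_n)\phi$ tractable despite the lack of strong convergence.
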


The previous result extends \cite[Theorem 2]{SS}, which deals with $\beta$ constant. In this case, the condition $\lambda_1(\beta) < 1$ reads as $\beta>b\mu_1$, where $\mu_1$ is defined by
\begin{equation*}
	\mu_1=\inf\left\{\frac{b\left(\int_\Omega |\nabla u|^2\right)^2}{\int_\Omega|u|^4}: u \in H_0^1(\Omega) \right\}.
\end{equation*}
Let us observe that $\mu_1$ is the first eigenvalue
of the problem 
\begin{equation} \label{kir}
-b\left(\int_\Omega |\nabla u|^2\right) \Delta u = \lambda |u|^2 u, \quad u \in H_0^1(\Omega),
\end{equation}
see \cite{D}. Moreover $b\left(\int_\Omega |\nabla\varphi_1|^2\right)^2-\int_\Omega\beta(x)(\varphi_1^+)^4 >0$ becomes now $\beta<\beta_*:=\frac{b\left(\int_\Omega |\nabla \varphi_1|^2\right)^2}{\int_\Omega \varphi_1^4}$.

\medskip

\subsection*{Acknowledgements}
The authors are thankful to V. Bobkov for pointing out an error in a former version of Proposition \ref{coercivi>lambda^*}(1).

\subsection*{Data availability}
Data sharing not applicable to this article as no datasets were generated or analysed during the current study.


\begin{thebibliography}{99}   
	 
\bibitem {A} S. Alama, \textit{Semilinear elliptic equations with sublinear
	indefinite nonlinearities}, Adv. Differential Equations \textbf{4} (1999), 813--842.
	 
\bibitem {AT}S. Alama, G. Tarantello, On semilinear elliptic equations	with indefinite nonlinearities, Calc. Var. Partial Differential Equations	\textbf{1} (1993), 439--475.	

\bibitem{AS} J. C. de Albuquerque, K. Silva, On the extreme value of the Nehari manifold method for a class of Schrödinger equations with indefinite weight functions. J. Differential Equations 269 (2020), no. 7, 5680–5700.

\bibitem{Am} A. Ambrosetti, Critical points and nonlinear variational problems. Mémoires de la S. M. F. 2e série, tome 49 (1992).

\bibitem{BDH} P. A. Binding, P. Drábek, Y. X. Huang, On Neumann boundary value problems for some quasilinear elliptic equations. Electron. J. Differential Equations (1997), No. 05, approx. 11 pp.

\bibitem {BD}I. Birindelli, F. Demengel, Existence of solutions for
semi-linear equations involving the p-Laplacian: the non coercive case. Calc.
Var. Partial Differential Equations \textbf{20} (2004), 343--366.

\bibitem{BT} V. Bobkov, M. Tanaka,   On positive solutions for $(p,q)$-Laplace equations with two parameters. Calculus of Variations and Partial Differential Equations, 54(3), 3277-3301.

\bibitem{BT2} V. Bobkov, M. Tanaka,  Remarks on minimizers for (p,q)-Laplace equations with two parameters.
Communications on Pure and Applied Analysis, 17(3), (2018) 1219-1253.

\bibitem{BT3} V. Bobkov, M. Tanaka,  Multiplicity of positive solutions for $(p,q)$-Laplace equations with two parameters. (2020), to appear in Comm. Contemp. Math.

\bibitem{B} K.J. Brown, The Nehari manifold for a semilinear elliptic
	equation involving a sublinear term. Calc. Var. Partial Differential
	Equations \textbf{22} (2005), 483--494.
	

\bibitem{BZ}	 K.J. Brown, Y. Zhang,  The Nehari manifold for a semilinear elliptic equation with a sign-changing weight function. J. Differential Equations 193 (2003), no. 2, 481–499. 

\bibitem {CT}J. Chabrowski, C.\ Tintarev, An elliptic problem with
	an indefinite nonlinearity and a parameter in the boundary condition, NoDEA
Nonlinear Differ. Equ. Appl.\ \textbf{21} (2014), 519--540.

\bibitem{CO}  B. Chen, Z. Q. Ou, Existence and bifurcation behavior of positive solutions for a class of Kirchhoff-type problems. Comput. Math. Appl. 77(10), 2859–2866 (2019). 

\bibitem{C}  M. Cuesta, Eigenvalue problems for the p-Laplacian with indefinite weights. Electron. J. Differential Equations 2001, No. 33, 9 pp.

\bibitem{CRQ} M. Cuesta, H. Ramos Quoirin,  A weighted eigenvalue problem for the p-Laplacian plus a potential. NoDEA Nonlinear Differential Equations Appl. 16 (2009), no. 4, 469–491.

\bibitem{D} G. Dai,  Eigenvalues, global bifurcation and positive solutions for a class of nonlocal elliptic equations.Topol. Methods Nonlinear Anal. 48(1), 213–233 (2016)

\bibitem{DP} P. Drábek, S. I. Pohozaev, 
Positive solutions for the p-Laplacian: application of the fibering method. 
Proc. Roy. Soc. Edinburgh Sect. A 127 (1997), no. 4, 703–726. 
	
\bibitem{FRQS} G. M. Figueiredo, H. Ramos Quoirin, K. Silva, Ground states of elliptic problems over cones. Calc. Var. Partial Differential Equations 60 (2021), no. 5, Paper No. 189, 29 pp.

\bibitem {I}Y. Il'yasov, On positive solutions of indefinite elliptic
equations, Comptes Rendus de l'Acad\'{e}mie des Sciences-Series I-Mathematics
\textbf{333} (2001), 533--538.
	

\bibitem{IS} Y. Il'yasov, K. Silva, On branches of positive solutions for p-Laplacian problems at the extreme value of the Nehari manifold method, Proc. Amer. Math. Soc. 146 (2018), no. 7, 2925--2935. 


\bibitem {KRQU1}U.\ Kaufmann, H.\ Ramos Quoirin, K.\ Umezu,
\textit{Nonnegative solutions of an indefinite sublinear Robin problem I:
	positivity, exact multiplicity, and existence of a subcontinuum}, Annali di
Matematica 199, 2015–2038 (2020).


\bibitem {KRQU2}U.\ Kaufmann, H.\ Ramos Quoirin, K.\ Umezu,
\textit{Nonnegative solutions of an indefinite sublinear Robin problem II:
	local and global exactness results,} to appear in Israel J. Math. 

\bibitem {KRQU}U.\ Kaufmann, H.\ Ramos Quoirin, K.\ Umezu,
Uniqueness and positivity issues in a quasilinear indefinite problem. Calc. Var. Partial Differential Equations 60 (2021), no. 5, Paper No. 187, 21 pp.	

\bibitem{Ou} T. Ouyang,  On the positive solutions of semilinear equations 
	$\Delta u+\lambda u+hu^p=0$ on compact manifolds. II.  Indiana Univ. Math. J.  40  
(1991),  no. 3, 1083--1141. 

\bibitem{SS} K. Silva, S. M. Sousa,  Finer analysis of the Nehari set associated to a class of Kirchhoff-type equations. SN Partial Differ. Equ. Appl. 1, 43 (2020).

\bibitem{SW} A. Szulkin, T. Weth, The method of Nehari manifold. Handbook of nonconvex analysis and applications, 597–632, Int. Press, Somerville, MA, (2010).

\end{thebibliography}
\end{document}